\renewcommand{\MR}[1]{}
\newtheorem{teo}{Theorem}[section]
\newtheorem{prop}[teo]{Proposition}
\newtheorem{lemma}[teo]{Lemma}
\newtheorem{re}[teo]{Remark}
\newtheorem{de}[teo]{Definition}
\newtheorem{cor}[teo]{Corollary}
\numberwithin{equation}{section}
\newcommand{\Real}{\mathbb R}
\author{\textsc{Luccas Campos} and \textsc{Ademir Pastor}}
\renewcommand{\Re}{\operatorname{Re}}
\renewcommand{\Im}{\operatorname{Im}}
\patchcmd{\l@section}{1.0em}{0.4em}{}{}
\theoremstyle:=theorem,de,re,pro,lem,coro,plain\do{%
     \expandafter\g@addto@macro\csname th@\theoremstyle\endcsname{%
        \addtolength\thm@preskip\parskip
     }%
   }
\let\oldphi\phi 
\let\phi\varphi 
\let\varphi\oldphi
\begin{document}
\scrollmode
\title{Threshold solutions for cubic Schr\"odinger systems}
\date{}

\keywords{nonlinear Schrödinger systems, mass-energy threshold, asymptotic behavior, scattering, blow-up.}

\subjclass[2020]{35Q55, 35P25, 35P30, 35B40}


\begin{abstract}\noindent
We consider the following Scr\"odinger system
\begin{equation}
	\begin{cases}
	    i\partial_t u + \Delta u +(|u|^2+\beta |v|^2) u= 0,\\
	    i\partial_t v + \Delta v +(|v|^2+\beta |u|^2) v = 0,\\
	\end{cases}
\end{equation}
with initial data $(u_0,v_0) \in H^1(\Real ^3)\times H^1(\Real^3)$ at the so-called \textit{mass-energy threshold}, i.e., such that 
$M(u_0,v_0)E(u_0,v_0) = M(\phi,\psi)E(\phi,\psi)$, where $(\phi,\psi)$ is a ground state. For a suitable range of values of $\beta>0$, we show the existence of special solutions to this system, which converge to a standing wave solution in one time direction, and either blows up or scatters in the opposite direction. Moreover, we classify general solutions at the ground state, showing a rigidity result regarding the possible long-time behaviors that might occur. Our results do not rely on the uniqueness of the corresponding ground state: indeed, the main results hold even in the case where the Weinstein functional is known to have more than one optimizer. 

\end{abstract}
    \vspace{-.4cm}

    \maketitle
\tableofcontents




\section{Introduction}

We consider the following nonlinear Schrödinger (NLS) system  of equations 
\begin{equation}\label{sys_NLS}
	\begin{cases}
	    i\partial_t u + \Delta u +(|u|^2+\beta |v|^2) u= 0,\\
	    i\partial_t v + \Delta v +(|v|^2+\beta |u|^2) v = 0,\\
	\end{cases}
\end{equation}
where $u$ and $v$ are complex-valued functions on the variables $(x,t) \in \mathbb{R}^{3} \times \mathbb{R}$, and $\beta>0$ is a coupling constant. From the physical point of view, system \eqref{sys_NLS} appears, for example, in nonlinear optics when higher-order nonlinear effects are included such as stimulated Raman scattering (SRS) and
stimulated Brillouin scattering (SBS).
For instance, if the
peak power of the incident pulse is above a threshold level then both SRS and SBS
may transfer energy from the pulse to a new pulse. The two pulses interact with each other through the Raman or Brillouin gain. A similar situation occurs when two
or more pulses at different wavelengths are incident on the fiber (see \cite{agrawalbook}). The interested reader will find a wide discussion concerning nonlinear optics, for instance, in \cite{agrawalbook} or \cite{boydbook}.

The initial value problem associated with \eqref{sys_NLS} may be studied following the same strategies for the standard cubic NLS equation,
\begin{equation}\label{standardNLS}
    i\partial_t u+\Delta u +|u|^2u=0.
\end{equation}
In particular local well-posedness in the usual $L^2$-based Sobolev spaces is well understood. We are interested in considering \eqref{sys_NLS} mainly in the inhomogeneous Sobolev space $H^1(\Real^3)\times H^1(\Real^3)$. Therefore, given an initial data $(u_0,v_0)\in H^1(\Real^3)\times H^1(\Real^3)$, there exist $T_*=T_*(u_0,v_0)\in(0,\infty]$, $T^*=T^*(u_0,v_0)\in(0,\infty]$ and a unique solution of \eqref{sys_NLS} satisfying $(u,v)\in C((-T_*,T^*):H^1(\Real^3)\times H^1(\Real^3))$. In addition, the following blow-up alternative holds: if $T^*<\infty$ then
\[
\lim_{t\to T^*}\|(u(t),v(t))\|_{H^1\times H^1}=\infty.
\]
In this case we say that solution \textit{blows-up in finite positive time}. Similarly, if $T_*<\infty$ then
\[
\lim_{t\to -T_*}\|(u(t),v(t))\|_{H^1\times H^1}=\infty
\]
and we say that the solution \textit{blows-up in finite negative time}. We refer the reader to \cite{Cazenave03}*{Chapter 4} or \cite{LiPo15}*{Chapter 5} for the proofs in the case of the scalar Schr\"odinger equation; but a very similar analysis also establishes the results for system \eqref{sys_NLS}. If both $T_*=\infty$ and $T^*=\infty$ we say that the solution is \textit{global} and \eqref{sys_NLS} is said to be \textit{globally well-posed}.

Note that \eqref{sys_NLS} is invariant by scaling. Namely, if $(u,v)$ is a solution with initial data $(u_0,v_0)$, then \begin{equation}
    (u_{\delta}(x,t),v_{\delta}(x,t)) =  (\delta u(\delta x, \delta^2 t),\delta v(\delta x, \delta^2 t)), \qquad \delta > 0,
\end{equation}
is also a solution with initial data $(u_\delta(0),v_\delta(0))=(\delta u_0(\delta x),\delta v_0(\delta x))$. Computing the homogeneous $\dot{H}^s(\Real^3)$ norm yields
\begin{equation}
    \| (u_{\delta}(0),v_{\delta}(0))\|_{\dot{H}^s\times \dot{H}^s} = \delta^{s-
   1/2}\|(u_0,v_0)\|_{\dot{H}^s\times \dot{H}^s}.
\end{equation}
Hence, the scale-invariant Sobolev space is $\dot{H}^{\frac{1}{2}}(\Real^3)\times \dot{H}^{\frac{1}{2}}(\Real^3)$. Such a space is sometimes referred to as the critical Sobolev space. In particular we see that the critical regularity is below that of our target space $H^1(\Real^3)\times H^1(\Real^3)$. 

Consequently, in order to prove a global well-posedness result it suffices to guarantee, as usual, an a priori bound of the solution in hand. As for the scalar NLS equation, such a priori bound may be obtained with the help of the available conservation laws. Here we recall that  system \eqref{sys_NLS} conserves the \textit{energy}
\begin{equation}
    E(u(t),v(t)) := \frac{1}{2} \int \left[ |\nabla u(t)|^2 +|\nabla v(t)|^2\right] - 
    \frac{1}{4}\int \left[|u|^4+2\beta|uv|^2+|v|^4\right]
\end{equation}
the \textit{mass}
\begin{equation}
    M(u(t),v(t)) := \int\left[ |u(t)|^2 +  |v(t)|^2 \right]
\end{equation}
and the \textit{momentum}
\begin{equation}
    F(u(t),v(t)) := \Im\int \left[ \bar{u}(t)\nabla u(t)+\bar{v}(t)\nabla v(t)\right].
\end{equation}

The dicothomy  global well-posedness versus blow-up in finite time was studied in \cite{Pastor}. To give a precise description of the results, recall that a \textit{standing wave} for \eqref{sys_NLS} is a solution of the form
\begin{equation}\label{standing}
    {u(x,t) =  (e^{i t}\phi(x),e^{i t} \psi(x)) },
\end{equation}
where $\phi$ and $\psi$ are real-valued functions in $H^1(\Real^3)$. By substituting \eqref{standing} in \eqref{sys_NLS} we see that $\phi$ and $\psi$ must be solutions of the following nonlinear elliptic system
\begin{align}\label{sys_def_ground}
\begin{cases}
        \Delta \phi - \phi +(|\phi|^2+\beta |\psi|^2)\phi=0,\\
        \Delta \psi - \psi +(|\psi|^2+\beta |\phi|^2)\psi = 0.
\end{cases}
\end{align}
A (weak) solution of \eqref{sys_def_ground} is also called  a \textit{bound state}. It is easy to see that bound states are critical points of the functional 
$$
I(\varphi,\psi)=E(\varphi,\psi)+\frac{1}{2}M(\varphi,\psi).
$$
Among all bound states, the minimizers of $I$ play a fundamental role in the dynamics of \eqref{sys_NLS}. These particular solutions are called \textit{ground states}. We will denote the set of all ground states by $\mathcal{G}$.

 Take any $(\phi,\psi) \in \mathcal{G}$ and introduce the quantities 
\begin{equation}\label{MKdef}
    \mathcal{MK}(f,g):=\dfrac{M(f,g)K(f,g)}{M(\phi,\psi)K(\phi,\psi)} \quad \mbox{and} \quad \mathcal{ME}(f,g):=\dfrac{M(f,g)E(f,g)}{M(\phi,\psi)E(\phi,\psi)},
\end{equation}
where here and throughout the whole paper, 
$$
K(f,g)=\int \left[ |\nabla f|^2 +|\nabla g|^2\right].
$$
By using \eqref{sys_def_ground} and integration by parts one can see that $M(\phi,\psi) = M(\tilde{\phi},\tilde{\psi})$, $K(\phi,\psi) = K(\tilde{\phi},\tilde{\psi})$, and $E(\phi,\psi) = E(\tilde{\phi},\tilde{\psi})$ for all $(\phi,\psi), (\tilde{\phi},\tilde{\psi}) \in \mathcal{G}$. In particular the denominators in \eqref{MKdef} 
do not depend on the choice of the ground state.

The results established in \cite{Pastor} may be summarized as follows: assume $(u_0,v_0)\in H^1(\Real^3)\times H^1(\Real^3)$ and 
\begin{equation}\label{MEcond}
    \mathcal{ME}(u_0,v_0)<1.
\end{equation}
If $\mathcal{MK}(u_0,v_0)<1$ then the corresponding solution is global. Also, if $\mathcal{MK}(u_0,v_0)>1$ and $u_0,v_0$ are radial functions (or satisfy $|x|(u_0,v_0)\in L^2(\Real^3)\times L^2(\Real^3))$ then the corresponding solution blows-up in negative and positive finite time. These results were complemented in \cite{FP}, where the authors proved that in the case of a global solution it also scatters forward and backward in time. Recall we say that a global solution $(u(t),v(t))$ of \eqref{sys_NLS} scatters forward in time if there exist $u_0^+,v_0^+\in H^1(\Real^3)$ such that 
\begin{equation}\label{defscat}
\lim_{t\to+\infty}\|(u(t),v(t))-(e^{it\Delta}u_0^+,e^{it\Delta}v_0^+)\|_{H^1\times H^1}=0.
\end{equation}
Also, we say that  $(u(t),v(t))$ scatters backward in time if there exist $u_0^-,v_0^-\in H^1(\Real^3)$ such that 
\begin{equation}\label{defscat-}
\lim_{t\to-\infty}\|(u(t),v(t))-(e^{it\Delta}u_0^-,e^{it\Delta}v_0^-)\|_{H^1\times H^1}=0.
\end{equation}
Here, $e^{it\Delta}f$ stands for the free solution of the scalar Schr\"odinger equation with initial data $f$.

If $(\phi, \psi)$ solves \eqref{sys_def_ground}, then the {standing wave} given in \eqref{standing}
 neither blows up in finite time, nor scatters, in any time direction. Hence, it seems that the assumption \eqref{MEcond} is crucial to obtain the above mentioned results. In the present paper we are interested in the dynamics of \eqref{sys_NLS} exactly at the threshold, that is, in the case $\mathcal{ME}(u_0,v_0)=1$. 
 
 Before stating our results, let us discuss a little more concerning the ground states of \eqref{sys_def_ground}. It is immediate to check that the pair $((1+\beta)^{-1/2}\,\varphi,(1+\beta)^{-1/2}\,\varphi)$ solves system \eqref{sys_def_ground} if $\varphi$ is a (weak) solution of the scalar equation
\begin{equation}\label{scalar_ground}
    \Delta \varphi-\varphi+
    |\varphi|^2\varphi=0.
\end{equation}
Such equation is related to the scalar NLS equation \eqref{standardNLS} and it is well known (c.f. \cite{Strauss77}, \cite{BLP81}, \cite{Kwong89},\cite{TaoBook}*{Appendix B}) that it admits a unique radial positive $H^1$ solution (the ground state), which is smooth and decays exponentially. Besides the \textit{trivial solution} $(\phi,\psi) = (0,0)$, system \eqref{sys_def_ground} also exhibits \textit{semi-trivial} solutions of the form $(\varphi,0)$ and $(0,\varphi)$, where $\varphi$ solves the scalar equation \eqref{scalar_ground}. 

Therefore, differently from the scalar case, ground states are not necessarily unique up to symmetries. Indeed, the results given in \cite{Mande115}*{Lemma 1 and Lemma 2}, \cite{Maia}*{Corollary 2.6} and \cite{WY_Uniq}*{Theorem 1.3} (see also \cite{STH16} and \cite{Correia16}) show that if $\beta>1$, then there is a unique ground state (up to phase, scaling and translation), and it is given by the fully nontrivial solution $((1+\beta)^{-1/2}\,\varphi,(1+\beta)^{-1/2}\,\varphi)$, where $\varphi$ is the radial positive solution of \eqref{scalar_ground}. On the other hand, if the coupling is weak, i.e., if $0 < \beta < 1$, then the ground states are the semi-trivial solutions $(\varphi,0)$ and $(0,\varphi)$. In view of this, throughout the paper we will assume
\begin{equation}\label{betarange}
    \text{either } 0 < \beta < 1 \text{ or } \beta > 1.
\end{equation}

The case $\beta=1$ is degenerated, as there appear infinitely many radial non-negative solutions given by $(\lambda\, \varphi, \sqrt{1-\lambda^2}\, \varphi)$, for any $0 \leq \lambda \leq 1$.

Since there can be more than one ground state, the main challenge here, which has no analogue in the scalar case, is to understand the possible behaviors this situation leads to. The variational characterization of the ground states must take the lack of uniqueness into account, and the approach has to be changed, given the uniqueness assumptions were heavily used in several works (c.f. \cites{DM_Dyn,DR_Thre,higher_thre,CFR_thre}).




Our first result shows that the dynamics of \eqref{sys_NLS} at the threshold is even richer.

\begin{teo}[Special solutions]\label{sub_special}
Assume $\beta$ satisfies \eqref{betarange}. For each ground state $Q = (\phi,\psi) \in \mathcal{G}$ there exist two radial solutions $Q^+=(\phi^+,\psi^+)$ and $Q^-=(\phi^-,\psi^-)$ to \eqref{sys_NLS} in $H^1(\Real^3)\times H^1(\Real^3)$ such that
\begin{itemize}
\item[(i)] $M(Q^\pm) = M(Q)$, $E(Q^\pm) = E(Q)$ , $T^*(Q^\pm)=+\infty$ and there exist $C$, $e_0>0$ such that
\begin{equation}
\|Q^\pm(t) - e^{it}Q\|_{H^1 \times H^1} \leq C e^{-e_0t}\text{ for all } t \geq 0,
\end{equation} 
\item[(ii)] $K(Q^+(0)) >K(Q)$ and $T_*(Q^+)<+\infty$
\item[(iii)] $K(Q^-(0))< K(Q)$, $T_*(Q^-)=+\infty$ 
and $Q^-$  scatters backward in time.
\end{itemize}
\end{teo}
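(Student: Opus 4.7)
The plan is to follow the Duyckaerts--Merle spectral/modulation approach, adapted to the system \eqref{sys_NLS}. Write a candidate threshold solution as $(u,v)(t) = e^{it}(Q + h(t))$ with $h = (h_1,h_2)$ small in $H^1 \times H^1$; substituting into \eqref{sys_NLS} yields
\[
\partial_t h = \mathcal{L}\, h + N(h),
\]
where $\mathcal{L}$ is the linearization of \eqref{sys_NLS} around the standing wave (a matrix Schr\"odinger operator whose potential is built from $\phi$ and $\psi$) and $N$ collects the quadratic and cubic nonlinear remainders. The first step is a spectral analysis of $\mathcal{L}$ on the radial sector: I would establish a simple real eigenvalue $e_0 > 0$ with eigenfunction $\mathcal{Y}^+$ and, by the Hamiltonian structure of $\mathcal{L}$, a mirror eigenvalue $-e_0$ with eigenfunction $\mathcal{Y}^-$, the remainder of the discrete spectrum being purely imaginary and generated by the symmetries (phase, scaling, translation). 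When $\beta>1$ the ground state is of the form $(c\varphi,c\varphi)$ and $\mathcal{L}$ splits along the symmetric/antisymmetric decomposition, so that the symmetric sector reduces, up to rescaling, to the linearization of the scalar cubic NLS on $\mathbb{R}^3$, from which $\pm e_0$ can be imported. When $0<\beta<1$ the ground state is semi-trivial, so $\mathcal{L}$ decouples directly into a scalar NLS linearization plus a linear Schr\"odinger operator with potential $\beta\varphi^2$, the instability again coming from the scalar piece.

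Given $\mathcal{Y}^+$, the second step is to build approximate threshold solutions by power series in a parameter $a\in\mathbb{R}$: set
\[
h^a_k(t) := \sum_{j=1}^k a^j e^{-je_0 t}\,\mathcal{Z}_j,\qquad \mathcal{Z}_1 := \mathcal{Y}^+,
\]
and determine $\mathcal{Z}_j$ for $j\ge 2$ recursively by matching orders in $ae^{-e_0 t}$: each $\mathcal{Z}_j$ solves $(\mathcal{L} - je_0)\mathcal{Z}_j = F_j$, which is solvable in the radial class because $je_0$ is not in the spectrum of $\mathcal{L}$ for $j\ge 2$. A direct computation then shows that the residual
\[
R_k := \partial_t h^a_k - \mathcal{L}\, h^a_k - N(h^a_k)
\]
is $O\bigl(e^{-(k+1)e_0 t}\bigr)$ in $H^1\times H^1$ for $t\ge 0$.

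The third step is to promote the approximate solutions to exact ones. I would solve \eqref{sys_NLS} from the data $e^{iT_n}(Q + h^a_k(T_n))$ at times $T_n \to +\infty$ and propagate backward, producing a sequence $U_n$. A standard bootstrap based on linearized Strichartz estimates and Duhamel's formula, exploiting the exponential smallness of $R_k$ for $k$ large, yields the uniform bound
\[
\|U_n(t) - e^{it}(Q + h^a_k(t))\|_{H^1\times H^1} \lesssim e^{-(k+\tfrac12)e_0 t},\qquad t\in [0,T_n].
\]
Compactness and uniqueness then provide a limit $Q^\pm$ satisfying the exponential convergence claimed in (i). Choosing $a>0$ or $a<0$ (equivalently, replacing $\mathcal{Y}^+$ by $\mathcal{Y}^-$) yields two solutions with opposite sign of $K(Q^\pm(0))-K(Q)$; indeed, to leading order this difference equals $a\,\langle K'(Q),\mathcal{Y}^+\rangle$, and $\langle K'(Q),\mathcal{Y}^+\rangle$ is nonzero since otherwise $\mathcal{Y}^+$ would be tangent to a conserved quantity, contradicting that $e_0$ is real.

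Finally, since $\mathcal{ME}(Q^\pm)=1$ is preserved, the threshold classification for \eqref{sys_NLS} established elsewhere in the paper (in the spirit of \cite{Pastor} and \cite{FP}, via Kenig--Merle concentration-compactness together with a localized virial identity) forces any threshold solution with $K > K(Q)$ at some time to blow up in finite time on that side, and any threshold solution with $K < K(Q)$ to scatter on that side. Applied at $t=0$ and read backward in time, this gives (ii) and (iii). The main obstacle I anticipate lies in the spectral step: the non-uniqueness of the ground state, which is precisely the novelty of the paper, prevents the direct application of the scalar variational coercivity underlying the standard analysis of $\mathcal{L}$; one must carefully separate the genuine unstable direction $\mathcal{Y}^+$ from the symmetry kernel and from the symmetric/antisymmetric decoupling, and make the linearized estimates quantitative uniformly across all $(\phi,\psi)\in\mathcal{G}$.
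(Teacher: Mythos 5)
Your roadmap is essentially the paper's: spectral analysis of $\mathcal{L}$ producing the real eigenvalues $\pm e_0$ with eigenfunctions $\mathcal{Y}_\pm$ (Section \ref{Sec3}), the iterative construction of approximate solutions $\sum_j e^{-je_0t}Z_j$ (Proposition \ref{family_appr}), passage to exact solutions $U^A$ decaying exponentially to $e^{it}Q$ (Proposition \ref{sub_exist_UA}), and then the threshold classification of Section \ref{Sec7} (Lemmas \ref{conv_ground_high} and \ref{conv_ground_low}) to obtain backward blow-up for $Q^+$ and backward scattering for $Q^-$. Your two deviations are harmless and arguably instructive: for $\beta>1$ the symmetric/antisymmetric splitting of $\mathcal{L}$ around $((1+\beta)^{-1/2}\varphi,(1+\beta)^{-1/2}\varphi)$ does reduce the unstable direction to the scalar operator $1-\Delta-3\varphi^2$ (the paper instead uses the second variation of the Weinstein functional together with the Wei--Yao non-degeneracy), and your backward-propagation-plus-compactness construction is interchangeable with the paper's contraction mapping in an exponentially weighted Strichartz space.

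The one step whose justification does not survive scrutiny is the sign determination $K(Q^\pm(0))\gtrless K(Q)$. You claim $\langle K'(Q),\mathcal{Y}_+\rangle\neq 0$ ``since otherwise $\mathcal{Y}_+$ would be tangent to a conserved quantity, contradicting that $e_0$ is real.'' But $K$ is not conserved, and for the quantities that \emph{are} conserved the analogous first-order pairing genuinely vanishes: conservation of mass forces $\int (\phi,\psi)\cdot\mathcal{Y}_1=0$, so real instability is perfectly compatible with such tangencies. The correct argument is: by the variational characterization (Proposition \ref{prop_mod_1}), $K(U^A(t))-K(Q)$ never vanishes unless $U^A$ is the standing wave, hence has constant sign, and for large $t$ it equals $2Ae^{-e_0t}\int\nabla Q\cdot\nabla\mathcal{Y}_1+O(e^{-2e_0t})$; the non-vanishing of $\int\nabla Q\cdot\nabla\mathcal{Y}_1=-\int\Delta Q\cdot\mathcal{Y}_1$ then needs a proof, e.g.\ if it vanished, $\mathcal{Y}_1$ would satisfy \eqref{sub_o1}--\eqref{sub_o2}, so the coercivity of Lemma \ref{lem_coerc} would give $(L_R\mathcal{Y}_1,\mathcal{Y}_1)>0$, hence $(\mathcal{Y}_1,\mathcal{Y}_2)>0$ and $(L_I\mathcal{Y}_2,\mathcal{Y}_2)=-e_0(\mathcal{Y}_1,\mathcal{Y}_2)<0$, contradicting $L_I\geq0$. (To be fair, the paper is itself terse on this point, but your stated reason is the one piece of reasoning in the proposal that is actually wrong rather than merely unelaborated.)
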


We then show that, besides the behavior already expected below the threshold, the standing waves and the above special solutions essentially cover all possible orbits:

\begin{teo}[Classification of threshold solutions]\label{sub_class_thresh}
Let $(u,v)$ be a solution to \eqref{sys_NLS} such that $\mathcal{ME}(u_0,v_0) = 1$. Then, the following holds.
\begin{itemize}
    \item[(i)] If $\mathcal{MK}(u_0,v_0) < 1$, then $(u,v)$ is global. Moreover, either $u$ scatters in both time directions, or there exists a ground state $Q=(\phi,\psi)\in \mathcal{G}$ such that $(u,v) = Q^-$  up to the symmetries of the system.

    \item[(ii)] If $\mathcal{MK}(u_0,v_0) = 1$, then there exists a ground state $Q=(\phi,\psi)\in \mathcal{G}$ such that $(u,v) = e^{it}Q$ up to the symmetries of the system.
    
    \item[(iii)] If $\mathcal{MK}(u_0,v_0) > 1$ and $(u_0,v_0)$ is either radial or $(|x|u_0,|x|v_0) \in L^2(\Real^3)\times L^2(\Real^3)$, then either $(u,v)$ blows-up in finite positive and negative time, or there exists a ground state $Q=(\phi,\psi)\in \mathcal{G}$ such that $u = Q^+$ up to the symmetries of the system.
\end{itemize}
\end{teo}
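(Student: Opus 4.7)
The plan is to follow the Kenig--Merle/Duyckaerts--Roudenko paradigm for threshold solutions, carefully adapted to accommodate the possible non-uniqueness of the ground state of \eqref{sys_def_ground}.

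Part (ii) reduces to a variational rigidity statement. The conditions $\mathcal{MK}(u_0,v_0)=\mathcal{ME}(u_0,v_0)=1$ together force $(u_0,v_0)$ to saturate the sharp Gagliardo--Nirenberg-type inequality whose optimizers are precisely the elements of $\mathcal{G}$. A direct variational computation then shows that $(u_0,v_0)$ itself lies in $\mathcal{G}$ modulo the symmetries of the system, and uniqueness of the Cauchy problem turns $(u,v)$ into the corresponding standing wave $e^{it}Q$.

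Parts (i) and (iii) share a common structure. Arguing by contradiction, I would assume $(u,v)$ neither scatters in both time directions (case (i)) nor blows up in both directions (case (iii)), aiming to identify $(u,v)$ with $Q^-$ or $Q^+$, respectively, up to symmetries. The first step is a \emph{precompactness} statement: exploiting radiality or finite variance in case (iii), and a profile decomposition in the spirit of \cite{FP} in case (i), one shows that the forward orbit $\{(u(t),v(t)): t \geq 0\}$ is precompact in $H^1(\Real^3)\times H^1(\Real^3)$ modulo the symmetries of the system. Extracting any sequence $t_n \to +\infty$, the resulting limit is a threshold solution with $\mathcal{MK}=1$; by part (ii), its initial datum must be of the form $e^{i\theta_\infty}Q_\infty(\cdot-x_\infty)$ for some $Q_\infty \in \mathcal{G}$. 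Upgrading this subsequential convergence to an \emph{exponential} rate
\[
\bigl\|(u(t),v(t)) - e^{i\theta(t)}Q(\cdot - x(t))\bigr\|_{H^1\times H^1} \leq C\, e^{-e_0 t},\qquad t \geq 0,
\]
relies on a spectral analysis of the linearization of \eqref{sys_NLS} around $Q$: coercivity of the quadratic part of the mass--energy functional, modulo the kernel associated with the symmetries and the single unstable direction, combined with a modulation argument, produces a differential inequality that self-improves to exponential decay with rate $e_0$ equal to the spectral gap. Finally, an unstable-manifold uniqueness argument, driven by the sign of the projection of $(u,v)-e^{it}Q$ onto the unstable mode, identifies $(u,v)$ with one of the two solutions $Q^\pm$ constructed in Theorem~\ref{sub_special}.

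The main obstacle is the possible non-uniqueness of the limiting ground state: a priori, different subsequences $t_n\to\infty$ might select different $Q_\infty\in\mathcal{G}$, preventing the setup of a modulation against a fixed reference. I plan to resolve this by observing that $\mathcal{G}$, modulo the symmetries of the system, decomposes into well-separated connected components --- a single fully nontrivial orbit for $\beta>1$ and two distinct semi-trivial orbits for $0<\beta<1$ --- so continuity of $t\mapsto(u(t),v(t))$ forces the limit to remain within a single component, and the spectral gap is uniform over each. Only with this structural fact in hand can the Lyapunov machinery of the previous paragraph be executed coherently.
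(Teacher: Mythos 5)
Your architecture is the one the paper follows --- variational rigidity for (ii), convergence to the standing-wave orbit plus the classification of exponentially decaying solutions of the linearized flow for (i) and (iii), and the separation of the ground-state orbits modulo symmetries to fix the modulating ground state on connected time intervals --- but two load-bearing steps are asserted rather than supplied, and one of them would fail as stated.

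First, in case (iii) you claim that radiality or finite variance yields precompactness of the forward orbit modulo symmetries. Above the ground state in kinetic energy there is no concentration-compactness/minimality structure available to produce such compactness, and the paper never establishes (or needs) it there: the mechanism is the localized virial identity $V_R''(t)\le -2\delta(t)$ together with $V_R>0$, which forces either finite-time blow-up or $\int_0^\infty\delta(t)\,dt<\infty$; convergence to the standing wave then comes from Lemma \ref{const_modul}, and only afterwards does one deduce finite variance and run the untruncated virial with the Cauchy--Schwarz-type inequality of Lemma \ref{Banica} to obtain an exponential rate. Second, in both cases you assert that any sequential limit along $t_n\to+\infty$ has $\mathcal{MK}=1$. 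This is equivalent to $\delta(t_n)\to0$ and is precisely the rigidity statement to be proved; compactness alone is consistent with the orbit remaining at a fixed positive distance from the ground-state manifold. In case (i) the paper obtains it quantitatively (Corollary \ref{virial_delta}): $\int_{T_0}^{T_1}\delta\lesssim\bigl(1+\sup_{[T_0,T_1]}|x(t)|\bigr)\bigl(\delta(T_0)+\delta(T_1)\bigr)$, which, combined with $x(t)=o(t)$ from Lemma \ref{compactness_lemma} and a bootstrap showing $\sup_t|x(t)|<\infty$, yields $\int_t^\infty\delta\lesssim e^{-ct}$ by Gronwall. Two smaller points: the rate produced at this stage is some $c>0$, not the spectral gap $e_0$; the rate is upgraded only in the linearized analysis (Lemma \ref{lemma_improv_decay} and Proposition \ref{prop_unique_decay}). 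And to conclude $(u,v)=Q^{\pm}$ rather than $e^{it}Q$ one needs the unstable coefficient $A$ to be nonzero with the correct sign, which follows because the sign of $K(u(t),v(t))-K(Q)$ is conserved by the variational characterization (Proposition \ref{prop_mod_1}) and uniqueness of the flow.
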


In the above theorem, and throughout the whole paper, by \textit{up to the symmetries} we mean up to scaling, space translation, time translation, phase rotation and time reversal. That is, $(\tilde u,\tilde v) = (u,v)$ up to symmetries if  
$$
 (\tilde u, \tilde v) = (e^{i\theta_0}u_\delta\left(x+x_0,t+t_0 \right), e^{i\theta_1}v_\delta\left(x+x_0,t+t_0 \right)) 
 $$
 or
 $$
 (\tilde u, \tilde v) =(e^{i\theta_0}\bar{u}_\delta\left(x+x_0,\-t+t_0 \right),e^{i\theta_1}\bar{v}_\delta\left(x+x_0,\-t+t_0 \right)),
$$
with $(\delta,\theta_0, \theta_1, x_0,t_0) \in \Real_+\times \Real/2\pi\mathbb{Z}\times \Real/2\pi\mathbb{Z}\times  \Real^N \times \Real$. All these symmetries leave the $\dot{H}^{\frac{1}{2}}\times \dot{H}^{\frac{1}{2}}$ norm invariant.

Threshold problems in the scalar case were studied, for instance, in \cites{DM_Dyn, DR_Thre, higher_thre, CFR_thre}. Even though the general idea of threshold behavior for NLS equations can be considered understood, the main challenge to proving the results above is to deal with the possibility of multiple ground states in system \eqref{sys_NLS}. One has to be careful from the very beginning, since the linearized equation depends on the ground state being considered. The variational characterization given by applying the Sobolev-adapted profile decomposition has to take the multiplicity into account, and the modulational stability proved along the manuscript must be strong enough as to prove that not only the corresponding phase and translation parameters converge as $t \to +\infty$, but that the corresponding ground state (which can be different for different time intervals, as there can be no uniqueness) also converge. Non-degeneracy of the ground states then play an important role here, as well as the spectral properties and coercivity of the linear operator in a suitable subspace.

This paper is structured as follows: In Section \ref{Sec2}, we state the notations used throughout the whole text and recall some basic estimates. In Section \ref{Sec3}, we study the spectral properties of the linearized operator associated to the ground states. Section \ref{Sec4} is devoted to a detailed study of the variational characterization of the ground states, which culminates in a modulation theory developed in Section \ref{Sec5}. We then construct special solutions in Section \ref{Sec6} and prove results about the possible dynamics at the threshold in Section \ref{Sec7}, showing that some of them are associated with exponentially time-decaying solutions to the linearized equation. Finally, Section \ref{Sec8} is devoted to studying such decaying solutions and collecting all of the previous results to close the main results of this paper.

\smallskip

{\bf Acknowledgements.} L. C. was financed by grant \#2020/10185-1, São Paulo Research Foundation (FAPESP). A. P. is partially supported by CNPq grant 303762/2019-5 and FAPESP
grant 2019/02512-5.

\section{Notation and preliminaries}\label{Sec2}

In this section we introduce some notations and give some basic estimates.  We use $c$ to denote various constants that may vary line by line. Given two positive numbers $a$ and $b$, the notation $a \lesssim b$ means
that $a \leq cb$, where the constant $c$ is independent of $a$ and $b$. Sometimes we use $a\lesssim_{a_1,\ldots,a_k}b$ to indicate that the implicit constant $c$ depends on the parameters $a_1,\ldots,a_k$. Given a complex number $z$, we use $\Re(z)$ and $\Im(z)$ to denote, respectively, the real and imaginary parts of $z$. To simplify notation,  $\int f$ always mean integration  over all $\Real^3$. By $\partial_{x_k}$ (or $\partial_k$, for short) we mean differentiation with respect to $x_k$.

By $L^p(\Real^3)$, $1\leq p\leq \infty$, we denote the usual Lebesgue space endowed with the standard norm denoted by $\|\cdot\|_{L^p_x}$ or $\|\cdot\|_{L^p}$. Given a function $f=f(x,t)$, the mixed norm $L^q_tL^r_x$  is defined as $\|f\|_{L^q_tL^r_x}=\|\|f\|_{L^r_x}\|_{L^q_t(\Real)}$. By $\|\cdot\|_{L^q_IL^r_x}$ we mean integration in time over the interval $I\subset\Real$ instead of $\Real$. For any $s\in\Real$, the operators $D^s$ and $J^s$ stand, respectively, for the Fourier
multiplier with symbol $|\xi|^s$ and $\langle \xi \rangle^s = (1+|\xi|)^s$.
Consequently, the norm in the $L^2$-based Sobolev spaces $H^s:=H^s(\Real^3)$ and $\dot{H}^s:=\dot{H}^s(\Real^3)$
are given, respectively, by
\begin{equation*}
\|f\|_{H^s}\equiv \|J^sf\|_{L^2_x}=\|\langle \xi
\rangle^s\hat{f}\|_{L^2_{\xi}}, \qquad \|f\|_{\dot{H}^s}\equiv
\|D^sf\|_{L^2_x}=\||\xi|^s\hat{f}\|_{L^2_{\xi}}.
\end{equation*}
The product space $H^s\times H^s$ is equipped with the norm $\|(f,g)\|^2_{H^s\times H^s}\equiv \|f\|^2_{H^s}+\|g\|^2_{H^s}$. In an analogous way we define the norm in $\dot{H}^s\times \dot{H}^s$.  To simplify notation, sometimes we use $(H^s(\Real^3))^k$ for $H^s(\Real^3)\times \ldots\times H^s(\Real^3)$ ($k$ times).

We say that a pair $(q,r)$ is
$\dot{H}^s$ admissible if
$$
\frac{2}{q}+\frac{3}{r}=\frac{3}{2}-s.
$$
In what follows, we set
$$
\|u\|_{S(L^2)}=\sup\left\{\|u\|_{L^q_tL^r_x};\; (q,r)\, {\rm is}\, L^2\; {\rm
admissible},\; 2\leq r\leq6,\; 2\leq q\leq\infty\right\}
$$
and
$$
\|u\|_{S'(L^2)}=\inf\left\{\|u\|_{L^{q'}_tL^{r'}_x};\; (q,r)\,{\rm is}\, L^2\;
{\rm admissible}, \, 2\leq q\leq\infty,\; 2\leq r\leq6\right\}.
$$

If a time interval $I\subset\Real$ is given, we use ${S(L^2;I)}$ and ${S'(L^2;I)}$ to inform
that the integration in time is over $I$. The space $S(L^2)\times S(L^2)$ is endowed with the norm $\|(u,v)\|_{S(L^2)\times S(L^2)}\equiv \|u\|_{S(L^2)}+\|v\|_{S(L^2)}$.

Next, we recall the well-known Strichartz inequalities. For the proofs we refer the reader to \cite{Cazenave03}.

\begin{lemma}\label{strilem}
The following estimates hold:
\begin{itemize}
    \item[(i)] (Linear estimates)
    $$
    \|e^{it\Delta}u_0\|_{S(L^2)}\lesssim\|u_0\|_{L^2}.
    $$
    \item[(ii)] (Inhomogeneous estimates)
    $$
   \left\| \int_0^te^{i(t-t')\Delta}f(\cdot,t')dt'\right\|_{S(L^2)}\lesssim\|f\|_{S'(L^2)}.
   $$
\end{itemize}
\end{lemma}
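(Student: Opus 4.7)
The plan is to prove Lemma \ref{strilem} by the standard Keel--Tao machinery for the free Schrödinger propagator on $\Real^3$, starting from the two classical endpoint bounds for $e^{it\Delta}$ and propagating them to the full range of $L^2$-admissible pairs by a $TT^*$ argument.

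First, I would record the two ingredients. Mass conservation for the linear flow gives $\|e^{it\Delta}u_0\|_{L^2_x}=\|u_0\|_{L^2}$, while the explicit convolution kernel $(4\pi i t)^{-3/2}e^{i|x|^2/(4t)}$ yields the dispersive decay
$$
\|e^{it\Delta}u_0\|_{L^\infty_x}\lesssim |t|^{-3/2}\|u_0\|_{L^1_x}.
$$
Complex interpolation between these two bounds produces the intermediate decay $\|e^{it\Delta}u_0\|_{L^r_x}\lesssim |t|^{-3(1/2-1/r)}\|u_0\|_{L^{r'}_x}$ for every $2\leq r\leq\infty$.

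Second, I would set $Tu_0:=e^{it\Delta}u_0$ and deploy the $TT^*$ method. The formal adjoint satisfies
$$
TT^*F(t)=\int e^{i(t-t')\Delta}F(t')\,dt',
$$
so bounding $TT^*:L^{q'}_tL^{r'}_x\to L^q_tL^r_x$ reduces, after inserting the dispersive estimate inside the time integral, to the one-dimensional Hardy--Littlewood--Sobolev inequality. This works whenever the scaling relation $\frac{2}{q}+\frac{3}{r}=\frac{3}{2}$ holds with $q>2$, and by duality it yields simultaneously the homogeneous bound (i) for all such admissible pairs and the inhomogeneous bound for the corresponding integral on the full line; the truncation to $[0,t]$ in (ii) is then recovered by the Christ--Kiselev lemma, valid because the exponents satisfy $q'<q$ strictly away from the endpoint.

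The genuinely delicate case is the endpoint pair $(q,r)=(2,6)$, which in three dimensions is included in the supremum defining $\|\cdot\|_{S(L^2)}$ and for which HLS breaks down. Here I expect the main obstacle, and would invoke the bilinear atomic argument of Keel--Tao: dyadically decompose in the time separation $|t-t'|$, use the dispersive estimate together with a level-set decomposition in $x$, and sum by a bilinear real interpolation that replaces the failing HLS endpoint. Once the endpoint is established, taking the supremum (respectively infimum) over admissible pairs immediately produces the norms $\|\cdot\|_{S(L^2)}$ and $\|\cdot\|_{S'(L^2)}$, closing both (i) and (ii). Since all of this is textbook material in three dimensions, one could also simply quote \cite{Cazenave03}, as the authors do.
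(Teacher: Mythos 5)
Your proposal is correct and matches the paper's treatment: the authors give no proof and simply refer to \cite{Cazenave03}, which is exactly the standard Keel--Tao route (dispersive estimate plus $TT^*$ and Hardy--Littlewood--Sobolev, with the bilinear interpolation argument for the endpoint $(2,6)$) that you outline. Nothing further is needed.
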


As in the scalar case, the ground states of \eqref{sys_def_ground} may also be obtained as minimizers of a  vectorial Gagliardo-Nirenberg inequality. In fact, let us introduce the functional
\begin{equation}\label{Pdef}
    P(u,v)=\int\left[ |u|^4+2\beta|uv|^2+|v|^4\right].
\end{equation}

The Gagliardo-Nirenberg inequality then reads as follows:

\begin{lemma} \label{gnlemma}
For any $(u,v)\in H^1(\Real^3)\times H^1(\Real^3)$ there holds
\begin{equation}\label{GN}
\begin{split}
P(u,v)\leq c_{GN} M(u,v)^{1/2}K(u,v)^{3/2}.
\end{split}
\end{equation}
In addition, the sharp constant $c_{GN}$ is given by
\begin{equation}\label{sharpc}
c_{GN}=\frac{4}{3M(\phi,\psi)^{1/2}K(\phi,\psi)^{1/2}},
\end{equation}
where $(\phi,\psi)$ is any ground state in $\mathcal{G}$.
\end{lemma}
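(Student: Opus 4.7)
My plan follows Weinstein's classical variational approach. Introduce the Weinstein quotient
$$
J(u,v) := \frac{M(u,v)^{1/2}K(u,v)^{3/2}}{P(u,v)}
$$
on $\{(u,v)\in H^1(\Real^3)\times H^1(\Real^3) : P(u,v) > 0\}$, and set $\mu := \inf J$. By definition of the sharp constant, $c_{GN} = 1/\mu$, so the task reduces to (i) showing $\mu$ is attained by some $(u_*,v_*)$ which, after a suitable two-parameter rescaling leaving $J$ invariant, solves \eqref{sys_def_ground}, and (ii) computing $J$ at any bound state and relating it to ground states in $\mathcal{G}$.

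For (i), I would take a minimizing sequence $(u_n,v_n)$ and replace it by its Schwarz symmetrization: after first passing to $(|u_n|,|v_n|)$ via Kato's inequality, the Pólya--Szegő inequality does not increase $K$ and preserves $M$, while the Hardy--Littlewood rearrangement inequality applied to $\int|u_n|^2|v_n|^2$ does not decrease $P$; hence $J$ does not increase. Using the scale invariance $(u,v)\mapsto(\alpha u(\gamma\,\cdot\,),\alpha v(\gamma\,\cdot\,))$ of $J$, I normalize $M_n = K_n = 1$, so the sequence is bounded in the radial subspace of $H^1\times H^1$ and $P_n \to 1/\mu > 0$; positivity of $\mu$ follows from the scalar Gagliardo--Nirenberg inequality applied to $|u|,|v|$ and Cauchy--Schwarz on the cross term. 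Strauss's compact embedding $H^1_{\rm rad}(\Real^3)\hookrightarrow L^4(\Real^3)$ then provides a weak limit $(u_*,v_*)\neq (0,0)$ with $P(u_*,v_*) = 1/\mu$ and $M(u_*,v_*), K(u_*,v_*)\leq 1$, forcing $J(u_*,v_*) = \mu$. Differentiating $J$ at this minimizer yields the Euler--Lagrange system
$$
-\tfrac{3}{K}\Delta u_* + \tfrac{1}{M}u_* = \tfrac{4}{P}(|u_*|^2+\beta|v_*|^2)u_*,
$$
together with its analogue for $v_*$; choosing $\gamma,\alpha>0$ to clear the coefficients produces $(\tilde u, \tilde v) := (\alpha u_*(\gamma\,\cdot\,),\alpha v_*(\gamma\,\cdot\,))$ solving \eqref{sys_def_ground}, with $J(\tilde u,\tilde v) = \mu$ by scale invariance.

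For (ii), I would apply two standard integral identities to the bound state $(\tilde u,\tilde v)$: testing \eqref{sys_def_ground} against $(\tilde u,\tilde v)$ gives the Nehari identity $K+M=P$, and testing against $(x\cdot\nabla\tilde u, x\cdot\nabla\tilde v)$ gives the Pohozaev identity $\tfrac{1}{2}K + \tfrac{3}{2}M = \tfrac{3}{4}P$; here the two cross terms $\beta\int|\tilde u|^2\tilde v(x\cdot\nabla\tilde v) + \beta\int|\tilde v|^2\tilde u(x\cdot\nabla\tilde u)$ collapse after one integration by parts into $-\tfrac{3\beta}{2}\int|\tilde u|^2|\tilde v|^2$, combining cleanly with the pure-quartic contributions to reconstruct $\tfrac{3}{4}P$. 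Solving the two relations yields $K=3M$ and $P=4M$ at every bound state, so
$$
J(\tilde u,\tilde v) = \frac{M^{1/2}(3M)^{3/2}}{4M} = \tfrac{3}{4}M(\tilde u,\tilde v)^{1/2}K(\tilde u,\tilde v)^{1/2}.
$$
Finally, noting that $I(\tilde u,\tilde v) = E(\tilde u,\tilde v)+\tfrac{1}{2}M(\tilde u,\tilde v) = \tfrac{1}{4}P(\tilde u,\tilde v) = M(\tilde u,\tilde v)$ on any bound state, the ground states in $\mathcal{G}$ are precisely the bound states of minimal mass; hence $M(\phi,\psi)\leq M(\tilde u,\tilde v)$ and therefore $J(\phi,\psi)\leq \mu$. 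Combined with admissibility $J(\phi,\psi)\geq \mu$, this forces $\mu = J(\phi,\psi) = \tfrac{3}{4}M(\phi,\psi)^{1/2}K(\phi,\psi)^{1/2}$, which proves \eqref{sharpc}.

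The main delicate step is the symmetrization: for complex-valued $u,v$ one must first reduce to non-negative functions via Kato's inequality, and then verify through Riesz--Hardy--Littlewood that the full polynomial $P(u,v)$, including the coupling $2\beta\int|u|^2|v|^2$, is non-decreasing under simultaneous Schwarz rearrangement, so that one may legitimately work with radially decreasing minimizing sequences. Once radial compactness and the minimizer-is-a-bound-state identification are in place, the rest is the clean Nehari--Pohozaev computation.
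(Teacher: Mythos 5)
Your argument is correct. The paper does not actually prove this lemma---it only cites Fanelli--Montefusco---so there is no in-text proof to compare against; your Weinstein-type variational scheme (reduction to nonnegative radial decreasing minimizing sequences via the diamagnetic and P\'olya--Szeg\H{o}/Riesz rearrangement inequalities, Strauss compactness in $L^4(\Real^3)$, Euler--Lagrange plus the two-parameter rescaling to land on \eqref{sys_def_ground}, then Nehari and Pohozaev) is the standard route and is exactly what the cited reference carries out. Your step (ii) reproduces the identities $K=3M$, $P=4M$ of \eqref{pohozaev}, and your observation that $I=\tfrac14 P=M$ on every nontrivial bound state---so that $\mathcal{G}$ is the set of minimal-mass nontrivial bound states and the rescaled minimizer of $J$ is itself a ground state---is the right way to get \eqref{sharpc} without assuming uniqueness, which matters here since the paper emphasizes that ground states need not be unique. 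Two small points to make explicit in a write-up: the minimization of $I$ (equivalently of $M$) defining $\mathcal{G}$ must be taken over \emph{nontrivial} bound states, since $(0,0)$ is a bound state with $I=0$; and when the minimizer of $J$ has a vanishing component (as happens for $0<\beta<1$), the Euler--Lagrange equation in that component is the trivial identity $0=0$, which is harmless because the pair still solves the system.
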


For the proof, see \cite{FanelliMontefusco}. Note that, using integration by parts and \eqref{sys_def_ground}, we may obtain
the following {\textit{Pohozaev}}-type identities
 \begin{align}\label{pohozaev}
      K(\phi,\psi)&= 3 M(\phi,\psi) \quad \mbox{and}\quad
     P(\phi,\psi) = 4M(\phi,\psi).
 \end{align}
 In particular, by setting $(u,v)=(\phi,\psi)$ in \eqref{GN} we see that equality holds. Also, using \eqref{pohozaev} it may be easily checked that
\begin{equation}\label{sharpc2}
c_{GN}=\frac{4}{3\sqrt6 M(\phi,\psi)^{1/2}E(\phi,\psi)^{1/2}},
\end{equation}
which gives $c_{GN}$ in terms of the mass and the energy of the ground states.

\section{The linearized equation}\label{Sec3}

In order to prove the main theorems of this paper, we need to carefully study \eqref{sys_NLS} around the ground state.
We identify the pair of complex numbers $(a + bi,c+di)$ with the vector $
(a,c,b,d)^T
$. For a complex-valued function, we write $f_1$ for its real part and $f_2$ for its complex part. We next introduce the following definition. 

\begin{de}\label{defilinearop} For any ground state $Q = (\phi,\psi)$, we define
\begin{align}\label{def_op}
L_R &=\begin{pmatrix}1-\Delta-3\phi^2 -\beta \psi^2& -2\beta\phi\psi 
\\
-2\beta\phi\psi 
& 1-\Delta-3\psi^2-\beta \phi^2 \\
\end{pmatrix},\\
L_I&=\begin{pmatrix}1-\Delta-\phi^2-\beta \psi^2 & 0 \\
0 & 1-\Delta-\psi^2-\beta \phi^2 \\
\end{pmatrix},\\
\mathcal{L} &:= \begin{pmatrix}0 & -L_I \\
L_R & 0 \\
\end{pmatrix},\\
L(h,k) &= \begin{pmatrix}2\phi^2 h + \phi^2 \overline{h}+ \beta \psi^2 h
+\beta \phi \psi k + \beta \phi \psi \overline{k} 
\\
2\psi^2 k + \psi^2 \overline{k}+ \beta \phi^2 k
+\beta \phi \psi h + \beta \phi \psi \overline{h}
\end{pmatrix} \\
R(h,k) &= \begin{pmatrix}2\phi|h|^2+\phi h^2+\beta\phi|k|^2+\beta\psi h k +\beta \psi h \overline{k}+|h|^2h+\beta|k|^2h\\
2\psi |k|^2+\psi k^2+\beta\phi|h|^2+\beta\phi h k +\beta \psi \overline{h} k+|k|^2k+\beta|h|^2k \end{pmatrix}.
\end{align} 
\end{de}

If $(u,v)$ is a solution to \eqref{sys_NLS}, write $(u,v) = (e^{it}(\phi+h),e^{it}(\psi+k))$. Then $w:= (h,k)$ must satisfy
\begin{equation}\label{linearized_eq}
	\partial_t w + \mathcal{L}w  = iR(w),
\end{equation}
or, writing it as a Schrödinger equation,
\begin{equation}\label{linearized_eq_2}
	i\partial_t w + \Delta w -w +L(w) + R(w) = 0. 
\end{equation}

In the next two sections we establish some properties of the operator $\mathcal{L}$.
\subsection{Properties of the linearized operator}
We have, by a direct calculation, for any ground state $Q = (\phi,\psi)$
\begin{align}
    L_I(\phi,\psi) &= 0,\\
    L_R(\partial_k \phi, \partial_k \psi)&= 0,\quad  1 \leq k \leq 3.
\end{align}
This implies
\begin{equation}
 \mathcal{L}(\partial_k\phi,\partial_k \psi)=\mathcal{L}(i\phi,i\psi) = 0,\quad  1 \leq k \leq 3.
\end{equation}

Also, defining $\Lambda f$ as the scaling generator $f +x \cdot \nabla f$, we have
\begin{equation}
    L_R(\Lambda \phi,\Lambda\psi) = -2(\phi,\psi).
\end{equation}



Moreover, we define the bilinear form associated to $i\mathcal{L} = \begin{pmatrix}L_R&0\\0&L_I\end{pmatrix}$,
\begin{align}
	B((f,g),(\tilde{f},\tilde{g})) &:= \frac{1}{2}(L_R(f_1,g_1),(\tilde{f}_1,\tilde{g}_1))_{L^2}+ \frac{1}{2}(L_I(f_2,g_2),(\tilde{f}_2,\tilde{g}_2))_{L^2},
\end{align}
and the \textit{linearized energy},
\begin{align}
\Phi(f,g) &:= B((f,g),(f,g)) 
\end{align}

One can check directly that, for any $(f,g)$, $(\tilde f, \tilde g) \in S(\Real^3)\times S(\Real^3)$,
\begin{equation}\label{prop_B}
	\begin{gathered}
	B((f,g),(\tilde f, \tilde g)) = B((\tilde f,\tilde g),(f,g)),\\
	B(\mathcal{L}(f,g),(\tilde f, \tilde g)) = - B((f,g),\mathcal{L}(\tilde f, \tilde g)),\\
	\quad B((i\phi,i\psi),(f,g)) = 0,\\
	B((\partial_k \phi,\partial_k \psi),(f,g)) = 0, \,1\leq k \leq 3,\\
	 {B((\Lambda \phi,\Lambda \psi),(f,g)) = -\int (\phi f_1}+\psi g_1).\\
	\end{gathered}.
\end{equation}

We prove the following result:
\begin{lemma}[]\label{spectrum_L} For any ground state of \eqref{sys_NLS}, let $\sigma(\mathcal{L})$ be the spectrum of the operator $\mathcal{L}$, defined in $\left(L^2(\Real^3)\right)^4$ with domain $\left(H^2(\Real^3)\right)^4$ and let $\sigma_{ess}(\mathcal{L})$ be its essential spectrum. Then
\begin{equation}
	\sigma_{ess}(\mathcal{L}) = \{i y:\,\, y \in \Real, |y| \geq 1\},\quad \sigma \cap \Real = \{-e_0,0,e_0\} \quad \text{with } e_0 >0. 
\end{equation}
Moreover, $e_0$ and $-e_0$ are simple eigenvalues  of $\mathcal{L}$ with eigenfunctions $
\mathcal{Y}_+
$, $
\mathcal{Y}_-
\in \mathcal{S}(\mathbb{R}^3) \times \mathcal{S}(\mathbb{R}^3)$, respectively. The null space of $\mathcal{L}$ is spanned by $(i\phi,i\psi)$ and $(\partial_k \phi,\partial_k \psi)$, $1 \leq k \leq 3$ in the case $\beta<1$ and by $(i\phi,0)$, $(0,i\psi)$ and $(\partial_k \phi,\partial_k \psi)$, $1 \leq k \leq 3$ in the case $\beta >1$.

\begin{re}\label{re_spectrum}
By Lemma \ref{spectrum_L}, if $
\mathcal{Y}_1
= 
\Re(\mathcal{Y}_+)
$ and $
\mathcal{Y}_2
= 
\Im(\mathcal{Y}_+)
$, then
\begin{equation}
	L_R 
	\mathcal{Y}_1
	= e_0 
	\mathcal{Y}_2
	\quad \text{and } L_I
	\mathcal{Y}_2
	= -e_0 
	\mathcal{Y}_1
	.
\end{equation}
Furthermore, the null space of $L_R$ is spanned by the vectors $(\partial_k \phi,\partial_k \psi)$, $1 \leq k \leq 3$, and the null space of $L_I$ is spanned by $(\phi,\psi)$ in the case $\beta<1$ and by $(\phi,0)$ and $(0,\psi)$ in the case $\beta > 1$. Moreover, we have
\begin{equation}
    \Phi(\mathcal{Y}_+) = \Phi(\mathcal{Y}_-) = 0.\\
\end{equation}
\end{re}
\end{lemma}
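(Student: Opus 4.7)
The plan is to split the proof into three essentially independent steps: the essential spectrum via Weyl's theorem, the kernel of $\mathcal{L}$ via direct analysis of $L_R$ and $L_I$, and the pair $\pm e_0$ via a Hamiltonian-type factorization of $\mathcal{L}^2$.

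For the essential spectrum I would write $\mathcal{L}=\mathcal{L}_0+\mathcal{V}$, where
\[
\mathcal{L}_0 = \begin{pmatrix} 0 & -(1-\Delta)I_2 \\ (1-\Delta)I_2 & 0\end{pmatrix}
\]
and $\mathcal{V}$ is the matrix-multiplication operator whose entries are quadratic in $\phi,\psi$. Since bound states of \eqref{sys_def_ground} are smooth and exponentially decaying, $\mathcal{V}$ is relatively compact with respect to $\mathcal{L}_0$. A Fourier-side diagonalization shows the symbol of $\mathcal{L}_0$ has eigenvalues $\pm i(1+|\xi|^2)$, so $\sigma(\mathcal{L}_0)=\sigma_{\mathrm{ess}}(\mathcal{L}_0)=\{iy:|y|\geq 1\}$, and Weyl's theorem transfers this identity to $\mathcal{L}$; in particular every real point of $\sigma(\mathcal{L})$ is an isolated eigenvalue of finite multiplicity.

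For the kernel, $\mathcal{L}(f_1,g_1,f_2,g_2)^T=0$ forces $L_R(f_1,g_1)=0$ and $L_I(f_2,g_2)=0$, so $\ker\mathcal{L}=\ker L_R\oplus\ker L_I$ under the real/imaginary splitting. If $\beta>1$ then $(\phi,\psi)=\kappa(\varphi,\varphi)$ with $\kappa=(1+\beta)^{-1/2}$, and the rotation $(f,g)\mapsto((f+g)/\sqrt{2},(f-g)/\sqrt{2})$ block-diagonalizes $L_R$ into the scalar operators $L_+:=1-\Delta-3\varphi^2$ (symmetric direction) and $L_{\mathrm{asym}}:=1-\Delta-\tfrac{3-\beta}{1+\beta}\varphi^2$ (antisymmetric direction), while $L_I=(1-\Delta-\varphi^2)I_2$. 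Classical non-degeneracy of the scalar ground state (\cite{Kwong89}) gives $\ker L_+=\mathrm{span}\{\partial_k\varphi\}$, $L_+$ with Morse index one, and $\ker L_-=\mathrm{span}\{\varphi\}$ for $L_-:=1-\Delta-\varphi^2\ge 0$. Since $\tfrac{3-\beta}{1+\beta}<1$ for $\beta>1$, the comparison $L_{\mathrm{asym}}\geq L_-$ combined with a direct test of the residual term against $\varphi$ forces $L_{\mathrm{asym}}>0$; undoing the rotation produces the kernels claimed in the remark. If $\beta<1$ and $(\phi,\psi)=(\varphi,0)$ up to symmetry, then $L_R,L_I$ are already block-diagonal and an identical argument shows $1-\Delta-\beta\varphi^2>0$, completing the kernel count.

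For the pair $\pm e_0$, the factorization $\mathcal{L}^2=-\mathrm{diag}(L_IL_R,\,L_RL_I)$ implies $\lambda\in\Real\setminus\{0\}$ is an eigenvalue of $\mathcal{L}$ iff $-\lambda^2$ is a negative eigenvalue of $L_IL_R$. Restricting to $(\ker L_I)^\perp$, where $L_I$ is strictly positive, the operator $L_I^{1/2}L_RL_I^{1/2}$ is self-adjoint with the same nonzero spectrum as $L_IL_R$, and by Sylvester's law of inertia its Morse index equals that of $L_R$ on $(\ker L_I)^\perp$. By the previous step $L_R$ has global Morse index one, and since $L_+\varphi=-2\varphi^3$ is not proportional to $\varphi$ the positive Perron-Frobenius ground eigenfunction $\chi_-$ of $L_+$ is not a scalar multiple of $\varphi$; hence the $L_R$-negative direction has nonzero projection onto $(\ker L_I)^\perp$, yielding exactly one simple negative eigenvalue $-e_0^2$ of $L_IL_R$ and hence the simple real pair $\pm e_0$ for $\mathcal{L}$. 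Schwartz regularity of $\mathcal{Y}_\pm$ then follows from standard elliptic bootstrapping together with Agmon-type exponential decay estimates, since $\pm e_0$ lie in the resolvent set of the free operator. Finally, the identity $\Phi(\mathcal{Y}_\pm)=0$ asserted in the remark is immediate from the symplectic relations \eqref{prop_B}: $e_0 B(\mathcal{Y}_+,\mathcal{Y}_+)=B(\mathcal{L}\mathcal{Y}_+,\mathcal{Y}_+)=-B(\mathcal{Y}_+,\mathcal{L}\mathcal{Y}_+)=-e_0 B(\mathcal{Y}_+,\mathcal{Y}_+)$, so $B(\mathcal{Y}_+,\mathcal{Y}_+)=0$, and the argument for $\mathcal{Y}_-$ is identical. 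The main technical obstacle I anticipate is the Morse-index accounting for $L_IL_R$ when both $L_R$ and $L_I$ have nontrivial kernels, which requires explicitly tracking that the $L_R$-negative direction survives the projection onto $(\ker L_I)^\perp$.
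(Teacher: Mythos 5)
Your architecture (Weyl for the essential spectrum, scalar reductions for the kernels, a Morse-index count for $\pm e_0$, antisymmetry of $B(\mathcal{L}\cdot,\cdot)$ for $\Phi(\mathcal{Y}_\pm)=0$) matches the paper's in outline, but two of your steps take a genuinely different route. For the kernel and index of $L_R$ when $\beta>1$ you use the swap symmetry to block-diagonalize into $L_+=1-\Delta-3\varphi^2$ and $L_{\mathrm{asym}}=1-\Delta-\tfrac{3-\beta}{1+\beta}\varphi^2$ and then invoke only scalar facts (Kwong/Weinstein); this is correct (the coefficients check out, and $L_{\mathrm{asym}}>0$ follows from $L_{\mathrm{asym}}-(1-\Delta-\varphi^2)=(1-\gamma)\varphi^2\ge 0$ plus positivity of $\varphi$, or trivially when $\beta\ge 3$) and is more self-contained than the paper, which instead bounds the index of $L_R$ by the second variation of the Weinstein functional $J$ and cites Wei--Yao for non-degeneracy. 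Each approach buys something: yours avoids the external non-degeneracy reference; the paper's variational argument is what also yields the coercivity inequality $\Phi(h,k)\ge 2(\int\phi h_1+\psi k_1)^2$ reused later in Lemma \ref{lem_coerc}.

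There is, however, one genuine gap, precisely at the point you yourself flag: the \emph{existence} of the pair $\pm e_0$. Knowing that the negative eigenfunction $\chi_-$ of $L_R$ has nonzero projection onto $(\ker L_I)^\perp$ does \emph{not} imply that the quadratic form of $L_R$ restricted to $(\ker L_I)^\perp$ has a negative direction, which is what the Sylvester argument actually requires. A two-dimensional counter-scenario: $A=\mathrm{diag}(-1,1)$ and $W=\mathrm{span}\{(1,-1)\}$; the negative eigenvector $(1,0)$ has nonzero projection onto $W$, yet $(Aw,w)=0$ on $W$. So your argument proves only $n\bigl(L_I^{1/2}L_RL_I^{1/2}\bigr)\le 1$, i.e.\ \emph{at most} one pair $\pm e_0$; without the lower bound, $\sigma(\mathcal{L})\cap\Real$ could a priori be $\{0\}$ and $\mathcal{Y}_\pm$ need not exist. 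The paper closes this by exhibiting an explicit negative direction inside $(\ker L_I)^\perp$: setting
\begin{equation}
Z=\Lambda Q-\frac{(Q,\Lambda Q)_{L^2\times L^2}}{(Q,Q)_{L^2\times L^2}}\,Q,
\end{equation}
one has $Z\perp(\phi,0)$ and $Z\perp(0,\psi)$ (hence $Z\perp\ker L_I$ in both regimes of $\beta$), and the identity $L_R\Lambda Q=-2Q$ together with $(Q,\Lambda Q)_{L^2\times L^2}=-\tfrac12\|Q\|_{L^2\times L^2}^2$ and $(L_RQ,Q)_{L^2\times L^2}=-4P(Q)$ gives $(L_RZ,Z)_{L^2\times L^2}=-\|Q\|_{L^2\times L^2}^2-P(Q)<0$. (Equivalently, one can invoke the Grillakis--Shatah--Strauss index formula using $L_R^{-1}Q=-\tfrac12\Lambda Q$ mod $\ker L_R$.) You should replace the ``nonzero projection'' sentence by this computation; the rest of your step three (the factorization $\mathcal{L}^2=-\mathrm{diag}(L_IL_R,L_RL_I)$, the equivalence with negative eigenvalues of $L_I^{1/2}L_RL_I^{1/2}$, simplicity, and the regularity of $\mathcal{Y}_\pm$) then goes through.
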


\begin{proof}
{
{The proof follows very closely the scalar case (c.f. \cites{DM_Dyn, DR_Thre, CFR_thre}), and we just sketch the main points here.}
We first show that $\mathcal{L}$ is a relatively compact perturbation of $i(1-\Delta)$. Indeed, define $K: (L^2)^4 \to (L^2)^4 $ as to satisfy $(1-\Delta)^{-1/2}i\mathcal{L}(1-\Delta)^{-1/2} = I-K$. We claim that $K$ is compact. It is enough to show that $\chi (1-\Delta)^{-1/2}$ is a compact operator in $(L^2)^4$ if $\chi$ is a  radial, positive, non-increasing Schwartz function, which follows from the fact that, for any $f$,
\begin{equation}
    \|\chi (1-\Delta)^{-1/2}f\|_{L^2} + \|\nabla(\chi (1-\Delta)^{-1/2}f)\|_{L^2}+\frac{1}{\chi(R)}\|\chi (1-\Delta)^{-1/2}f\|_{L^2(\{|x|\geq R\}}\lesssim \|f\|_{L^2}.
\end{equation}
The assertion about the essential spectrum then follows from Weyl's criterion. We now prove that $\mathcal{L}$ has only one negative eigenvalue (hence, only one positive eigenvalue, by conjugation).

We define the Weinstein functional associated to the corresponding Gagliardo-Nirenberg inequality:
\begin{equation}
J(u,v) = \frac{ M(u,v)^{\frac{1}{2}}K(u,v)^{\frac{3}{2}}}{P(u,v)}.    
\end{equation}

Since $(\phi,\psi)$ is a minimizer of $J$ (see Lemma \ref{gnlemma}), we have $\frac{d^2}{d\eta^2}J(\phi+\eta h,\psi+\eta k)_{|\eta = 0} \geq 0$ for all $(h,k) \in H^1 \times H^1$. A direct calculation then shows that if
\begin{equation}
     \int h_1 \Delta \phi +  k_1 \Delta \psi = 0,
\end{equation}
then
\begin{equation}
    \Phi(h,k) \geq 2 \left(\int \phi h_1 + \psi k_1\right)^2.
\end{equation}

We conclude that $L_R$ can have at most one negative direction, and that $L_I$ has none.

Defining
\begin{equation}
    Z = \Lambda Q - \frac{(Q,\Lambda Q)_{L^2\times L^2}}{(Q,Q)_{L^2\times L^2}}Q
\end{equation}
and noting that $(L_RZ,Z)_{L^2\times L^2}<0$ and $(Z,Q)_{L^2\times L^2}=0$, we conclude that $\mathcal{L}$ has exactly one negative direction. Finally, the assertions about the kernel for $\beta>1$ follow from the non-degeneracy of $L_R$, shown in \cite{WY_Uniq}*{Corollary 4.4}, together with the explicit form of $L_I$, namely
\begin{equation}
    L_I = \begin{pmatrix}1-\Delta-\varphi^2 & 0 \\ 0 & 1-\Delta-\varphi^2 \end{pmatrix},
\end{equation}

since it is known (\cite{W85_stability}*{Proposition 2.8}) that $\ker(1-\Delta-\varphi^2) = \text{span}\{\varphi\}$.

For $0<\beta<1$, we recall that the ground states are the semi-trivial solutions $(\varphi,0)$ and $(0,\varphi)$, where $\varphi$ solves \eqref{scalar_ground}. For $Q = (\varphi,0)$, the operators $L_R$ and $L_I$ reduce to:
\begin{align}
    L_R &=\begin{pmatrix}1-\Delta-3\varphi^2 & 0 \\
0 & 1-\Delta-\beta \varphi^2 \\
\end{pmatrix},\\
L_I&=\begin{pmatrix}1-\Delta-\varphi^2 & 0 \\
0 & 1-\Delta-\beta \varphi^2 \\
\end{pmatrix}.
\end{align}
Since the linearized system is now decoupled, we just need to analyze the operator $L_\gamma := 1-\Delta - \gamma \varphi^2$, $\gamma >0$. It is known (\cite{W85_stability}*{Proposition 2.8}) that $L_1\geq 0$ on $H^1(\mathbb R^3)$, from which we conclude, for $0<\beta<1$:
\begin{equation}
    L_\beta = \beta L_1 + (1-\beta)(1-\Delta) \geq (1-\beta)(1-\Delta) >0.
\end{equation}

Therefore, $\ker({L_\beta}) =\emptyset$. The same Proposition 2.8 of \cite{W85_stability} gives $\ker(L_1) = \text{span}\{\varphi\}$ and $\ker(L_3) = \text{span}\{\nabla \varphi\}$. The proof for $Q = (0,\varphi)$ is analogous.}
\end{proof}


For any ground state $Q = (\phi,\psi)$, consider the following orthogonality relations 
\begin{equation}\label{sub_o1}
\int  h_2\phi = \int k_2\psi  = \int h_1 \partial_j \phi + k_1 \partial_j \psi= 0, \quad 1 \leq j \leq 3,    
\end{equation}
\begin{equation}\label{sub_o2}
    \int h_1 \Delta \phi +  k_1 \Delta \psi = 0.
\end{equation}
\begin{equation}\label{sub_o3}
    B((h,k),\mathcal{Y}_+) = B((h,k),\mathcal{Y_-})=0.
\end{equation}
Denote by $G^\perp$ the set of all $(h,k) \in H^1\times H^1$ satisfying \eqref{sub_o1} and \eqref{sub_o2}, and $\tilde{G}^\perp$ the set of all $(h,k) \in H^1 \times H^1$ satisfying \eqref{sub_o1} and \eqref{sub_o3} (note that either the first or the second member of \eqref{sub_o1} is trivially zero if $\beta < 1$).

By direct calculations, one sees that, for $\beta > 0$,
\begin{equation}
	\Phi_{|\text{span}\{(\nabla \phi,\nabla\psi), (i\phi,i\psi)\}} = 0, \text{ if } \beta < 1,  \quad
	\Phi_{|\text{span}\{(\nabla \phi,\nabla\psi), (i\phi,0), (0,i\psi)\}} = 0, \text{ if } \beta > 1,
\end{equation}
and
\begin{equation}\label{sub_Phi_W_neg}
{\Phi(\phi,\psi) = -2P(\phi,\psi) < 0.}
\end{equation}

Next lemma shows the coercivity property of $\Phi$ on $G^\perp\cup\tilde{G}^\perp$:

\begin{lemma}\label{lem_coerc}
There exists a constant $c > 0$ such that, for any $(f,g) \in G^\perp\cup\tilde{G}^\perp$
\begin{equation}\label{Phi_coerc}
	\Phi(f,g) \geq c\|(f,g)\|_{H^1 \times H^1}^2.
\end{equation}
\end{lemma}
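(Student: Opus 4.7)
The plan is to split $\Phi$ along the block-diagonal form of $i\mathcal{L}$, writing $\Phi(f,g) = \Phi_R(f_1,g_1) + \Phi_I(f_2,g_2)$ with $\Phi_R = \tfrac12(L_R\cdot,\cdot)_{L^2}$ and $\Phi_I = \tfrac12(L_I\cdot,\cdot)_{L^2}$, prove $L^2$-coercivity of each block on the appropriate subspace, and upgrade to $H^1$ at the end. The upgrade is routine: since $\phi,\psi$ are bounded, one has $\Phi(f,g) = \tfrac12\|(f,g)\|_{H^1\times H^1}^2 - \mathcal{Q}(f,g)$ with $|\mathcal{Q}(f,g)|\leq C\|(f,g)\|_{L^2\times L^2}^2$, so an $L^2$-coercivity $\Phi(f,g)\geq c'\|(f,g)\|_{L^2}^2$ immediately yields $\|(f,g)\|_{H^1\times H^1}^2\leq (2+2C/c')\,\Phi(f,g)$.

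For $\Phi_I$, Lemma \ref{spectrum_L} gives $L_I\geq 0$ on $L^2\times L^2$, with essential spectrum in $[1,\infty)$ and a finite-dimensional kernel (spanned by $(\phi,\psi)$ if $\beta<1$, by $(\phi,0),(0,\psi)$ if $\beta>1$). The first two relations in \eqref{sub_o1}, $\int h_2\phi = \int k_2\psi = 0$, kill $\ker L_I$ in either regime, and the spectral gap below $[1,\infty)$ gives the required $L^2$-coercivity of $\Phi_I$.

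For $\Phi_R$, the operator has exactly one negative eigenvalue $-\mu$ with normalized eigenfunction $\chi_R$, a three-dimensional kernel spanned by $(\partial_j\phi,\partial_j\psi)$, and the rest of its spectrum in $[\lambda_R,\infty)$ for some $\lambda_R>0$. The third relation in \eqref{sub_o1} kills the kernel, and I would decompose $(f_1,g_1) = a\chi_R + v_+$ with $v_+\perp_{L^2}\chi_R$ in the positive spectrum of $L_R$, giving $\Phi_R(f_1,g_1)\geq -\mu a^2 + \lambda_R\|v_+\|_{L^2}^2$; the remaining orthogonality must then bound $|a|\leq C\|v_+\|_{L^2}$ with $\mu C^2 < \lambda_R$. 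In the $\tilde{G}^\perp$ case, using $L_R\mathcal{Y}_1 = e_0\mathcal{Y}_2$ and $L_I\mathcal{Y}_2 = -e_0\mathcal{Y}_1$ from Remark \ref{re_spectrum}, the conditions $B((f,g),\mathcal{Y}_\pm)=0$ reduce by taking sum and difference to $(f_1,g_1)\perp_{L^2}\mathcal{Y}_2$ and $(f_2,g_2)\perp_{L^2}\mathcal{Y}_1$; the desired bound then follows once $\mathcal{Y}_2$ is seen to have a non-trivial $\chi_R$-component, which in turn comes from $(L_R\mathcal{Y}_1,\mathcal{Y}_1)_{L^2} = -(L_I\mathcal{Y}_2,\mathcal{Y}_2)_{L^2}<0$ (strict, since $\mathcal{Y}_2\notin\ker L_I$ else $\mathcal{Y}_1=0$) combined with $(\mathcal{Y}_2,\chi_R)_{L^2} = -\mu e_0^{-1}(\mathcal{Y}_1,\chi_R)_{L^2}$. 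In the $G^\perp$ case, \eqref{sub_o2} reads $((f_1,g_1),(\Delta\phi,\Delta\psi))_{L^2}=0$, and one needs $(\chi_R,(\Delta\phi,\Delta\psi))_{L^2}\neq 0$; I would verify this using the variational witness $Z = \Lambda Q - \tfrac{(Q,\Lambda Q)_{L^2}}{(Q,Q)_{L^2}}Q$ from the proof of Lemma \ref{spectrum_L}. A direct Pohozaev computation based on \eqref{pohozaev} gives $(Z,\Delta Q)_{L^2} = -3M(Q)\neq 0$, and since $Z\perp\ker L_R$ with $(L_R Z,Z)_{L^2}<0$, simplicity of the negative eigenvalue forces the $\chi_R$-component of $Z$ to carry this non-vanishing pairing.

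The main obstacle I expect is precisely this transversality check in the $G^\perp$ case --- that the negative eigendirection $\chi_R$ of $L_R$ is not orthogonal to $(\Delta\phi,\Delta\psi)$ --- since, unlike the $\tilde G^\perp$ case where everything is phrased directly via the spectral pair $(\mathcal{Y}_1,\mathcal{Y}_2)$, here one must transfer information from the variational witness $Z$ to the specific negative eigenvector. A cleaner fallback, which I would consider if the direct route becomes too delicate, is to show that \eqref{sub_o2} and the real-part component of \eqref{sub_o3} determine the same one-codimensional complement of $\ker L_R\oplus\mathbb{R}\chi_R$, thereby reducing the $G^\perp$ coercivity to the already-established $\tilde G^\perp$ coercivity.
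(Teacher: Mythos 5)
Your architecture (splitting $\Phi=\Phi_R+\Phi_I$ along the blocks of $i\mathcal{L}$, handling $L_I\geq 0$ via its kernel, and upgrading $L^2$-coercivity to $H^1$-coercivity using boundedness of the potentials) is sound and parallels the paper's eigenvalue-counting strategy. The genuine gap is in the core step for $\Phi_R$. Writing $(f_1,g_1)=a\chi_R+v_+$, you propose to conclude from the remaining orthogonality (to $W:=(\Delta\phi,\Delta\psi)$ in the $G^\perp$ case, to $\mathcal{Y}_2$ in the $\tilde G^\perp$ case) that $|a|\leq C\|v_+\|_{L^2}$ with $\mu C^2<\lambda_R$, and you locate the difficulty in the transversality $(\chi_R,W)_{L^2}\neq 0$. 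But transversality only yields \emph{some} constant $C$, with no control relating it to $\mu$ and $\lambda_R$: in the model $L=\operatorname{diag}(-\mu,\lambda_R)$ on $\mathbb{R}^2$ with constraint vector $W=(w_1,w_2)$, $w_1\neq 0$, the form restricted to $W^\perp$ equals $(\lambda_R-\mu w_2^2/w_1^2)v^2$, which is negative when $|w_2/w_1|$ is large. So transversality is not the missing lemma. The input that actually closes the index count --- and which the paper records in the proof of Lemma \ref{spectrum_L} --- is the \emph{nonnegativity} of $\Phi$ on the hyperplane \eqref{sub_o2}, coming from the second variation of the Weinstein functional $J$ at a minimizer: under \eqref{sub_o2} one has $\Phi(h,k)\geq 2\left(\int\phi h_1+\psi k_1\right)^2\geq 0$. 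This sign information implies your transversality for free (if $(\chi_R,W)=0$ then $\chi_R$ lies in the constraint hyperplane while $\Phi_R(\chi_R)=-\mu<0$, a contradiction) and, combined with the spectral gap and orthogonality to the kernel, gives strict coercivity by the standard min-max/limiting argument. For $\tilde G^\perp$ the analogous input is the signature of $\Phi$ on $\operatorname{span}\{\mathcal{Y}_+,\mathcal{Y}_-\}$: since $\Phi(\mathcal{Y}_\pm)=0$ and $B(\mathcal{Y}_+,\mathcal{Y}_-)\neq 0$, the form has signature $(1,1)$ there, so this plane absorbs the unique negative direction and $\Phi$ is nonnegative on its $B$-orthogonal complement; this is the ``index independent of the basis'' and dimension-counting argument the paper invokes, not a pointwise bound on $|a|$.

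Two smaller points. The sub-step ``since $Z\perp\ker L_R$ with $(L_RZ,Z)<0$, simplicity of the negative eigenvalue forces the $\chi_R$-component of $Z$ to carry this non-vanishing pairing'' is a non sequitur: $(Z,\chi_R)\neq 0$ together with $(Z,\Delta Q)\neq 0$ does not imply $(\chi_R,\Delta Q)\neq 0$, since the pairing with $\Delta Q$ could be carried entirely by the positive-spectrum part of $Z$. And the fallback of showing that \eqref{sub_o2} and the real part of \eqref{sub_o3} ``determine the same one-codimensional complement'' is both unnecessary (two different hyperplanes can each yield coercivity) and unlikely to hold, as $(\Delta\phi,\Delta\psi)$ and $\mathcal{Y}_2$ need not agree modulo $\ker L_R\oplus\mathbb{R}\chi_R$. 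The correct parts of your plan --- the reduction of \eqref{sub_o3} to $(f_1,g_1)\perp\mathcal{Y}_2$ and $(f_2,g_2)\perp\mathcal{Y}_1$, the identity $(L_R\mathcal{Y}_1,\mathcal{Y}_1)=-(L_I\mathcal{Y}_2,\mathcal{Y}_2)<0$, the treatment of $\Phi_I$, and the $H^1$ upgrade --- are all worth keeping.
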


\begin{proof}
{{We also give just a sketch of the proof here.} From the proof of Lemma \ref{spectrum_L}, the relative compactness of the operator $K$ implies that the point spectrum of $i\mathcal{L}$ can only accumulate at $1$. Therefore, only a finite number of eigenvalues (counted with multiplicity) lie in the interval, say, $(-\infty,\tfrac{1}{2}]$. The proof also shows that the first eigenvalue of $i\mathcal{L}$ is negative, and that the second one is zero and has multiplicity $4$ in the case $\beta < 1$, corresponding to $(i\phi,i\psi)$ and $(\partial_k\phi, \partial_k \psi)$, $1\leq k \leq 3$, and multiplicity $5$ in the case $\beta > 1$, corresponding to $(i\phi,0)$, $(0,i\psi)$ and $(\partial_k\phi, \partial_k \psi)$, $1\leq k \leq 3$. The third one then must be positive, implying that $\Phi$ is coercive on $G^{\perp}$. The coercivity on $\tilde{G}^{\perp}$ follows from the index being independent of the basis, from $B(\mathcal{Y}_+,\mathcal{Y}_-) \neq 0$ and from a dimensional counting argument.  }
\end{proof}

\section{Variational characterization of the ground state}\label{Sec4}

The main goal of this section is to give a variational characterization of the ground states. The results here show which functions can lie at the mass-energy threshold and have kinetic energy close to the one of the ground states. This is an essential step towards the modulation theory developed in the following section. We remark that, differently from the scalar case, the ground state here may fail to be unique, and the statements of the propositions below must account for this multiplicity.

\subsection{Bubble decomposition adapted to Gagliardo-Nirenberg-Sobolev for systems of NLS equations}
    
 We first make use of the following \textit{bubble decomposition}, adapted to systems of equations, as to use the same translation parameter in both coordinates. The proof is analogous to the scalar case, to which we refer the reader to \cites{HK, KV_Clay}. A similar decomposition, although adapted to the linear Schr\"odinger evolution, can be found in \cite{FP}.

\begin{prop}\label{bubble_decomp} Let $(f_n,g_n)$ be a bounded sequence in $H^1 \times H^1$. There there exist $J^* \in \{0,1,2,\cdots \}$ $\cup \{ \infty\}$, $\{\phi^j,\psi^j\}_{j=1}^{J^*} \subset H^1 \times H^1$ and $\{x^j_n\}_{j=1}^{J^*}\subset \mathbb R^N$ so that, along a subsequence in $n$, one can write
\begin{align}
    f_n = \sum_{j=1}^{J}\phi^j(x-x_n^j) + r_n^J\\
    g_n = \sum_{j=1}^{J}\psi^j(x-x_n^j) + \tilde{r}_n^J,
\end{align}
for all $1 \leq J \leq J^*$, where
\allowdisplaybreaks
\begin{align}
    \limsup_{J \to J^*} \limsup_n \left(\|r^J_n\|_{L^4} +\|\tilde{r}^J_n\|_{L^4}\right) = 0,\\
    \sup_{J } \limsup_n \left|\|f_n\|^2_{H^1}-\left(\sum_{j=1}^J \|\phi^j\|_{H^1}^2 + \|r_n^J\|_{H^1}^2\right)\right| = 0,\\
    \sup_{J } \limsup_n\left|\|g_n\|^2_{H^1}-\left(\sum_{j=1}^J \|\psi^j\|_{H^1}^2 + \|\tilde{r}_n^J\|_{H^1}^2\right)\right| = 0,\\
     \limsup_{J \to J^*} \limsup_n\left|\|f_n\|^4_{L^4}-\left(\sum_{j=1}^J \|\phi^j\|_{L^4}^4 \right)\right| = 0,\\
     \limsup_{J \to J^*} \limsup_n\left|\|g_n\|^4_{L^4}-\left(\sum_{j=1}^J \|\psi^j\|_{L^4}^4 \right)\right| = 0.
\end{align}
Moreover, if $j \neq k$, then $|x_n^j-x_n^k| \to +\infty$. If $J^*$ is finite, our notation reads $\displaystyle\limsup_{J \to J^*}a_J := a_{J^*}$.
\end{prop}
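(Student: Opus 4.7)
The plan is to adapt the standard scalar profile decomposition of Gérard and Keraani to vector-valued sequences by viewing $(f_n,g_n)$ as a single $\mathbb{C}^2$-valued function in $H^1(\mathbb{R}^3;\mathbb{C}^2)$. The one genuinely new point relative to the scalar case is insisting that both components be translated by the same parameter $x_n^j$; this turns out to be automatic once the profile extraction step is performed in the product space rather than componentwise.

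The first ingredient I would establish is a vector-valued inverse Sobolev embedding: if $(F_n,G_n)$ is bounded in $H^1\times H^1$ and $\limsup_n\bigl(\|F_n\|_{L^4}+\|G_n\|_{L^4}\bigr)>0$, then, up to a subsequence, there exist $x_n\in\mathbb R^3$ and $(\phi,\psi)\in H^1\times H^1$ with $(\phi,\psi)\neq (0,0)$, such that $\bigl(F_n(\cdot+x_n),G_n(\cdot+x_n)\bigr)\rightharpoonup(\phi,\psi)$ in $H^1\times H^1$, with a quantitative lower bound on $\|(\phi,\psi)\|_{H^1\times H^1}$ in terms of the $L^4$ and $H^1$ norms of $(F_n,G_n)$. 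The proof runs the scalar refined Sobolev inequality $\|u\|_{L^4}\lesssim \|u\|_{H^1}^{1/4}\|u\|_{\dot B^{-1/2}_{\infty,\infty}}^{3/4}$ on whichever component has non-vanishing $L^4$-norm (say $F_n$), producing $x_n\in\mathbb R^3$ for which $|F_n(x_n)|$ stays bounded below after suitable averaging; Rellich--Kondrachov then yields a nonzero weak limit $\phi$ for $F_n(\cdot+x_n)$, and passing to a further subsequence produces a (possibly zero) weak limit $\psi$ for $G_n(\cdot+x_n)$ along the same translations.

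With this in hand, I would proceed by induction. Set $r_n^0=f_n$, $\tilde r_n^0=g_n$, and, given $(r_n^{J-1},\tilde r_n^{J-1})$, let
\begin{equation}
\eta_{J-1}:=\limsup_{n}\bigl(\|r_n^{J-1}\|_{L^4}+\|\tilde r_n^{J-1}\|_{L^4}\bigr).
\end{equation}
If $\eta_{J-1}=0$, set $J^*=J-1$ and terminate; otherwise, apply the vector-valued inverse Sobolev step to extract $x_n^J$ and $(\phi^J,\psi^J)$, and define $r_n^J=r_n^{J-1}-\phi^J(\cdot-x_n^J)$, $\tilde r_n^J=\tilde r_n^{J-1}-\psi^J(\cdot-x_n^J)$. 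Asymptotic orthogonality of translations, $|x_n^j-x_n^k|\to\infty$ for $j\neq k$, is forced by noting that the weak limit of $r_n^{J}(\cdot+x_n^J)$ is zero by construction, so a non-vanishing later profile $\phi^k$ must come from a different (eventually divergent) sequence of centers. The Pythagorean identities for $\|\cdot\|_{H^1}$ come from expanding squares and using weak convergence plus the divergence $|x_n^j-x_n^k|\to\infty$; those for $\|\cdot\|_{L^4}$ follow from an iterated Brezis--Lieb argument. Finally, the fact that $\eta_J\to 0$ as $J\to J^*$ uses the quantitative lower bound in the extraction step: at each stage one removes an amount of $H^1$ mass bounded below by a positive power of $\eta_{J-1}$, and the total $H^1$ budget is finite.

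I expect the main obstacle to be precisely the single-translation constraint: a priori $r_n^{J-1}$ could concentrate at one point while $\tilde r_n^{J-1}$ concentrates somewhere arbitrarily far away. The resolution is that one need not extract both profiles with full vigor at the same step — one picks $x_n^J$ from the component whose $L^4$-mass is currently dominant (or from $\sqrt{|r_n^{J-1}|^2+|\tilde r_n^{J-1}|^2}$ treated as a scalar), accepts that the other component may contribute $(\phi^J,\psi^J)$ with one coordinate zero, and recovers the missing mass at a later step along a different translation sequence. Once this is set up, the decoupling of distinct profiles and the eventual vanishing of the $L^4$-remainder go through exactly as in the scalar decomposition in \cites{HK, KV_Clay}.
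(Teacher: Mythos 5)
Your proposal is correct and is essentially the argument the paper has in mind: the paper gives no proof of this proposition, deferring to the scalar decompositions of \cites{HK,KV_Clay}, and your adaptation --- performing the inverse-embedding extraction on whichever component carries the non-vanishing $L^4$ mass, accepting profiles with one zero coordinate, and recovering the other component's concentration at a later step along a different (hence asymptotically orthogonal) sequence of centers --- is exactly how the common translation parameter is obtained. (The specific exponents you quote in the refined Sobolev inequality do not scale correctly, but this is immaterial: any refined Gagliardo--Nirenberg inequality of that type, as in \cite{HK}, gives the quantitative lower bound on the extracted profile needed to force $\eta_J\to 0$.)
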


\begin{re} Note that we have the same translation parameters $x_n^j$ for both $\phi^j$ and $\psi^j$. That implies
\begin{equation}
 \limsup_{J \to J^*} \limsup_n\left|P(f_n,g_n) - \sum_{j=1}^J P(\phi^j,\psi^j)  \right| = 0.
\end{equation}
\end{re}


We now make use of the decomposition above to establish a variational characterization of the ground states.
\begin{prop}\label{prop_mod_1} Let $(f_n,g_n)$ be a sequence in $H^1 \times H^1$ such that $M(f_n,g_n) = M(\phi,\psi)$ and $E(f_n,g_n) = E(\phi,\psi)$, where $(\phi,\psi)$ is any ground state to \eqref{sys_NLS}. If 
\begin{equation}
    K(f_n,g_n) \to K(\phi,\psi),
\end{equation}
then, up to a subsequence, there exist $\theta_0, \theta_1 \in \mathbb{R}/2\pi \mathbb{Z}$ and $\{x_n\} \subset \mathbb R^N$ such that
\begin{equation}\label{mod_close_H1}
    \min_{(\phi,\psi)\in \mathcal{G}}\left\|\left(e^{i\theta_0}f_n(\cdot-x_n),e^{i\theta_1}g_n(\cdot-x_n)\right) - (\phi,\psi)\right\|_{H^1 \times H^1} \to 0.
\end{equation}
\end{prop}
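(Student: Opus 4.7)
The plan is to apply the bubble decomposition of Proposition \ref{bubble_decomp} to $(f_n,g_n)$ and combine it with the sharp vectorial Gagliardo--Nirenberg inequality of Lemma \ref{gnlemma} to show that a single profile survives and coincides, up to the symmetries of the system, with a ground state. The heart of the argument reduces the multi-profile analysis to an elementary Cauchy--Schwarz inequality that is insensitive to the non-uniqueness of ground states.

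First, from $E(f_n,g_n)=\tfrac{1}{2}K(f_n,g_n)-\tfrac{1}{4}P(f_n,g_n)$, together with the hypotheses and the Pohozaev identities \eqref{pohozaev}, one gets $P(f_n,g_n)\to P(\phi,\psi)$. The sequence is bounded in $H^1\times H^1$, so Proposition \ref{bubble_decomp} yields profiles $\{(\phi^j,\psi^j)\}$ and translations $\{x_n^j\}$ satisfying the usual $H^1$, $L^4$ and (by the remark following that proposition) $P$-decoupling relations; in particular $\sum_j M_j\leq M(\phi,\psi)$, $\sum_j K_j\leq K(\phi,\psi)$ and $\sum_j P(\phi^j,\psi^j)=P(\phi,\psi)$, where $M_j:=M(\phi^j,\psi^j)$ and $K_j:=K(\phi^j,\psi^j)$.

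Applying \eqref{GN} to each profile, summing, and using \eqref{sharpc}, after the normalization $\alpha_j:=M_j/M(\phi,\psi)$, $\beta_j:=K_j/K(\phi,\psi)$ one obtains
\begin{equation*}
1\leq \sum_j \alpha_j^{1/2}\beta_j^{3/2}.
\end{equation*}
A Cauchy--Schwarz argument then gives
\begin{equation*}
\sum_j \alpha_j^{1/2}\beta_j^{3/2}=\sum_j (\alpha_j\beta_j)^{1/2}\beta_j \leq \Bigl(\sum_j \alpha_j\beta_j\Bigr)^{1/2}\Bigl(\sum_j \beta_j^2\Bigr)^{1/2}\leq \max_j \beta_j,
\end{equation*}
using $\sum_j \alpha_j\leq 1$ and $\sum_j \beta_j\leq 1$. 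Hence $\max_j\beta_j=1$, which combined with $\sum_j\beta_j\leq 1$ forces the existence of a unique index $j_0$ with $K_{j_0}=K(\phi,\psi)$ and $K_j=0$ (hence $(\phi^j,\psi^j)\equiv 0$) for $j\neq j_0$. Tracking equality throughout yields $M_{j_0}=M(\phi,\psi)$ and $P(\phi^{j_0},\psi^{j_0})=P(\phi,\psi)$, the $L^2$, $\dot H^1$ and $L^4$ norms of the remainder vanishing in the limit, and the surviving profile saturating \eqref{GN}.

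It remains to identify $(\phi^{j_0},\psi^{j_0})$ with an element of $\mathcal{G}$. Since it achieves equality in the sharp Gagliardo--Nirenberg inequality, it is a minimizer of the Weinstein functional $J$ appearing in the proof of Lemma \ref{spectrum_L}. Replacing each component by its modulus preserves $M$ and $P$ but strictly decreases $K$ unless the phase of each component is constant (componentwise diamagnetic inequality); therefore $(\phi^{j_0},\psi^{j_0})=(e^{i\theta_0}\tilde\phi,e^{i\theta_1}\tilde\psi)$ for some nonnegative $(\tilde\phi,\tilde\psi)$ which, since $M$ and $K$ match those of $(\phi,\psi)$, lies in $\mathcal{G}$. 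Choosing $x_n=-x_n^{j_0}$ and passing to the appropriate subsequence for $\theta_0,\theta_1$, the $H^1$-decoupling part of Proposition \ref{bubble_decomp} produces \eqref{mod_close_H1}. The main obstacle throughout is that the multiplicity of ground states prevents any direct fixed-profile identification: the Cauchy--Schwarz step is what allows us to concentrate on a single bubble purely from the mass/kinetic-energy budget, and the equality analysis in the vectorial sharp Gagliardo--Nirenberg inequality—which in the scalar case is classical—must be handled with care componentwise, after which only existence (not uniqueness) of a limiting ground state is needed, consistently with the minimum over $\mathcal{G}$ in the statement.
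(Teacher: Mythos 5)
Your proof is correct and follows essentially the same route as the paper: bubble decomposition, decoupling of $P$, profile-by-profile application of the sharp Gagliardo--Nirenberg inequality, concentration into a single bubble, and identification of that bubble with a ground state via the characterization of the optimizers. The only cosmetic difference is that you isolate the single profile by a Cauchy--Schwarz estimate on the normalized quantities $\alpha_j,\beta_j$, whereas the paper uses the superadditivity bound $\sum_j K_j^{3/2}\leq\bigl(\sum_j K_j\bigr)^{3/2}$; the two are equivalent.
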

\begin{proof} If \eqref{mod_close_H1} does not hold for any choice of $\theta_0$ and $\{x_n\}$, passing to a subsequence, if necessary, we apply the bubble decomposition to write, as in Propostion \ref{bubble_decomp},
\begin{align}
    f_n = \sum_{j=1}^{J}\phi^j(x-x_n^j) + r_n^J\\
    g_n = \sum_{j=1}^{J}\psi^j(x-x_n^j) + \tilde{r}_n^J.
\end{align}
Using the fact that $E(f_n,g_n) = E(\phi,\psi)$, we deduce
\begin{equation}\label{bubble_modulation}
    P(f_n,g_n) = 2K(f_n,g_n)-4E(f_n,g_n)\to 2K(\phi,\psi)-4E(\phi,\psi) = P(\phi,\psi).
\end{equation}
By the profile decoupling, this means
\begin{equation}
    \sum_{j=1}^{J^*}P(\phi^j,\psi^j) = P(\phi,\psi).
\end{equation}
On the other hand, by the $H^1$ decoupling, for all $J$,
\begin{align}
    M(\phi^k,\psi^k) \leq  \sum_{j=1}^J M(\phi^j,\psi^j) &\leq M(\phi,\psi), \text{ for } k \leq J,
\end{align}
and
\begin{align}
    \sum_{j=1}^J K(\phi^j,\psi^j) &\leq K(\phi,\psi),
\end{align}
which implies
\begin{align}
    \sup_{J} M(\phi^k,\psi^k) \leq M(\phi,\psi) \quad \mbox{and} \quad \sum_{j=1}^{J^*} K(\phi^j,\psi^j) \leq K(\phi,\psi).
\end{align}
Therefore, by the sharp Gagliardo-Nirenberg inequality,
\begin{equation}\label{ineq_1}
    P(\phi,\psi) = \sum_{j=1}^{J^*}P(\phi^j,\psi^j) \leq c_{GN}\sum_{j=1}^{J^*} M(\phi^j,\psi^j)^{\frac{1}{2}}K(\phi^j, \psi^j)^\frac{3}{2} \leq c_{GN}M(\phi,\psi)^{\frac{1}{2}}\sum_{j=1}^{J^*} K(\phi^j, \psi^j)^\frac{3}{2}.
\end{equation}
Since 
\begin{equation}\label{ineq_2}
    \sum_{j=1}^{J^*} K(\phi^j, \psi^j)^\frac{3}{2} \leq \left[\sum_{j=1}^{J^*} K(\phi^j, \psi^j)\right]^\frac{3}{2} \leq K(\phi,\psi)^\frac{3}{2},
\end{equation}
the uniqueness up to symmetries of the set of minimizers of the sharp Gagliardo-Nirenberg inequality ensures that all the inequalities in \eqref{ineq_1} and \eqref{ineq_2} are equalities, and also implies that $J^*\leq1$, otherwise the equality in \eqref{ineq_2} cannot hold. The case $J^*=0$ is precluded since $P(\phi,\psi)\neq 0$.

Therefore, up to constant phase and translation, $\phi^1 = \phi$, $\psi^1$ = $\psi$ for some ground state $(\phi,\psi)$. The decoupling in \eqref{bubble_modulation} then ensures $\|(r_n^1,\tilde{r}_n^1)\|_{H^1 \times H^1} \to 0$. We have thus found $\tilde{\theta}_0, \tilde{\theta}_1$ and $\{\tilde{x}_n\}$ such that 
\begin{equation}
     \min_{(\phi,\psi)\in \mathcal{G}}\left\|\left(e^{i\tilde{\theta}_0}f_n(\cdot-\tilde{x}_n),e^{i\tilde{\theta}_1}g_n(\cdot-\tilde{x}_n)\right) - (\phi,\psi)\right\|_{H^1 \times H^1}  \to 0,
\end{equation}
contradicting our first assumption that \eqref{mod_close_H1} does not hold.
\end{proof}

We now define the quantity
\begin{equation}\label{def_delta}
    \delta(f,g) = |K(f,g) -K(\phi,\psi)|. 
\end{equation}
and note that Proposition \ref{prop_mod_1} gives:

\begin{cor}\label{modulation_quant_1}
If $M(f,g) = M(\phi,\psi)$ and $E(f,g) = E(\phi,\psi)$, then
\begin{equation}
    \inf_{\theta_0, \theta_1,y}  \min_{(\phi,\psi)\in \mathcal{G}} \left\|\left(e^{i\theta_0}f(\cdot-y),e^{i\theta_1}g(\cdot-y)\right) - (\phi,\psi)\right\|_{H^1 \times H^1} <\varepsilon(\delta(f,g)),
\end{equation}
 where $\varepsilon(d) \to 0$ as $d \to 0$.
\end{cor}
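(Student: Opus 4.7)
The plan is a standard contradiction/compactness argument that reduces the statement to a direct application of Proposition \ref{prop_mod_1}. Suppose for contradiction that the conclusion fails. Then there exist $\varepsilon_0>0$ and a sequence $(f_n,g_n)$ in $H^1\times H^1$ with $M(f_n,g_n)=M(\phi,\psi)$, $E(f_n,g_n)=E(\phi,\psi)$, and $\delta(f_n,g_n)\to 0$, such that for every $n$,
\begin{equation}
    \inf_{\theta_0,\theta_1,y}\min_{(\phi,\psi)\in\mathcal{G}}\left\|\left(e^{i\theta_0}f_n(\cdot-y),e^{i\theta_1}g_n(\cdot-y)\right)-(\phi,\psi)\right\|_{H^1\times H^1}\geq \varepsilon_0.
\end{equation}

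By the definition of $\delta$ in \eqref{def_delta}, the assumption $\delta(f_n,g_n)\to 0$ is exactly $K(f_n,g_n)\to K(\phi,\psi)$. Since the mass and the kinetic energy of $(f_n,g_n)$ are bounded, the sequence is bounded in $H^1\times H^1$, so Proposition \ref{prop_mod_1} applies and furnishes, up to extraction, parameters $\theta_0^n, \theta_1^n \in \mathbb{R}/2\pi\mathbb{Z}$ and $x_n\in\mathbb{R}^3$ such that
\begin{equation}
    \min_{(\phi,\psi)\in\mathcal{G}}\left\|\left(e^{i\theta_0^n}f_n(\cdot-x_n),e^{i\theta_1^n}g_n(\cdot-x_n)\right)-(\phi,\psi)\right\|_{H^1\times H^1}\to 0,
\end{equation}
which contradicts the lower bound $\varepsilon_0>0$. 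This contradiction establishes the existence of a function $\varepsilon(d)$, defined for instance as
\begin{equation}
    \varepsilon(d):=\sup\left\{\inf_{\theta_0,\theta_1,y}\min_{(\phi,\psi)\in\mathcal{G}}\left\|\left(e^{i\theta_0}f(\cdot-y),e^{i\theta_1}g(\cdot-y)\right)-(\phi,\psi)\right\|_{H^1\times H^1}\right\},
\end{equation}
where the supremum is taken over all $(f,g)\in H^1\times H^1$ satisfying $M(f,g)=M(\phi,\psi)$, $E(f,g)=E(\phi,\psi)$ and $\delta(f,g)\leq d$; the argument above shows $\varepsilon(d)\to 0$ as $d\to 0^+$, and the inequality in the statement holds by construction.

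The only subtle point is that the mass and energy of $(f,g)$ are pinned, but the infimum is over all ground states in $\mathcal{G}$, not only the one used to fix the level sets. This is not an issue because, as recalled after \eqref{MKdef}, $M$, $K$, and $E$ take the same value on every element of $\mathcal{G}$, so the hypotheses of Proposition \ref{prop_mod_1} are indeed met (with the same fixed ground state that defines the level sets). No further analysis is needed; the real work was done in Proposition \ref{prop_mod_1}.
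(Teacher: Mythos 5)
Your argument is correct and is exactly the standard contradiction/compactness reduction that the paper leaves implicit (the corollary is stated as an immediate consequence of Proposition \ref{prop_mod_1} with no written proof). The only cosmetic point is that your supremum definition of $\varepsilon(d)$ yields the bound with ``$\leq$'' rather than the strict ``$<$'' of the statement, which is fixed by replacing $\varepsilon(d)$ with, say, $\varepsilon(d)+d$.
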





\section{Modulation theory}\label{Sec5}

We are now able to construct and refine the modulation theory for the system \eqref{sys_NLS}. Our first result shows that, on connected time intervals, the ground state given by the variational characterization must remain the same.

To abbreviate the notation, if $I \subset \mathbb R$ and $(u,v):I \to H^1\times H^1$, we often write $\delta(t) = \delta(u(t),v(t))$ for $t \in I$. We then have the following time-dependent version of modulation.
\begin{cor}\label{cor:connected_modulation} There exists $\delta_0>0$ such that if $(u,v): I \to H^1 \times H^1$ is a continuous curve on an interval $I = (a,b)$, $M(u(t),v(t)) = M(\phi,\psi)$, $E(u(t),v(t)) = E(\phi,\psi)$ and $\delta(t) <\delta_0$ for all $t \in I$, then there exist $(\phi,\psi)\in\mathcal{G}$, and functions $\theta_0,\theta_1: I \to \mathbb{R}/2\pi\mathbb Z$ and $x_0: I \to \mathbb{R}^3$ such that, for all $t \in I$,
\begin{equation}\label{modulation_param_1}
    \left\|\left(e^{i\theta_0(t)}u(\cdot-x_0(t)),e^{i\theta_1(t)}v(\cdot-x_0(t))\right) - (\phi,\psi)\right\|_{H^1 \times H^1} < \varepsilon(\delta(t)),
\end{equation}
where $\varepsilon(d) \to 0$ as $d \to 0$.
\end{cor}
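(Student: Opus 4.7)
The plan is to apply Corollary~\ref{modulation_quant_1} pointwise in $t$ and then invoke the discreteness of $\mathcal{G}$ modulo phase rotations and translations, together with the connectedness of $I$, to see that one can fix a single ground state for the entire curve. Concretely, at each $t \in I$, Corollary~\ref{modulation_quant_1} together with $\delta(t)<\delta_0$ yields \emph{some} $Q_t \in \mathcal{G}$ and parameters $\theta_0(t), \theta_1(t), x_0(t)$ satisfying \eqref{modulation_param_1}. The entire content of the statement is then that the orbit of $Q_t$ under phase and translation symmetries can be chosen independently of $t$.

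To handle this I would first recall, from the discussion preceding \eqref{betarange}, the structure of $\mathcal{G}$. When $\beta>1$, the uniqueness result of \cite{WY_Uniq} shows that $\mathcal{G}$ consists of a single orbit $\{(e^{i\theta_0}(1+\beta)^{-1/2}\varphi(\cdot-y),\,e^{i\theta_1}(1+\beta)^{-1/2}\varphi(\cdot-y))\}$, and the conclusion is immediate. When $0<\beta<1$, $\mathcal{G}$ splits into exactly two orbits, namely those of $(\varphi,0)$ and $(0,\varphi)$, where $\varphi$ is the positive radial solution of \eqref{scalar_ground}. A direct computation shows these orbits are uniformly separated in $H^1\times H^1$: for any phases $\alpha_0,\alpha_1$ and translations $y,y'$,
\[
\|(e^{i\alpha_0}\varphi(\cdot-y),\,0)-(0,\,e^{i\alpha_1}\varphi(\cdot-y'))\|_{H^1\times H^1}^2=2\|\varphi\|_{H^1}^2=:2c_0^2>0.
\]

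Finally, I would shrink $\delta_0$ so that $\varepsilon(d)<c_0/3$ whenever $d<\delta_0$ (possible since $\varepsilon(d)\to 0$ as $d\to 0$). Unpacking \eqref{modulation_param_1} then forces $\|v(t)\|_{H^1}<c_0/3$ whenever $Q_t$ lies in the orbit of $(\varphi,0)$, and $\|v(t)\|_{H^1}>2c_0/3$ whenever $Q_t$ lies in the orbit of $(0,\varphi)$. Since $t\mapsto \|v(t)\|_{H^1}$ is continuous on the connected interval $I$ and must take values in the disjoint set $[0,c_0/3)\cup(2c_0/3,\infty)$, the intermediate value theorem forces a single orbit globally. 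Choosing any representative $(\phi,\psi)$ of that orbit and the pointwise parameters $\theta_0(t),\theta_1(t),x_0(t)$ produced in the first step completes the proof. The main (though minor) obstacle is this separation–connectedness argument for $\beta<1$; everything else is a direct consequence of Corollary~\ref{modulation_quant_1} and the classification of ground states already recorded in the introduction.
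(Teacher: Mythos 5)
Your proposal is correct and follows essentially the same route as the paper: the paper's proof also rests on the observation that the symmetry orbits of distinct ground states are at a uniform positive $H^1\times H^1$ distance (trivially so for $\beta>1$, where there is only one orbit), so that for $\delta_0$ small the ground state selected by Corollary~\ref{modulation_quant_1} is uniquely determined at each $t$ and, by continuity of the curve and connectedness of $I$, constant in $t$. Your explicit computation of the separation $\sqrt{2}\,\|\varphi\|_{H^1}$ for $0<\beta<1$ and the intermediate-value argument on $t\mapsto\|v(t)\|_{H^1}$ simply make precise what the paper states more tersely.
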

\begin{proof}
We first note that the manifolds generated by the group of symmetries acting on the ground states are at a positive distance from each other, since all the ground states are non-negative and radial. That is,
\begin{equation}
     \inf_{\substack{x \in \Real^3\\ \theta_0, \theta_1 \in \mathbb{R}/2\pi\mathbb Z}}\min_{\substack{(\phi,\psi)\in \mathcal{G}\\ (\tilde\phi,\tilde\psi)\in \mathcal{G}\\{(\phi,\psi)\neq (\tilde\phi,\tilde\psi)}}}\left\|\left(e^{i\theta_0}\phi(\cdot-x),e^{i\theta_1}\psi(\cdot-x)\right) - (\tilde \phi,\tilde \psi)\right\|_{H^1 \times H^1} > 0.
\end{equation}

This implies that the choice of ground state in \eqref{modulation_param_1} is unique, if $\delta_0>0$ is chosen sufficiently small. We then apply Corollary \ref{modulation_quant_1} to obtain $\theta_0(t)$, $\theta_1(t)$ and $x_0(t)$ for every $t \in I$. 
\end{proof}

\begin{re} In the case $0< \beta < 1$, if $(\phi,\psi) = (\varphi,0)$, one can choose $\theta_1 \equiv 0$, since $v$ itself is small in the $H^1$ norm. The similar choice $\theta_0 \equiv 0$ can be made in the case $(\phi,\psi) = (0,\varphi)$.
\end{re}

We now exploit the coercivity of the linearized operator to modify the modulation parameters in order to get control on their size depending on $\delta(u(t),v(t))$.
\subsection{Modulation with orthogonality}
\begin{prop}\label{prop_mod_ort}
Let $\delta_0 >0$ and $I_0$ be the set
\begin{equation}
    I_0 = \{t : \delta(t)< \delta_0\}.
\end{equation} 

Then, for every connected interval $J \subset I_0$, there exist  $(\phi,\psi)\in \mathcal{G}$, $\theta, \tilde\theta: J \to \mathbb{R}/2\pi\mathbb Z$, $x: J \to \mathbb{R}^3$, $\alpha: J \to \mathbb{R}$ and $(h,k) : J \to H^1 \times H^1 $ such that, for all $t \in J$,
\begin{equation}
    \left\|(u(t),v(t))-\left[(1+\alpha(t))\left\{e^{i\theta(t)}\phi(\cdot-x(t)),e^{i\tilde\theta(t)}\psi(\cdot-x(t))\right\}+(h(t),k(t))\right]\right\|_{H^1 \times H^1} < \epsilon(\delta(t)).
\end{equation}

Moreover, in the case $0<\beta<1$, one can choose $\tilde\theta \equiv 0$ if $(\phi, \psi) = (\varphi,0)$ and $\theta \equiv 0$ if $(\phi,\psi)=(0,\varphi)$.
\end{prop}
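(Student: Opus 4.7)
The plan is to start from the rough modulation of Corollary~\ref{cor:connected_modulation}, which on the connected component $J$ picks out a single ground state $(\phi,\psi)\in\mathcal G$ together with rough parameters $\theta_0(t), \theta_1(t), x_0(t)$ such that $(e^{i\theta_0(t)}u(\cdot-x_0(t)), e^{i\theta_1(t)}v(\cdot-x_0(t)))$ lies within $\varepsilon(\delta(t))$ of $(\phi,\psi)$ in $H^1 \times H^1$. I then refine these parameters by an implicit function theorem argument, imposing the six orthogonality conditions of $G^\perp$ from \eqref{sub_o1}--\eqref{sub_o2} and absorbing the remaining scaling-type freedom into a scalar $\alpha(t)$; the remainder $(h,k)$ is defined as the exact defect, and the bound in the statement is really the $H^1\times H^1$-smallness of $(h,k)$.

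Concretely, for $(\alpha, \theta, \tilde\theta, y) \in \Real \times (\Real/2\pi\mathbb Z)^2 \times \Real^3$ and $(f,g) \in H^1 \times H^1$, set
\[
(h,k) := (f,g) - (1+\alpha)\bigl(e^{i\theta}\phi(\cdot-y),\; e^{i\tilde\theta}\psi(\cdot-y)\bigr),
\]
and consider the $\Real^6$-valued map
\[
F(\alpha,\theta,\tilde\theta,y; f,g) := \left(\int h_2 \phi,\; \int k_2 \psi,\; \Bigl\{\int (h_1\partial_j\phi + k_1\partial_j\psi)\Bigr\}_{j=1}^3,\; \int(h_1\Delta\phi + k_1\Delta\psi)\right).
\]
At the base point $(\alpha,\theta,\tilde\theta,y,f,g) = (0,0,0,0,\phi,\psi)$ the remainder $(h,k)$ vanishes and $F = 0$. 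A direct differentiation at this point yields a block-diagonal Jacobian with respect to the modulation parameters: the $\theta$-entry equals $-\|\phi\|_{L^2}^2$, the $\tilde\theta$-entry equals $-\|\psi\|_{L^2}^2$, the $3\times 3$ translation block is diagonal with entries $-\|\partial_j\phi\|_{L^2}^2 - \|\partial_j\psi\|_{L^2}^2$, and the $\alpha$-entry is $-\int(\phi\Delta\phi + \psi\Delta\psi) = K(\phi,\psi) > 0$. The mixing entries vanish by parity of $\phi,\psi$ (real-valued) against the imaginary-direction variables $\theta,\tilde\theta$. Hence the Jacobian is invertible, and the implicit function theorem solves $F = 0$ for $(\alpha,\theta,\tilde\theta,y)$ as smooth functions of $(f,g)$ in a neighborhood of $(\phi,\psi)$.

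Applying this with $(f,g) = (e^{i\theta_0(t)}u(\cdot-x_0(t)), e^{i\theta_1(t)}v(\cdot-x_0(t)))$ and composing with the rough modulation produces the desired $\alpha(t), \theta(t), \tilde\theta(t), x(t)$ and remainder $(h(t),k(t))$, continuous in $t \in J$. Since at the base point the remainder is zero and the rough data lies within $\varepsilon(\delta(t))$ of $(\phi,\psi)$, continuity of the implicit solution gives $\|(h(t),k(t))\|_{H^1\times H^1} \lesssim \varepsilon(\delta(t))$, possibly after enlarging $\varepsilon$. In the semi-trivial case $0<\beta<1$ with ground state $(\varphi,0)$, I would drop both the $\tilde\theta$ variable and the second coordinate of $F$ (which is identically zero), leaving a $5\times 5$ system whose reduced Jacobian is still nondegenerate; the remark following Corollary~\ref{cor:connected_modulation} then lets one absorb the full $v$ into $k$ while keeping $\tilde\theta \equiv 0$. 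The case $(0,\varphi)$ is symmetric.

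The main obstacle is carrying out the Jacobian non-degeneracy bookkeeping uniformly across the three regimes (fully nontrivial ground state for $\beta > 1$, and the two semi-trivial ones for $\beta < 1$), since the kernel structure of $\mathcal L$ changes between these cases and the number of orthogonality conditions must be adjusted accordingly. Once this is handled, the propagation of the parameters along all of $J$ is immediate from the positive $H^1$-separation of ground state orbits established in the proof of Corollary~\ref{cor:connected_modulation}, which prevents jumps between different orbits on a connected interval.
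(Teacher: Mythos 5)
Your proposal is correct and follows essentially the same route as the paper: rough modulation from Corollary~\ref{cor:connected_modulation} on the connected interval (with a single fixed ground state, thanks to the positive separation of orbits), followed by an implicit function theorem argument to impose the orthogonality conditions defining $G^\perp$, with the appropriate reduction of variables and equations in the semi-trivial case $0<\beta<1$. The only immaterial organizational difference is that the paper applies the IFT to $(\theta,\tilde\theta,x)$ alone using the conditions \eqref{sub_o1} and then solves the remaining condition \eqref{sub_o2} for $\alpha$ explicitly (it is affine in $\alpha$), whereas you fold $\alpha$ into a single six-by-six IFT system with the same (diagonal, nondegenerate) Jacobian.
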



\begin{proof}
To prove Proposition \ref{prop_mod_ort}, we first make use of the Implicit Function Theorem to modify $\theta_0$ and $x_0$ in order to impose suitable orthogonality conditions. Then, we can use the coercivity of the quadratic form $\Phi$ to obtain bounds on the derivatives of the parameters.

Given a connected component $J \subset I_0$, consider the corresponding $\theta_0$, $\theta_1$, $x_0$ and $(\phi,\psi)$ given by Corollary \ref{cor:connected_modulation}. In the case $0<\beta < 1$, if $(\phi,\psi) = (\varphi, 0)$, define
\begin{align}
    \Omega &: H^1 \times \mathbb R/2\pi \mathbb Z \times \mathbb R^3 \to \mathbb R^4,\\
    \Omega(u,\theta,x) &= \begin{bmatrix}
    \Im (u,e^{i\theta}\varphi(\cdot-x))_{L^2}\\
    \Re (u,e^{i\theta}\partial_{x_1}\varphi(\cdot-x))_{L^2 }\\
    \Re (u,e^{i\theta}\partial_{x_2}\varphi(\cdot-x))_{L^2 }\\
    \Re (u,e^{i\theta}\partial_{x_3}\varphi(\cdot-x))_{L^2 }
    \end{bmatrix}.
\end{align}

For all $t \in J$, note that
\begin{equation}
    \Omega(e^{i\theta_0(t)}\phi(\cdot-x_0(t)),\theta_0(t),x_0(t)) = 0
\end{equation}
and
\begin{equation}
    \det\frac{\partial \Omega}{\partial{(\theta,x)}}\big|_{(e^{i\theta_0(t)}\varphi(\cdot-x_0(t)),\theta_0(t),x_0(t))} = -\left(\int \phi^2\right)\left(\displaystyle\prod_{k=1}^3\int |\partial_{x_k}\varphi|^2\right)\neq 0.
\end{equation}

Note that the right-hand side of the last equations is independent of $t$. Therefore, there exist balls $U_t \subset H^1 $, $V_t\subset \mathbb R/2\pi \mathbb Z \times \mathbb R^3 $ (with diameter bounded below uniformly on $t \in J$) and $\xi_t:U_t \to V_t$ such that for all $t\in J$,   $e^{i\theta_0(t)}\varphi(\cdot-x_0(t)) \in U_t$, $(\theta_0(t),x_0(t))\in V_t$, and $\Omega(u,\xi_t(u,v)) = 0$ for all $u \in U_t$. By choosing a smaller $\delta_0>0$, if necessary, Corollary \ref{cor:connected_modulation} ensures $u(t)\in U_t$ for all $t \in J$. Hence, defining $(\theta(t),x(t)) = \xi_t(u(t))$, we have, for all $t$,
\begin{equation}
     \begin{bmatrix}
    \Im (u(t),e^{i\theta(t)}\varphi(\cdot-x(t)))_{L^2}\\
    \Re (u(t),e^{i\theta(t)}\partial_{x_1}\varphi(\cdot-x(t)))_{L^2}\\
   \Re (u(t),e^{i\theta(t)}\partial_{x_2}\varphi(\cdot-x(t)))_{L^2}\\
   \Re (u(t),e^{i\theta(t)}\partial_{x_3}\varphi(\cdot-x(t)))_{L^2}\\
    \end{bmatrix} = 0.
\end{equation}

Now write
\begin{equation}\label{mod_decomposition}
    (e^{-i\theta(t)}u(x+x(t),t),v(x,t)) = (1+\alpha(t)) (\varphi,0)+ (h(x,t),k(x,t)),
\end{equation}
where
\begin{equation}
    \alpha(t) := \frac{ \Re\displaystyle\int  e^{-i\theta(t)}u(x+x(t),t) \Delta\varphi \, dx}{K(\varphi,0)}-1
\end{equation}
is chosen as to make $(h,k) \in G^\perp$. In this case, we can take $\tilde\theta \equiv 0$. The case $(\phi,\psi) = (0,\varphi)$ is handled analogously.

In the case $\beta>1$, we recall that the ground state is given by $((1+\beta)^{-1/2}\varphi,(1+\beta)^{-1/2}\varphi)$. Since we do not have a semi-trivial ground state, we take advantage of both phase symmetries and define
\begin{align}
    \Omega &: H^1 \times H^1\times \mathbb R/2\pi \mathbb Z \times \mathbb R/2\pi \mathbb Z \times \mathbb R^3 \to \mathbb R^5,\\
    \Omega((u,v),\theta, \tilde\theta,x) &= \begin{bmatrix}
    \Im (u,e^{i\theta}\varphi(\cdot-x))_{L^2}\\
    \Im (v,e^{i\tilde\theta}\varphi(\cdot-x))_{L^2}\\
    \Re (u,e^{i\theta}\partial_{x_1}\varphi(\cdot-x))_{L^2 }+ \Re (v,e^{i\tilde\theta}\partial_{x_1}\varphi(\cdot-x))_{L^2 }\\
     \Re (u,e^{i\theta}\partial_{x_2}\varphi(\cdot-x))_{L^2 }+ \Re (v,e^{i\tilde\theta}\partial_{x_2}\varphi(\cdot-x))_{L^2 }\\ 
     \Re (u,e^{i\theta}\partial_{x_3}\varphi(\cdot-x))_{L^2 }+ \Re (v,e^{i\tilde\theta}\partial_{x_3}\varphi(\cdot-x))_{L^2 }\\
    \end{bmatrix}.
\end{align}

In the same fashion, since $\det\frac{\partial \Omega}{\partial{(\theta,x)}}\big|_{((e^{i\theta_0(t)}\varphi(\cdot-x_0(t)),e^{i\theta_1(t)}\varphi(\cdot-x_0(t))),\theta_0(t), \theta_1(t),x_0(t))} \neq 0$, we obtain $(\theta(t), \tilde\theta(t),x(t))\in \mathbb R/2\pi \mathbb Z \times \mathbb R/2\pi \mathbb Z \times \mathbb R^3 $ such that

\begin{equation}
     \begin{bmatrix}
    \Im (u(t),e^{i\theta(t)}\varphi(\cdot-x(t)))_{L^2}\\
    \Im (v(t),e^{i\tilde\theta(t)}\varphi(\cdot-x(t)))_{L^2}\\
    \Re (u(t),e^{i\theta(t)}\partial_{x_1}\varphi(\cdot-x(t)))_{L^2}+\Re (v(t),e^{i\tilde\theta(t)}\partial_{x_1}\varphi(\cdot-x(t)))_{L^2}\\
   \Re (u(t),e^{i\theta(t)}\partial_{x_2}\varphi(\cdot-x(t)))_{L^2}+\Re (v(t),e^{i\tilde\theta(t)}\partial_{x_2}\varphi(\cdot-x(t)))_{L^2}\\
   \Re (u(t),e^{i\theta(t)}\partial_{x_3}\varphi(\cdot-x(t)))_{L^2}+\Re (v(t),e^{i\tilde\theta(t)}\partial_{x_3}\varphi(\cdot-x(t)))_{L^2}\\
    \end{bmatrix} = 0.
\end{equation}

We then write

\begin{equation}\label{mod_decomposition_2}
    (e^{-i\theta(t)}u(x+x(t),t),e^{-i\tilde\theta(t)}v(x+x(t),t)) = (1+\beta)^{-1/2}(1+\alpha(t)) (\varphi,\varphi)+ (h(x,t),k(x,t)),
\end{equation}

where 
\begin{equation}
    \alpha(t) := \frac{ \Re\displaystyle\int  e^{-i\theta(t)}u(x+x(t),t) \Delta\varphi +  e^{-i\tilde\theta(t)}v(x+x(t),t) \Delta\varphi \, dx}{K(\varphi,\varphi)}-1.
\end{equation}
\end{proof}

We then have the following bounds on the parameters:

\begin{prop} Let $(u,v)$ be a solution to \eqref{sys_NLS} satisfying $M(u,v) = M(\phi,\psi)$ and $E(u,v) = E(\phi,\psi)$. Then, taking a smaller $\delta_0$, if necessary, one has, for $t \in I_0$:
\begin{equation}\label{mod_approx_small}
    |\alpha(t)| \approx \left| \int \phi h_1(t) + \psi k_1(t) \right| \approx \|(h(t),k(t))\|_{H^1 \times H^1} \approx \delta(t).
\end{equation}
One also has
\begin{equation}\label{mod_lesssim_small}
    |\alpha'(t)| + |x'(t)| + \|\phi\|_{L^2}^2|\theta'(t)-1|+\|\psi\|_{L^2}^2|\tilde\theta'(t)-1| \lesssim \delta(t).
\end{equation}
\end{prop}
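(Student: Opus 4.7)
\textbf{Plan for \eqref{mod_approx_small}.} My approach is variational. Substituting the decomposition from Proposition \ref{prop_mod_ort} into the mass conservation identity $M(u,v) = M(\phi,\psi)$, and using the phase/translation invariance of the mass, gives
\begin{equation}
(2\alpha+\alpha^2)M(\phi,\psi) + 2(1+\alpha)\int(\phi h_1 + \psi k_1) + M(h,k) = 0,
\end{equation}
so $\alpha\, M(\phi,\psi) = -\int(\phi h_1 + \psi k_1) + O\bigl((|\alpha|+\|(h,k)\|_{H^1\times H^1})^2\bigr)$. Since $(\phi,\psi)$ is a critical point of $I := E + M/2$, Taylor-expanding $I$ at $(\phi,\psi)$ and using $I(u,v) = I(\phi,\psi)$ yields $\Phi(\alpha\phi + h,\alpha\psi + k) = O\bigl((|\alpha|+\|(h,k)\|_{H^1\times H^1})^3\bigr)$. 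A direct computation gives $L_R(\phi,\psi) = -2(\phi^3+\beta\psi^2\phi,\psi^3+\beta\phi^2\psi)$; combining this with the ground state equation \eqref{sys_def_ground}, the orthogonality \eqref{sub_o2}, the Pohozaev relations \eqref{pohozaev}, and the mass identity above rearranges the Taylor expansion into $\Phi(h,k) = 2\alpha^2 M(\phi,\psi) + O(\cdot)$. Lemma \ref{lem_coerc} then gives $\|(h,k)\|_{H^1\times H^1}^2 \lesssim \alpha^2$, while Cauchy--Schwarz in the mass expansion supplies the reverse bound $|\alpha| \lesssim \|(h,k)\|_{H^1\times H^1}$. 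Finally, expanding $K(u,v)$ and using \eqref{sub_o2} to eliminate the linear-in-$(h,k)$ term yields $\delta(t) = 2|\alpha|\,K(\phi,\psi) + O(\cdot)$, closing the chain of equivalences.

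\textbf{Plan for \eqref{mod_lesssim_small}.} I will substitute the modulation ansatz into \eqref{sys_NLS}, pass to the moving frame $y = x - x(t)$, and use the ground state equation to cancel the leading time-independent part. Writing $w := (h,k)$, the outcome is an equation of the schematic form
\begin{equation}
i\partial_t w + \Delta w - w + L(w) = \alpha' \Xi_\alpha + (\theta'-1) \Xi_\theta + (\tilde\theta'-1) \Xi_{\tilde\theta} + x'\cdot \Xi_x + N(w),
\end{equation}
with $\Xi_\alpha, \Xi_\theta, \Xi_{\tilde\theta}, \Xi_x$ explicit linear combinations of $\phi, \psi, \partial_j\phi, \partial_j\psi$ (scaled by $1+\alpha$ plus $(h,k)$-dependent corrections) and $N(w) = O(\|w\|_{H^1\times H^1}^2)$. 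Differentiating in $t$ each of the five (respectively six) orthogonality relations defining the decomposition, and substituting the equation for $w$, produces a linear system
\begin{equation}
A(t)\bigl(\alpha'(t),\theta'(t)-1,\tilde\theta'(t)-1,x'_1(t),x'_2(t),x'_3(t)\bigr)^T = b(t).
\end{equation}
At $\delta(t) = 0$, $A$ is block-diagonal with non-zero diagonal entries $K(\phi,\psi)$, $\|\phi\|_{L^2}^2$, $\|\psi\|_{L^2}^2$ and $\|\partial_j\phi\|_{L^2}^2+\|\partial_j\psi\|_{L^2}^2$, the cross entries vanishing by the radial symmetry of $\phi, \psi$. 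Shrinking $\delta_0$ if necessary, $A(t)$ is a small perturbation of $A(0)$, and Cramer's rule naturally produces the prefactors $\|\phi\|_{L^2}^2$ and $\|\psi\|_{L^2}^2$ displayed in \eqref{mod_lesssim_small}, handling uniformly the regimes $\beta<1$ (where one prefactor vanishes and the corresponding parameter is frozen by construction) and $\beta>1$. By step one, $|b(t)|\lesssim \|w\|_{H^1\times H^1}\lesssim \delta(t)$ once one verifies that the apparent $w$-independent contributions in $b(t)$ cancel via \eqref{sys_def_ground}.

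\textbf{Main obstacle.} The delicate step will be verifying the cancellations that lead to $|b(t)|\lesssim \|w\|_{H^1\times H^1}$: the contributions one obtains by pairing the $w$-equation with ground state quantities, which are formally $O(1)$ (being linear in $\phi,\psi$ rather than in $w$), must vanish thanks to the ground state equations and the anti-symmetry property in \eqref{prop_B}. A secondary subtlety in the $\beta>1$ regime is that the translation parameter $x$ is shared between the two components, so the translation rows of $A$ pick up cross terms of the form $\int \partial_j\varphi\,\partial_k\varphi$; these vanish for $j\neq k$ by radial symmetry, leaving $A$ effectively block-diagonal to leading order.
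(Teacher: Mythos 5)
Your proposal follows essentially the same route as the paper's proof: expansion of the conserved mass and of the functional $I=E+M/2$ (equivalently, of $\Phi$) combined with the identity $B((\phi,\psi),(h,k))=-\int\phi h_1+\psi k_1$, coercivity of $\Phi$ on $G^\perp$, and the expansion of $K$ using \eqref{sub_o2} for the first chain of equivalences, then projection of the modulation equation onto the orthogonality directions (your ``differentiate the orthogonality relations and invert $A(t)$'' is the same computation) for \eqref{mod_lesssim_small}. The only quibble is numerical: with $\Phi(\phi,\psi)=-2P(\phi,\psi)$ and the Pohozaev relation $P=4M$ one gets $\Phi(h,k)=6\alpha^2M(\phi,\psi)+O(\tilde\delta^3)$ rather than $2\alpha^2M(\phi,\psi)$, which is immaterial since only the bound $\Phi(h,k)\lesssim\alpha^2+\tilde\delta^3$ together with coercivity is used.
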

\begin{proof}
Let $\tilde{\delta} = \delta + |\alpha|+ \|(h,k)\|_{H^1 \times H^1}$. The relation $M(\phi + \alpha\phi + h,\psi + \alpha\psi + k) = M(\phi,\psi)$ gives
\begin{equation}\label{mod_small_1}
    \left||\alpha|-\frac{1}{M(\phi,\psi)}\left|\int \phi h_1 + \psi k_1\right| \, \right| = O(\tilde{\delta}^2).
\end{equation}
Moreover, the orthogonality condition \eqref{sub_o1} gives
\begin{equation}\label{mod_small_2}
    K(\phi + \alpha\phi + h,\psi + \alpha\psi + k)-K(\phi,\psi) =  2\alpha K(\phi,\psi)+\alpha^2 K(\phi,\psi) + K(h,k).
\end{equation}

Thus, by the definition of $\delta$,
\begin{equation}\label{mod_small_3}
    \delta \lesssim |\alpha| +\tilde{\delta}^2 \lesssim  \delta  +\tilde{\delta}^2.
\end{equation}

Note that the orthogonality condition \eqref{sub_o1} gives 
\begin{equation}
    \int \phi h_1 + \psi k_1 = - \int (\phi^2+\beta \psi^2)\phi h_1 +  (\psi^2+\beta \phi^2)\psi k_1.
\end{equation}
Therefore, $B((\phi,\psi),(h_1,k_1)) = - \displaystyle\int \phi h_1 + \psi k_1 $. Now, since the relations  $M(\phi + \alpha\phi + h,\psi + \alpha\psi + k) = M(\phi,\psi)$  and $E(\phi+\alpha \phi + h,\psi+\alpha \phi + k) = E(\phi, \psi)$ give
\begin{equation}
    \Phi(\alpha \phi +h, \alpha \psi +k) \lesssim |\alpha|^3 + \|(h,k)\|_{H^1\times H^1}^3,
\end{equation}
we have
\begin{equation}\label{mod_small_4}
    \alpha^2 \Phi(\phi,\psi) + 2 \alpha B((\phi,\psi),(h_1,k_1))+ \Phi(h,k) = \Phi(\alpha \phi +h, \alpha \psi +k) \lesssim \tilde{\delta}^3,
\end{equation}
or
\begin{equation}
    \Phi(h,k) \lesssim |\alpha|^2 + \tilde{\delta}^3.
\end{equation}
Finally, coercivity on $G^\perp$ implies $\Phi(h,k) \approx \|(h,k)\|_{H^1 \times H^1}^2$, showing that
\begin{equation}\label{mod_small_5}
    \|(h,k)\|_{H^1 \times H^1} \lesssim |\alpha| + \tilde{\delta}^{\frac{3}{2}}.
\end{equation}
Equations \eqref{mod_small_1}, \eqref{mod_small_2} and \eqref{mod_small_5} give $\tilde{\delta}\lesssim |\alpha|$, which in turn shows that \eqref{mod_small_1}-\eqref{mod_small_5} imply \eqref{mod_approx_small}. We not turn to the proof of \eqref{mod_lesssim_small}. 

Let $\delta^* = \delta + |\alpha'|+|x'|+|\theta'-1|+|\tilde\theta'-1|$. We write the equations for $h$ and $k$, given the decomposition \eqref{mod_decomposition}, as 
\begin{equation}
    \begin{cases}
    i \partial_t h + \Delta h +i\alpha' \phi - (\theta'-1) \phi - i \nabla \phi \cdot x'  = O( \delta + \delta \delta^*)\\
    i \partial_t k + \Delta h +i\alpha' \psi - (\tilde\theta'-1) \psi - i \nabla \psi \cdot x' = O( \delta + \delta \delta^*)
    \end{cases}.
\end{equation}
Projecting the pair of equations above against $(\phi,0)$, $(0,\psi)$, $(\partial_j \phi, \partial_j \psi)$, $1 \leq j \leq 3$,  $(\Delta \phi, \Delta \psi)$ and integrating by parts yields 
\begin{equation}
   |\alpha'|+|x'|+\|\phi\|_{L^2}^2|\theta'(t)-1|+\|\psi\|_{L^2}^2|\tilde\theta'(t)-1|\lesssim \delta + \delta \delta^*,
\end{equation}
which implies \eqref{mod_lesssim_small}.
\end{proof}
The following lemma gives a simple criteria to show the convergence of the modulation parameters. It reflects a strong modulational stability around the ground states.
\begin{lemma}\label{const_modul} Let $(u,v)$ be a global solution to \eqref{sys_NLS} such that $M(u,v) = M(\phi,\psi)$ and $E(u,v) = E(\phi,\psi)$. If 
\begin{equation}\label{delta_int_exp}
    \int_0^\infty \delta(s) \, ds < +\infty,
\end{equation}
then there exist $\theta_0$, $x_0$ and $(\phi_0,\psi_0)$ such that 
\begin{equation}\label{const_mod_param}
    \|(e^{-i\theta_0}u(x+x_0,t),e^{-i\theta_1}v(x+x_0,t) - (e^{it}\phi_0,e^{it}\psi_0)\|_{H^1 \times H^1} \lesssim  \int_t^\infty \delta(s) \, ds.
\end{equation}
In other words, up to constant symmetries of the NLS system \eqref{sys_NLS}, the flow $(u(t),v(t))$ approaches the standing wave $(e^{it}\phi_0,e^{it}\psi_0)$ as $t \to +\infty$.
\begin{proof}
We first show that $|\alpha(t)| \approx\delta(t) \to 0$ as $t \to +\infty$. If not, then there exist $\eta>0$ and two unbounded sequences of times, $\{t_n\}$ and $\{t_n'\}$, such that $t_n < t_n'$ for all $n$, $\delta(t) < \delta_0$ for $t \in [t_n,t_n']$, $\delta(t_n) \to 0$ and $\delta(t_n') = \eta$. For all $n$, $\alpha(t)$ is defined for $t \in [t_n,t_n']$ and, by 
\eqref{mod_lesssim_small} and \eqref{delta_int_exp}, it satisfies
\begin{equation}
    |\alpha(t_n')-\alpha(t_n)| \leq \int_{t_n}^{t_n'} |\alpha'(t)| \, dt \lesssim \int_{t_n}^{+\infty} \delta(t) \, dt\to 0,
\end{equation}
which contradicts $\delta(t_n) < \eta/2$ and $\delta(t_n') = \eta $ for all $n$.

Therefore, if $t_0>0$ is large enough, then $\delta(t) < \delta_0$ for $t> t_0$, so that the modulation parameters $\alpha(t)$, $\theta(t)$, $\tilde\theta(t)$ and $x(t)$, as well as the corresponding ground state $(\phi_0,\psi_0)$, are (uniquely) defined for any $t>t_0$ and satisfy
\begin{equation}
    d(t) \approx |\alpha(t)| \leq \int_t^{+\infty} |\alpha'(s)| \, ds \lesssim \int_t^{+\infty} \delta(s) \, ds,
\end{equation}
and, for $\beta >1$,
\begin{align}
    \int_{t_0}^{+\infty}|\theta'(t)-1|+|\tilde\theta'(t)-1| + |x'(t)| \, dt \lesssim \int_{t_0}^{+\infty} \delta(t) \, ds < +\infty.
\end{align}
This means that $\theta(t)-t$, $\tilde\theta(t)-t$ (with the respective changes for $\theta$ or $\tilde\theta$ in the case $0<\beta<1$) and $x(t)$ have limits, say, $\theta_0$, $\theta_1$ and $x_0$ as $t \to +\infty$. By continuity of the flow, we get \eqref{const_mod_param}.
\end{proof}

\end{lemma}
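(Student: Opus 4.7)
The plan is to use the pointwise derivative bounds $|\alpha'|+|x'|+\|\phi\|_{L^2}^2|\theta'-1|+\|\psi\|_{L^2}^2|\tilde\theta'-1| \lesssim \delta(t)$ from the preceding proposition, together with the integrability assumption $\int_0^\infty \delta < \infty$, to show that the modulation parameters converge as $t\to+\infty$ with the sharp rate $\int_t^\infty\delta(s)\,ds$. First I would establish that $\delta(t)\to 0$ as $t\to+\infty$. Arguing by contradiction: if some $\eta\in(0,\delta_0/2)$ and a sequence $t_n'\to\infty$ satisfy $\delta(t_n')\geq \eta$, then integrability combined with continuity of $\delta$ along the $H^1\times H^1$ flow allows one to pick $t_n<t_n'$ with $\delta(t_n)\to 0$ and $\delta<\delta_0$ on $[t_n,t_n']$ (for instance, take $t_n$ to be the last time before $t_n'$ at which $\delta$ crosses $\eta/2$). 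On this interval modulation is defined and $|\alpha|\approx \delta$, giving $|\alpha(t_n')-\alpha(t_n)|\gtrsim \eta$; while the derivative bound yields $|\alpha(t_n')-\alpha(t_n)|\leq \int_{t_n}^{\infty}|\alpha'(s)|\,ds\lesssim \int_{t_n}^\infty \delta(s)\,ds\to 0$, a contradiction.

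Once $\delta(t)\to 0$, choose $t_0$ with $\delta(t)<\delta_0$ for all $t\geq t_0$. Corollary \ref{cor:connected_modulation} applied to the single connected interval $[t_0,\infty)$ yields a single ground state $(\phi_0,\psi_0)\in\mathcal{G}$ together with $C^1$ modulation parameters $\alpha(t),\theta(t),\tilde\theta(t),x(t)$ valid throughout $[t_0,\infty)$. This is precisely the step where the non-uniqueness of $\mathcal{G}$ is tamed, since a single orbit of ground states now governs the whole tail. Integrating the derivative bounds on $[t,\infty)$ and using $\int_0^\infty\delta<\infty$, one concludes that $\theta(t)-t$, $\tilde\theta(t)-t$, and $x(t)$ are Cauchy at infinity with limits $\theta_0,\theta_1,x_0$, and that $\alpha(t)\to 0$. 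The fundamental theorem of calculus then gives
\[
|\alpha(t)|+|\theta(t)-t-\theta_0|+|\tilde\theta(t)-t-\theta_1|+|x(t)-x_0|\lesssim \int_t^\infty\delta(s)\,ds.
\]

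Finally, the equivalence $|\alpha(t)|\approx\delta(t)$ upgrades the above into the pointwise tail bound $\delta(t)\lesssim \int_t^\infty\delta(s)\,ds$, and hence $\|(h(t),k(t))\|_{H^1\times H^1}\lesssim \delta(t)\lesssim \int_t^\infty\delta(s)\,ds$ as well. Substituting into the decomposition \eqref{mod_decomposition_2} (respectively \eqref{mod_decomposition} in the semi-trivial case), applying the triangle inequality, and using the Lipschitz dependence on $H^1$ of the phase and translation actions near $(\theta_0,\theta_1,x_0)$, the bound \eqref{const_mod_param} follows. The hard part, in my view, is the first stage: securing that the auxiliary intervals $[t_n,t_n']$ are entirely contained in $\{\delta<\delta_0\}$, so that the modulation framework is actually available along the whole integration path. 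A subtler conceptual issue, also resolved by first proving $\delta(t)\to 0$, is that the ground state produced by Corollary \ref{cor:connected_modulation} is only locally constant on components of $\{\delta<\delta_0\}$; only after one knows that the tail is a single unbounded component can a single $(\phi_0,\psi_0)$ be identified as the asymptotic profile.
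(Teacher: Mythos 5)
Your proposal is correct and follows essentially the same route as the paper: a contradiction argument using $|\alpha|\approx\delta$ and $|\alpha'|\lesssim\delta$ to show $\delta(t)\to 0$, then modulation on the single tail interval $[t_0,\infty)$ (which fixes one ground state) and integration of the derivative bounds to obtain convergence of the parameters at rate $\int_t^\infty\delta$. The only quibble is your parenthetical choice of $t_n$ as the last crossing of $\eta/2$, which is inconsistent with the stated requirements $\delta(t_n)\to 0$ and $\delta<\delta_0$ on $[t_n,t_n']$ (one should instead take $t_n$ near times where $\delta$ is small, guaranteed by integrability, and $t_n'$ the first subsequent time $\delta$ reaches $\eta<\delta_0$); the properties you actually use are exactly those the paper asserts.
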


\section{Construction of special solutions}\label{Sec6}

In this section, we prove the existence and uniqueness of special solutions to the NLS system \eqref{sys_NLS} at the mass-energy threshold that exponentially approach the standing waves $(e^{it}\phi,e^{it}\psi)$.
\subsection{Preliminary estimates}

We start with some estimates for the linearized equation.
\begin{lemma}\label{prel_estimates} For any ground state $Q=(\phi,\psi)$ and any time interval $I$ such that $|I|\leq 1$, we have
\begin{itemize}
    \item $\|\langle\nabla\rangle L(w)\|_{S'(L^2,I)\times S'(L^2,I)} \lesssim |I|^{\frac{1}{2}}\|\langle\nabla\rangle w\|_{S(L^2,I)\times S(L^2,I)}$,
    \item $\|\langle\nabla\rangle (R(v+w)-R(v+\tilde{w}))\|_{S'(L^2,I)\times S'(L^2,I)}$  
    
    $\quad\quad\lesssim \left[\|\langle\nabla\rangle v\|_{S(L^2,I)\times S(L^2,I)}
    +
    \|\langle\nabla\rangle w\|_{S(L^2,I)\times S(L^2,I)}
    +
    \|\langle\nabla\rangle \tilde{w}\|_{S(L^2,I)\times S(L^2,I)}]\right.$
    
    $
    \,\quad\quad+\left.\|\langle\nabla\rangle v\|_{S(L^2,I)\times S(L^2,I)}^2
    +
    \|\langle\nabla\rangle w\|_{S(L^2,I)\times S(L^2,I)}^2
    +
    \|\langle\nabla\rangle \tilde{w}\|_{S(L^2,I)\times S(L^2,I)}^2\right]\|\langle\nabla\rangle(w-\tilde{w})\|.
    $
\end{itemize}
where $L$ and $R$ were defined in Definition \ref{defilinearop}.
\end{lemma}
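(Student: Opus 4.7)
The plan is to prove both estimates by expanding $L(w)$ and the difference $R(v+w)-R(v+\tilde{w})$ into finite sums of monomials, applying H\"older's inequality, and using two basic facts: the ground-state components $\phi,\psi$ are smooth and exponentially decaying (they solve the elliptic system \eqref{sys_def_ground}), so they and all their derivatives lie in every $L^p(\Real^3)$; and the 3D Sobolev embedding $H^1(\Real^3)\hookrightarrow L^6(\Real^3)$ allows any $H^1$-factor to be placed into $L^\infty_I L^6_x \subset S(L^2,I)$. Throughout, the $L^2$-admissible dual pair $(q',r')=(2,6/5)$ (dual to $(q,r)=(2,6)$) will suffice.

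For the first estimate, $L(w)$ is linear in $w$ with coefficients that are quadratic monomials in $\phi$ and $\psi$. A representative term is $\phi^2 w$. H\"older in space gives $\|\phi^2 w\|_{L^{6/5}_x}\leq\|\phi\|_{L^6}^{2}\|w\|_{L^2_x}$, and H\"older in time then yields
\begin{equation}
\|\phi^2 w\|_{L^2_I L^{6/5}_x}\leq |I|^{1/2}\|\phi\|_{L^6}^{2}\|w\|_{L^\infty_I L^2_x}\lesssim |I|^{1/2}\|w\|_{S(L^2,I)}.
\end{equation}
To include $\langle\nabla\rangle$, the derivative is distributed by Leibniz, $\nabla(\phi^2 w)=2\phi(\nabla\phi)w+\phi^2\nabla w$, and each piece is estimated as above, using $\nabla\phi \in L^p$ for all $p$. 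All remaining monomials of $L(w)$ (those with $\phi\psi$ or $\psi^2$ coefficients, and possibly $\bar w$ in place of $w$) are handled identically, producing the $|I|^{1/2}$ factor.

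For the second estimate, the strategy is to write $R(v+w)-R(v+\tilde{w})$ so that $w-\tilde{w}$ appears as a common factor. Inspecting Definition \ref{defilinearop}, each monomial of $R$ is either \emph{quadratic} in its argument with a single factor of $\phi$ or $\psi$ (e.g.\ $\phi h^2$, $\beta\psi hk$), or \emph{cubic} with no ground-state factor (e.g.\ $|h|^2h$, $\beta|k|^2 h$). Using the identity $A^2-B^2=(A+B)(A-B)$ (and its conjugate analogues) for the quadratic group, and the standard expansion of $|A|^2 A-|B|^2 B$ for the cubic group, one factors $w-\tilde{w}$ out of every monomial. The remaining polynomial factor is linear in $v,w,\tilde{w}$ in the first case and quadratic in the second; this is precisely the structure of the linear plus quadratic bracket on the right-hand side of the stated bound.

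Each resulting monomial is then estimated in $S'(L^2,I)\times S'(L^2,I)$ by H\"older with the pair $(2,6/5)$: the factors $\phi,\psi$ are placed in a convenient $L^p$, the polynomial factors in $v,w,\tilde{w}$ are placed in $L^\infty_I L^6_x$ (via $H^1\hookrightarrow L^6$), and the factor $w-\tilde{w}$ is placed in $L^2_I L^6_x$; all three types of norms are controlled by the corresponding $\|\langle\nabla\rangle\cdot\|_{S(L^2,I)}$. The derivative $\langle\nabla\rangle$ is distributed via Leibniz (or its fractional analogue for the $|\nabla|^{1/2}$-type piece), and each resulting term is estimated as above. The only genuine obstacle is combinatorial: cataloguing the monomials of $R$ and verifying, case by case, that the chosen H\"older configuration is admissible. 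Once the algebraic decomposition is in place, no individual step is delicate, and each resulting inequality is an immediate consequence of H\"older and Sobolev.
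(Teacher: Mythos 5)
Your overall strategy is exactly the paper's: the authors likewise reduce both bounds to trilinear monomials and dispose of them with H\"older and Sobolev against a dual Strichartz pair, recording only the two model estimates
\begin{equation}
\|fgh\|_{L^2_IL^{6/5}_x}\lesssim |I|^{1/2}\|\nabla f\|_{L^\infty_IL^2_x}\|\nabla g\|_{L^\infty_IL^2_x}\|h\|_{L^\infty_IL^2_x},
\qquad
\|fgh\|_{L^2_IL^{6/5}_x}\lesssim \||\nabla|^{1/2}f\|_{L^2_IL^4_x}\||\nabla|^{1/2}g\|_{L^2_IL^4_x}\|h\|_{L^\infty_IL^2_x}.
\end{equation}
Your treatment of $L(w)$ and your factorization of $R(v+w)-R(v+\tilde w)$ into (linear)$\,\times\,(w-\tilde w)$ and (quadratic)$\,\times\,(w-\tilde w)$ pieces are correct and match the intended argument.

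One concrete step as written would fail: for the purely cubic monomials of $R$ (those of the form $fg\,(w-\tilde w)$ with $f,g\in\{v,w,\tilde w\}$ and no ground-state factor) you place all three factors in $L^6_x$. By H\"older this lands in $L^2_x$, since $1/6+1/6+1/6=1/2\neq 5/6$, and on $\Real^3$ the $L^2_x$ norm does not control the $L^{6/5}_x$ norm demanded by your chosen dual pair $(q',r')=(2,6/5)$. The repair is immediate and stays inside your toolbox: keep two factors in $L^6_x$ (via $\dot H^1\hookrightarrow L^6$) but put the third in $L^2_x$, i.e.\ use $\|fgh\|_{L^{6/5}_x}\le\|f\|_{L^6}\|g\|_{L^6}\|h\|_{L^2}$ together with H\"older in time from $L^\infty_I$ to $L^2_I$ (costing a harmless factor $|I|^{1/2}\le 1$); this is precisely the paper's first model estimate above. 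Alternatively, one may keep all three factors in $L^6_x$ but then work with the dual admissible pair $(q',r')=(1,2)$ instead of $(2,6/5)$, which is permitted since $S'(L^2)$ is an infimum over such pairs. With that correction, the case-by-case verification you describe goes through.
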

\begin{proof} By the explicit forms of $L$ and $R$, the lemma follows immediately from H\"older and Sobolev's inequalities, since, for any $f,g,h$,
\begin{align}
    \|fgh\|_{L^2_I L^{\frac{6}{5}}_x} &\lesssim |I|^{\frac{1}{2}}\|\nabla f\|_{L^\infty_I L^{2}_x}
    \|\nabla g\|_{L^\infty_I L^{2}_x}
    \|h\|_{L^\infty_I L^{2}_x}, \text{ and}\\
    \|fgh\|_{L^2_I L^{\frac{6}{5}}_x} &\lesssim \||\nabla|^{\frac{1}{2}} f\|_{L^2_I L^{4}_x} 
    \||\nabla|^{\frac{1}{2}} g\|_{L^2_I L^{4}_x}
    \|h\|_{L^\infty_I L^{2}_x}.
\end{align}
\end{proof}

\subsection{Spectral decay and regularity} Let $(F,G) \in \mathcal{S}(\mathbb R^3) \times \mathcal{S}(\mathbb R^3)$ and $\lambda \in \mathbb R$. If, given a ground state $Q=(\phi,\psi)$, $(f,g) \in H^2(\mathbb{R}^3) \times H^2(\mathbb{R}^3)$ satisfies
\begin{equation}\label{spectral_elliptic}
    \mathcal{L}(f,g) - \lambda (f,g) = (F,G),
\end{equation}
then $(f,g) \in \mathcal{S}(\mathbb R^3) \times \mathcal{S}(\mathbb R^3)$.
\begin{proof}
Defining $A$ and $B$ as 
\begin{align}
    L_R = (1-\Delta)I - A,\\
    L_I = (1-\Delta)I - B,
\end{align}
we can rewrite \eqref{spectral_elliptic} more explicitly, obtaining
\begin{align}\label{full_eq_spectral_elliptic}
    \left((1-\Delta)^2+\lambda^2\right)\begin{pmatrix}
     f_1 \\
   g_1
    \end{pmatrix}
    = B L_R\begin{pmatrix}
     f_1 \\
   g_1
    \end{pmatrix} + L_I\left\{A\begin{pmatrix}
     f_1 \\
   g_1
    \end{pmatrix}\right\}-BA\begin{pmatrix}
     f_1 \\
   g_1
    \end{pmatrix}.
\end{align}
with a similar equation for $(f_2,g_2)$. By induction, it is clear that $f,g \in H^s(\mathbb R^3)$, for any $s \geq 0$. As for the decay, let $\chi \in C^\infty_c(\mathbb{R}^3)$ and $\chi_R (x) = \chi(x/R)$, for $R \geq 1$. Let also $\tilde{\chi}\in C^\infty_c(\mathbb{R}^3)$ be such that $\tilde{\chi} \equiv 1$ on the support of $\chi$. We will show that, for any such $\chi$ and any non-negative integers $s$, $l$, one has
\begin{equation}\label{Hs_decay_ind}
    \|\chi_R f\|_{H^s} + \|\chi_R g\|_{H^s} \lesssim_{\chi,s,l} \frac{1}{R^l}.
\end{equation}
It is clear that \eqref{Hs_decay_ind} holds for $(s,l) = (3,0)$. Now, note that
\begin{align}
    \|\chi_R f_1\|_{H^{s+1}} + \|\chi_R g_1\|_{H^{s+1}} &\approx \| \left((1-\Delta)^2+\lambda^2\right)\chi_R f_1\|_{H^{s-3}}
    +\|\left((1-\Delta)^2+\lambda^2\right)\chi_R g_1\|_{H^{s-3}}
    \\&\lesssim
    \|\chi_R \left((1-\Delta)^2+\lambda^2\right) \tilde{\chi}_R f_1\|_{H^{s-3}} 
    +
   \|\chi_R \left((1-\Delta)^2+\lambda^2\right) \tilde{\chi}_R g_1\|_{H^{s-3}} \\
   &\quad+
    \|[ (1-\Delta)^2+\lambda^2;\chi_R] \tilde{\chi}_R f_1\|_{H^{s-3}}
  + \|[ (1-\Delta)^2+\lambda^2;\chi_R] \tilde{\chi}_R g_1\|_{H^{s-3}}.
\end{align}
For the first two terms on the right-hand side, the equation \eqref{full_eq_spectral_elliptic} and the (exponential) decay of the ground state and its derivatives give
\begin{align}
     \|\chi_R \hspace{-2pt}\left((1-\Delta)^2+\lambda^2\right)\hspace{-2pt} \tilde{\chi}_R f_1\|_{H^{s-3}}
     +
      \|\chi_R \hspace{-2pt}\left((1-\Delta)^2+\lambda^2\right) \hspace{-2pt}\tilde{\chi}_R g_1\|_{H^{s-3}}
      &\lesssim \frac{1}{R} \|\tilde{\chi}_R f_1\|_{H^{s-1}} 
      +
      \frac{1}{R} \|\tilde{\chi}_R g_1\|_{H^{s-1}} \\
      &\lesssim \frac{1}{R}\|\tilde{\chi}_R f_1\|_{H^{s}}
      +
      \frac{1}{R}\|\tilde{\chi}_R g_1\|_{H^{s}},
\end{align}
whilst the commutator immediately bounds the remaining terms, since an explicit calculation gives
\begin{equation}
    \|[ (1-\Delta)^2+\lambda^2;\chi_R]\|_{H^s \to H^{s-3}} \lesssim
     \frac{1}{R}.
\end{equation}
Thus, we have just showed that, if \eqref{Hs_decay_ind} holds for $(s,l)$, with $s\geq 3$, then it also holds for $(s+1,l+1)$, which completes the proof for $f_1$ and $g_1$. For $f_2$ and $g_2$, the argument is completely analogous and is ommited.
\end{proof}

\subsection{A family of approximate solutions}

Starting from an eigenfuction of $\mathcal{L}$, we now iteratively construct a family of approximate solutions to the linearized equation.

\begin{prop}\label{family_appr}  Let $A \in \Real$ and $Q=(\phi,\psi)$ be any ground state. There exists a sequence ($Z_k^A)_{l \geq 1}$ of functions in $\mathcal{S}(\Real^3) \times \mathcal{S}(\Real^3)$ such that $Z_1^A = A \mathcal{Y}_+$ and, if $l \geq 1$ and $\mathcal{V}_l^A = \sum_{j=1}^l e^{-je_0 t}Z_j^A$, then as $t \to +\infty$ we have
\begin{equation}\label{eq_sol_v}
\partial_t\mathcal{V}_l^A + \mathcal{LV}_l^A = R(\mathcal{V}_l^A)+O\left(e^{-\left(l+1\right)e_0t}\right) \text{ in }   \mathcal{S}(\Real^3)\times \mathcal{S}(\Real^3).
\end{equation}
\end{prop}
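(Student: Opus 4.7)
The plan is to construct the sequence $(Z_j^A)_{j \geq 1}$ by strong induction on $j$, choosing $Z_j^A$ at each step to kill the coefficient of $e^{-j e_0 t}$ in the residual $\partial_t \mathcal{V} + \mathcal{L}\mathcal{V} - R(\mathcal{V})$. Since $R$ is a polynomial of degree at most $3$ in $(h,k,\bar h,\bar k)$ whose coefficients are products of $\phi$ and $\psi$ (and hence Schwartz), substituting $\mathcal{V}_l^A = \sum_{j=1}^l e^{-j e_0 t} Z_j^A$ yields a finite expansion
\begin{equation}
R(\mathcal{V}_l^A) = \sum_{j=2}^{3l} e^{-j e_0 t}\, R_j\bigl(Z_1^A,\ldots,Z_{j-1}^A\bigr),
\end{equation}
where each $R_j$ is a universal polynomial expression with Schwartz coefficients, while $(\partial_t + \mathcal{L})\mathcal{V}_l^A = \sum_{j=1}^l e^{-j e_0 t}(\mathcal{L}-j e_0)Z_j^A$. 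Matching coefficients for $j \leq l$ and leaving everything with $j \geq l+1$ in the error term produces exactly \eqref{eq_sol_v}.

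For the base case I take $Z_1^A = A\,\mathcal{Y}_+$; by Lemma \ref{spectrum_L}, $(\mathcal{L}-e_0)Z_1^A = 0$, so $(\partial_t + \mathcal{L})\mathcal{V}_1^A = 0$, while $R(\mathcal{V}_1^A)$ starts at order $e^{-2e_0 t}$ since $R$ has no linear term. This is \eqref{eq_sol_v} at $l=1$. For the inductive step, assuming $Z_1^A,\ldots,Z_l^A \in \mathcal{S}\times\mathcal{S}$ have been produced so that \eqref{eq_sol_v} holds at level $l$, the $e^{-(l+1)e_0 t}$-coefficient of $(\partial_t+\mathcal{L})\mathcal{V}_l^A - R(\mathcal{V}_l^A)$ equals $-R_{l+1}(Z_1^A,\ldots,Z_l^A) \in \mathcal{S}\times\mathcal{S}$, and I define
\begin{equation}
Z_{l+1}^A := \bigl(\mathcal{L} - (l+1)e_0\bigr)^{-1} R_{l+1}\bigl(Z_1^A,\ldots,Z_l^A\bigr).
\end{equation}

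This is meaningful because Lemma \ref{spectrum_L} identifies $\sigma(\mathcal{L})$ as the union of the purely imaginary essential spectrum $\{iy:|y|\geq 1\}$ and the three real eigenvalues $\{-e_0,0,e_0\}$; hence the real number $(l+1)e_0 \geq 2e_0$ lies in the resolvent set, and $\mathcal{L}-(l+1)e_0:(H^2)^4 \to (L^2)^4$ is a bijection, giving $Z_{l+1}^A \in (H^2)^4$. The spectral regularity and decay lemma proved immediately before the statement then upgrades $Z_{l+1}^A$ to $\mathcal{S}\times\mathcal{S}$. With this choice, the $e^{-j e_0 t}$-coefficient of $(\partial_t+\mathcal{L})\mathcal{V}_{l+1}^A - R(\mathcal{V}_{l+1}^A)$ vanishes for $j=1,\ldots,l+1$, and the remainder is a finite sum of the form $\sum_{j=l+2}^{3(l+1)} e^{-j e_0 t}(\ldots)$ with Schwartz coefficients, hence $O(e^{-(l+2)e_0 t})$ in $\mathcal{S}\times\mathcal{S}$, closing the induction.

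The only conceivable obstruction to this scheme would be a resonance, i.e.\ $(l+1)e_0$ coinciding with an eigenvalue or with a point of the essential spectrum of $\mathcal{L}$, which would force a Fredholm solvability condition on $R_{l+1}$. This is precisely what Lemma \ref{spectrum_L} rules out, since $(l+1)e_0$ is real and strictly larger than $e_0$ for $l \geq 1$. Everything else amounts to a routine check that the polynomial expansion of $R(\mathcal{V}_l^A)$ produces Schwartz coefficients at each order, which follows from Leibniz and the fast decay of $\phi,\psi$.
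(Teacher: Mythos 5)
Your proposal is correct and follows essentially the same route as the paper: induction on $l$, matching the coefficient of $e^{-(l+1)e_0 t}$ in the residual, and inverting $\mathcal{L}-(l+1)e_0$ (legitimate since Lemma \ref{spectrum_L} puts the real spectrum at $\{-e_0,0,e_0\}$ and the essential spectrum on the imaginary axis), with the preceding regularity-and-decay lemma upgrading $Z_{l+1}^A$ to Schwartz class. Your observation that the order-$(l+1)$ coefficient of $R$ is unchanged when passing from $\mathcal{V}_l^A$ to $\mathcal{V}_{l+1}^A$ (because $R$ has no linear part) is exactly the point the paper uses to close the induction.
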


\begin{proof}
The sequence is constructed inductively. We omit the index $A$ throughout this proof. Define $Z_1 = A\mathcal{Y}_+$ and note that
\begin{equation}
    \partial_t \mathcal{V}_1 + \mathcal{LV}_1 -R(\mathcal{V}_1) = -R(\mathcal{V}_1).
\end{equation}
Now, since the entries of $R(f,g)$ are polynomials of degree three on $f, \overline{f},g,\overline{g}$, for any $(f,g)$, and since $\mathcal{Y}_+ \in \mathcal{S}(\Real^3)$, we conclude that 
\begin{equation}
    \partial_t \mathcal{V}_1 + \mathcal{LV}_1 -R(\mathcal{V}_1) = O(e^{-2e_0t}). 
\end{equation}

Suppose now that $\mathcal{V}_1$, $\mathcal{V}_2$, $\cdots,$ $\mathcal{V}_l$ are defined. Write
\begin{equation}\label{def_eps_l}
    \epsilon_l = \partial_t \mathcal{V}_l + \mathcal{LV}_l -  iR(\mathcal{V}_l)
\end{equation}
and note that
\begin{equation}
    \partial_t \mathcal{V}_l = -\sum_{i=1}^l j e_0 e^{-je_0t}Z_j,
\end{equation}
which allows us to write
\begin{equation}\label{epsilon_expansion}
    \epsilon_l = \sum_{i=1}^l e^{-je_0t}(-j e_0 Z_j+\mathcal{L}Z_j)-R(\mathcal{V}_l).
\end{equation}
Since $Z_j \in \mathcal{S}(\mathbb R^3)$ for all $j\leq l$, and from the explicit expression of $R$ one can rewrite the last equation as 
\begin{equation}
    \epsilon_l(x,t) = \sum_{i=1}^{l+1} e^{-je_0t}F_j(x) + O(e^{-e_0(l+2)t}),
\end{equation}
with $F_j \in \mathcal{S}(\mathbb R^3)$ for all $j$. Since $\epsilon_l = O(e^{-(l+1)e_0t})$, by the induction hypothesis, we conclude that $F_j = 0$ for $j \leq k$, allowing us to write
\begin{equation}
    \epsilon_l(x,t) = e^{-(l+1)e_0t}F_{l+1}(x) + O(e^{-e_0(l+2)t}).
\end{equation}
Define now $Z_{l+1} = -(\mathcal{L}-(l+1)e_0)^{-1}F_{l+1} \in \mathcal{S}(\mathbb R ^3) \times \mathcal{S}(\Real^3)$, which can be done since $(l+1)e_0$ does not belong to the spectrum of $\mathcal{L}$. It remains to estimate
\begin{equation}
    \epsilon_{l+1} = \epsilon_l - e^{-(l+1)e_0}F_{l+1}-i(R(\mathcal{V}_{l+1})-R(\mathcal{V}_l)).
\end{equation}
But this is clear, since $\epsilon_l - e^{-(l+1)e_0}F_{l+1} = O(e^{-(l+2)e_0t})$ and the explicit expression of $R$ gives $R(\mathcal{V}_{l+1})-R(\mathcal{V}_l) = O(e^{-(l+2)e_0t})$.
\end{proof}

\subsection{Fixed-point argument near an approximate solutions}\label{def_l_0}

We now want to find an exact solution to \eqref{linearized_eq} close to $\mathcal{V}_l$. Given a ground state $Q = (\phi,\psi)$ and recalling \eqref{linearized_eq_2} and \eqref{def_eps_l}, we want to solve
\begin{equation}
    i\partial_t w + \Delta w - w + L(w) + i[R(\mathcal{V}_l+w)-R(\mathcal{V}_l)]-i\epsilon_l=0.
\end{equation}

We now show:

\begin{prop}\label{sub_exist_UA} Given any ground state $Q=(\phi,\psi)$, there exists $l_0 > 0$ such that for any $l \geq l_0$, there exists $t_l \geq 0$ and a solution $U^A$ to \eqref{sys_NLS} such that for $t \geq t_l$, we have
\begin{equation}\label{sub_bound_UA}
\|\langle\nabla\rangle(U^A-e^{it}Q-e^{it}\mathcal{V}_{l}^A)\|_{S'(L^2,[t,+\infty))} \leq e^{-(l+\frac{1}{2})e_0 t}.
\end{equation}

Furthermore, $U^A$ is the unique solution to the NLS system \eqref{sys_NLS} satisfying \eqref{sub_bound_UA} for large $t$. Finally, $U^A$ is independent of $l$ and satisfies for large $t$,
\begin{equation}\label{sub_bound_UA2}
	\|U^A(t) - e^{it}Q - Ae^{-e_0t+it}\mathcal{Y}_+\|_{H^1} \leq e^{-2e_0 t}.
\end{equation}
\end{prop}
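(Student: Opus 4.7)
The plan is to construct $U^A$ by a Banach fixed-point argument applied to the error equation. Write $U^A(t) = e^{it}(Q + \mathcal V_l^A(t) + w(t))$; combining \eqref{linearized_eq_2} with Proposition \ref{family_appr}, the error $w$ must solve a nonlinear Schr\"odinger-type equation of the form
\begin{equation}
i\partial_t w + \Delta w - w + L(w) + \bigl[R(\mathcal V_l^A + w) - R(\mathcal V_l^A)\bigr] + \epsilon_l'(t) = 0,
\end{equation}
where the source term satisfies $\|\epsilon_l'(t)\|_{H^s \times H^s} = O(e^{-(l+1)e_0 t})$ for every $s \geq 0$ and the cubic correction $R(\mathcal V_l^A + w) - R(\mathcal V_l^A)$ is jointly quadratic in $(\mathcal V_l^A, w)$. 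To force $w\to 0$ as $t \to +\infty$, reformulate via Duhamel integrated backwards from $+\infty$:
\begin{equation}
\Phi(w)(t) := i\int_{t}^{+\infty} e^{i(t-s)(\Delta-1)}\bigl[L(w) + R(\mathcal V_l^A + w) - R(\mathcal V_l^A) + \epsilon_l'\bigr](s)\,ds.
\end{equation}

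For $T$ large, I would run the contraction in the complete metric space
\begin{equation}
E_T = \Bigl\{w : \|w\|_{E_T} := \sup_{t\geq T} e^{(l+\frac{1}{2})e_0 t}\,\|\langle\nabla\rangle w\|_{S(L^2,[t,+\infty))\times S(L^2,[t,+\infty))} \leq 1\Bigr\}.
\end{equation}
The key manipulation is to decompose $[t,+\infty) = \bigcup_{n\geq 0}[t+n,t+n+1]$ and apply Strichartz (Lemma \ref{strilem}) on each unit interval $I_n$, invoking Lemma \ref{prel_estimates}: the linear term $L(w)$ contributes the unit factor $|I_n|^{1/2}$, the cubic remainder gains smallness from $\|\langle\nabla\rangle\mathcal V_l^A\|_{S(L^2,I_n)} \lesssim e^{-e_0(t+n)}$ and from the $E_T$-bound on $w$, and the source $\epsilon_l'$ contributes $O(e^{-(l+1)e_0(t+n)})$. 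Summing the resulting geometric series in $n$ produces a total bound proportional to $e^{-(l+\frac{1}{2})e_0 t}$; taking $l \geq l_0$ sufficiently large and then $T \geq t_l$ large absorbs the implicit constants and yields both $\Phi(E_T)\subset E_T$ and contraction, hence a unique fixed point $w \in E_T$ giving the bound \eqref{sub_bound_UA}.

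For the uniqueness in the class \eqref{sub_bound_UA}, if two solutions $U_1^A$ and $U_2^A$ both satisfy the bound, their difference $W = U_1^A - U_2^A$ fits into the same Duhamel scheme with $W(t) \to 0$ as $t \to +\infty$; the contraction estimate then forces $W \equiv 0$. Independence of $l$ follows: for $l_2 > l_1 \geq l_0$, the identity
\begin{equation}
U^A - e^{it}Q - e^{it}\mathcal V_{l_1}^A = (U^A - e^{it}Q - e^{it}\mathcal V_{l_2}^A) + e^{it}\sum_{j=l_1+1}^{l_2} e^{-j e_0 t}Z_j^A
\end{equation}
shows that the $l_2$-solution also satisfies \eqref{sub_bound_UA} with index $l_1$ and must agree, by uniqueness, with the $l_1$-solution. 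Finally, \eqref{sub_bound_UA2} follows by choosing $l \geq 2$ (possibly re-enlarging $l_0$) and writing
\begin{equation}
U^A - e^{it}Q - A e^{(i-e_0)t}\mathcal Y_+ = (U^A - e^{it}Q - e^{it}\mathcal V_l^A) + e^{it}\sum_{j=2}^l e^{-j e_0 t}Z_j^A,
\end{equation}
where the first summand is controlled by \eqref{sub_bound_UA} and the second is $O(e^{-2e_0 t})$.

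The principal obstacle is that $L(w)$ is a linear but non-perturbative term on long time intervals, so the error equation cannot be treated as a small perturbation of the free Schr\"odinger equation globally in time. The unit-interval decomposition is what saves the argument: the constant loss from $L$ on each $I_n$ is beaten by the decay factor $e^{-(l+\frac{1}{2})e_0 n}$ inherited from the $E_T$-norm of $w$, but only once $l$ is chosen sufficiently large. This is precisely the role of the threshold $l_0$ in the statement, and one must carefully balance the geometric summation with the size of the forcing $\epsilon_l'$ and the exponential weight in $E_T$ to close the contraction.
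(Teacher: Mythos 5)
Your overall architecture (backwards-in-time Duhamel map, weighted Strichartz space $E_T$, interval decomposition, geometric summation, then uniqueness/independence of $l$ via the telescoping identity with $\sum_j e^{-je_0t}Z_j^A$) is the same as the paper's, and the uniqueness, $l$-independence and derivation of \eqref{sub_bound_UA2} are handled correctly. However, there is a genuine gap in the core contraction estimate, precisely at the point you flag as the "principal obstacle." With \emph{unit} intervals $I_n=[t+n,t+n+1]$, Lemma \ref{prel_estimates} gives $\|\langle\nabla\rangle L(w)\|_{S'(L^2,I_n)^2}\leq C\,|I_n|^{1/2}\|\langle\nabla\rangle w\|_{S(L^2,I_n)^2}=C\,\|\langle\nabla\rangle w\|_{S(L^2,I_n)^2}$, so summing yields a Lipschitz bound of the form $C\sum_{n\geq 0}e^{-(l+\frac12)e_0(t+n)}\|w\|_E = C\,\tfrac{1}{1-e^{-(l+\frac12)e_0}}\,e^{-(l+\frac12)e_0 t}\|w\|_E$. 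Enlarging $l$ only drives the geometric factor $\tfrac{1}{1-e^{-(l+\frac12)e_0}}$ down toward $1$; the $n=0$ term alone already carries the full constant $C$ coming from Strichartz and H\"older, which depends on $Q$ and is not at your disposal. Since $L(w)$ is linear in $w$ with a time-independent potential, neither a large $l$ nor a large $T$ produces any smallness for this term, so the map need not be a contraction and your claim that "the constant loss from $L$ on each $I_n$ is beaten by the decay factor \dots once $l$ is chosen sufficiently large" is false.

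The repair — which is what the paper does — is to decompose into intervals $I_j^\tau=[t+j\tau,t+(j+1)\tau]$ of \emph{small} length $\tau$, so that each application of Lemma \ref{prel_estimates} to $L(w)$ gains a factor $\tau^{1/2}$. The price is that the geometric ratio becomes $e^{-(l+\frac12)e_0\tau}$, close to $1$, so the sum $\tfrac{\tau^{1/2}}{1-e^{-(l+\frac12)e_0\tau}}$ blows up as $\tau\to 0$ for fixed $l$; this is exactly where the threshold $l_0$ enters. Choosing $l\geq l_0(\tau):=\tfrac{\ln 2}{e_0\tau}-\tfrac12$ forces $e^{-(l+\frac12)e_0\tau}\leq\tfrac12$, hence the sum is at most $2\tau^{1/2}$, and the total Lipschitz constant becomes $2C\tau^{1/2}$, which is made $<\tfrac12$ by fixing $\tau$ small and universal \emph{first}, then taking $l\geq l_0(\tau)$, and only then $t_l$ large to absorb the (harmless, genuinely small) contributions of the nonlinear remainder and of $\epsilon_l$. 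With this modification the rest of your argument goes through as written.
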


\begin{proof}

Given a ground state $Q = (\phi,\psi)$, we define
\begin{equation}
    \mathcal{M}(w)(t) = i \int_t^{+\infty} e^{i(t-s)(\Delta-1)}\left\{L(w) + i[R(\mathcal{V}_l+w)-R(\mathcal{V}_l)] + i\epsilon_l \right\} \, ds
\end{equation}

for $w$ in the (complete) metric space $B = B(l,t_l)$ defined as
\begin{align}
    B &:= \{ w \in E: \|w\|_E \leq 1\},\\
    E &:= \{ w \in S(L^2,[t_l+\infty))\times S(L^2,[t_l+\infty)) : \|w\|_{E}<+\infty\},\\
    \|w\|_E &:= \sup_{t \geq t_l} e^{\left(l+\frac{1}{2}\right)t}\|\langle \nabla \rangle w\|_{S(L^2,[t+\infty))\times S(L^2,[t+\infty))},
\end{align}

equipped with the metric
\begin{equation}
    \rho(w,\tilde{w}) = 
    \sup_{t \geq t_l} e^{\left(l+\frac{1}{2}\right)e_0t}
    \|w - \tilde{w}\|_E.
\end{equation}

We will show now that $\mathcal{M}$ is a contraction on $B$, if $l$ and $t_l$ are chosen to be large.

Indeed, if $w, \tilde{w} \in B$, by Strichatz's inequality,
\begin{align}
    \|\langle \nabla \rangle \mathcal{M}(w)\|_{S(L^2,[t,+\infty))\times S(L^2,[t,+\infty))}
    &\lesssim 
    \|\langle \nabla \rangle L(w)\|_{S'(L^2,[t,+\infty))\times S'(L^2,[t,+\infty))}
    \\&\quad+
    \|\langle \nabla \rangle (R(\mathcal{V}_l+w)-R(\mathcal{V}_l))\|_{S'(L^2,[t,+\infty))\times S'(L^2,[t,+\infty))}
    \\&\quad+
    \|\langle \nabla \rangle \epsilon_l\|_{S'(L^2,[t,+\infty))\times S'(L^2,[t,+\infty))} \end{align}
and 
\begin{align}
    \|\langle \nabla \rangle [\mathcal{M}(w)-\mathcal{M}(\tilde{w})]\|_{S(L^2,[t,+\infty))\times S(L^2,[t,+\infty))}
    &\lesssim 
    \|\langle \nabla \rangle L(w-\tilde{w})\|_{S'(L^2,[t,+\infty))\times S'(L^2,[t,+\infty))}
    \\&\hspace{-0.8cm}+
    \|\langle \nabla \rangle (R(\mathcal{V}_l+w)-R(\mathcal{V}_l+\tilde{w}))\|_{S'(L^2,[t,+\infty))\times S'(L^2,[t,+\infty))}.
\end{align}
By Lemma \ref{prel_estimates}, for $0< \tau < 1$, if $I^{\tau}_{j}=[t+j\tau,t+(j+1)\tau]$:
\begin{align}
     \|\langle \nabla \rangle L(w)\|_{S'(L^2,[t,+\infty))\times S'(L^2,[t,+\infty))} 
     &\leq \sum_{j=0}^\infty  \|\langle \nabla \rangle L(w)\|_{S'(L^2,I^{\tau}_{j})\times S'(L^2,I^{\tau}_{j})}
     \\&\lesssim  \sum_{j=0}^\infty \tau^{\frac{1}{2}} \| \langle \nabla \rangle w\|_{S(L^2,I^{\tau}_{j})\times S(L^2,I^{\tau}_{j})}\\
     &\lesssim \tau^{\frac{1}{2}} \sum_{j=0}^\infty  e^{-\left(j+\frac{1}{2}\right)e_0(t+j\tau)}\\
     &=    \tau^{\frac{1}{2}} \|w\|_E e^{-\left(l+\frac{1}{2}\right)e_0t}  \|w\|_E\sum_{j=0}^\infty  e^{-j\left(l+\frac{1}{2}\right)e_0\tau}\\
     &= e^{-\left(l+\frac{1}{2}\right)e_0t} 
     \frac{\tau^{\frac{1}{2}}}{1-e^{-\left(l+\frac{1}{2}\right)e_0\tau}}\|w\|_E.
\end{align}
If $l_0 := \frac{\ln 2}{e_0} \frac{1}{\tau}-\frac{1}{2}$, we have, for all $l > l_0(\tau)$,
\begin{equation}
    \|\langle \nabla \rangle L(w)\|_{S'(L^2,[t,+\infty))\times S'(L^2,[t,+\infty))} \lesssim \tau^\frac{1}{2} e^{-\left(l+\frac{1}{2}\right)e_0t}\|w\|_E.
\end{equation}
Similarly, abbreviating $S(L^2,I_j^{\tau})\times S(L^2,I_j^{\tau}) = S(L^2,I_j^{\tau})^2$ and $S'(L^2,I_j^{\tau})\times S'(L^2,I_j^{\tau}) = S'(L^2,I_j^{\tau})^2$,
\begin{align}
     &\|\langle \nabla \rangle (R(\mathcal{V}_l+w)-R(\mathcal{V}_l+\tilde{w}))\|_{S'(L^2,I_j^{\tau})^2}\\ &\quad\lesssim 
     \left[\|\langle\nabla\rangle \mathcal{V}_l\|_{S(L^2,I_j^{\tau})^2}
    +
    \|\langle\nabla\rangle w\|_{S(L^2,I_j^{\tau})^2}
    +
    \|\langle\nabla\rangle \tilde{w}\|_{S(L^2,I_j^{\tau})^2}\right.\\
    &\quad\quad+\left.\|\langle\nabla\rangle \mathcal{V}_l\|_{S(L^2,I_j^{\tau})^2}^2
    +
    \|\langle\nabla\rangle w\|_{S(L^2,I_j^{\tau})^2}^2
    +
    \|\langle\nabla\rangle \tilde{w}\|_{S(L^2,I_j^{\tau})^2}^2\right]
    \|\langle\nabla\rangle(w-\tilde{w})\|_{S(L^2,I_j^{\tau})^2}\\
    &\quad\lesssim_l\left[e^{-e_0t} + e^{-\left(l+\frac{1}{2}\right)e_0t}(\|w\|_E+\|\tilde{w}\|_E) \right]e^{-\left(l+\frac{1}{2}\right)e_0(t+j\tau)}\|w-\tilde{w}\|_E\\
    &\quad\lesssim_l e^{-\left(l+1\right)e_0t}\|w-\tilde{w}\|_E \, e^{-j\left(l+\frac{1}{2}\right)\tau},
\end{align}
we have
\begin{equation}
    \|\langle \nabla \rangle (R(\mathcal{V}_l+w)-R(\mathcal{V}_l+\tilde{w}))\|_{S'(L^2,[t,+\infty))^2} \lesssim_l e^{-\left(l+1\right)e_0t}\|w-\tilde{w}\|_E.
\end{equation}
Finally, by construction,
\begin{equation}
    \|\epsilon_l\|_{S'(L^2,[t,+\infty))^2} \lesssim_l e^{-(l+1)e_0t},
\end{equation}
which proves, for $t \geq t_l$, with $t_l>0$ large enough,
\begin{equation}
    \|\mathcal{M}(w)\|_{E} \leq \left[C \tau^{\frac{1}{2}} + C_l e^{-\left(l+\frac{1}{2}\right)e_0 t_l}\right] \leq 1,
\end{equation}
and
\begin{equation}
    \|\mathcal{M}(w)-\mathcal{M}(\tilde{w})\|_{E} \leq \left[C \tau^{\frac{1}{2}} + C_l e^{-\left(l+\frac{1}{2}\right)e_0 t_l}\right]\|w-\tilde{w}\|_E \leq \frac{1}{2}\|w-\tilde{w}\|_E.
\end{equation}

(Recall the order of choices: one first chooses a small universal $\tau>0$, then a large $l>l_0(\tau)$ and finally a large $t_l(l)$).
\end{proof}
\begin{re} We remark that the uniqueness condition still holds if, given $l>l_0$, one chooses a larger $\tilde{t}_l > t_l$. Moreover, the function $U_l := e^{it}(Q+ \mathcal{V}_l + w_l)$ is independent of $l$, since given $l'>l>l_0$, $t_{l'} > t_l$ and two solutions $w_l$ and $w_{l'}$,  respectively, one has two solutions on $B(l,t_{l'})$, namely $w_l$ restricted to $t \in [t_{l'},+\infty)$ and $\tilde{w}_l := \mathcal{V}_{l'}- \mathcal{V}_{l} + w_{l'}$, which, by uniqueness of \eqref{linearized_eq_2}, must coincide on $[t_l',+\infty)$ and by uniqueness of solutions to \eqref{sys_NLS}, must also coincide on $[t_l,+\infty)$.

\end{re}

\section{Behavior of solutions at the mass-energy threshold}\label{Sec7}
Having shown the existence of different solutions at the threshold level, we classify the possible behaviors in this setting. As in the case $\mathcal{ME}(u_0,v_0)<1$, we have different behaviors depending on whether the mass-kinetic energy is high ($\mathcal{MK}(u_0,v_0)>1)$ or low ($\mathcal{MK}(u_0,v_0)<1$). The case $\mathcal{MK}(u_0,v_0) = 1$ implies that $(u,v)$ is, up to symmetries, the standing wave $(e^{it}\phi,e^{it} \psi)$.
\subsection{High kinetic energy}

We treat the solutions such that $\mathcal{ME}(u_0,v_0)=1$ and $\mathcal{MK}(u_0,v_0)>1$ first. Up to a constant rescaling, we assume $M(u_0,v_0) = M(\phi,\psi)$, $E(u_0,v_0) = E(\phi,\psi)$ and $K(u_0,v_0) > K(\phi,\psi)$. By uniqueness of the flow, this implies $K(u(t),v(t)) > K(\phi,\psi)$ for all $t$ in the maximal interval of existence of $(u,v)$. Recalling the definition of $\delta$ \eqref{def_delta}, in this case we have
\begin{equation}
    \delta(t) = K(u(t),v(t))-K(\phi,\psi).
\end{equation}

This section is devoted to proving the following Lemma.

\begin{lemma}\label{conv_ground_high} Let $(u,v)$ be a solution to \eqref{sys_NLS} such that $M(u_0,v_0) = M(\phi,\psi)$, $E(u_0,v_0) = E(\phi,\psi)$ and $K(u_0,v_0)>K(\phi,\psi)$. Assume, in addition, that either $u_0$ is radial or has finite variance. Then either $(u,v)$ blows up in finite positive time, or there exist $x_0 \in \mathbb{R}^3$, $\theta_0 \in \mathbb{R}/2\pi \mathbb{Z}$, $(\phi_0,\psi_0) \in \mathcal{G}$ and $c>0$ such that 
\begin{equation}
    \|(u(t),v(t)) - (e^{i(\theta_0+t)}\phi_0(\cdot+x_0),e^{i(\theta_0+t)}\psi_0(\cdot+x_0))\|_{H^1 \times H^1} \lesssim e^{-ct}.
\end{equation}

Moreover, $(u,v)$ blows up in finite negative time in $H^1 \times H^1$.
\end{lemma}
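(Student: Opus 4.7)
The plan is to assume $T^*(u,v)=+\infty$ (otherwise the first alternative already holds) and follow the classical rigidity-at-threshold strategy in four stages: extract integrability of $\delta$ via a virial identity, obtain $H^1$-convergence to a standing wave via Lemma \ref{const_modul}, upgrade to exponential decay by spectral analysis of $\mathcal{L}$ together with Proposition \ref{sub_exist_UA}, and finally re-use the virial to rule out global existence in negative time.

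For the virial step, I would work with $V(t)=\int |x|^2(|u|^2+|v|^2)\,dx$ in the finite-variance case, and with a radial cutoff $V_R$ whose tail is controlled by Strauss's $|x|^{-1}$ radial embedding (for $R$ large depending on the bounded $H^1$-norm) in the radial case. The standard computation gives $V''(t)=8K(u,v)-6P(u,v)$; combined with the Pohozaev identities \eqref{pohozaev} (which yield $E(\phi,\psi)=K(\phi,\psi)/6$) and the threshold equality $E(u,v)=E(\phi,\psi)$, this simplifies to
$$V''(t)=-4\bigl(K(u(t),v(t))-K(\phi,\psi)\bigr)=-4\delta(t)\leq 0.$$
Since $V\geq 0$ is concave on $[0,+\infty)$, the tangent-line bound rules out $V'(t)<0$, so $V'\geq 0$ throughout and $\int_0^\infty \delta(s)\,ds\leq V'(0)/4<+\infty$. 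Lemma \ref{const_modul} then supplies $(\phi_0,\psi_0)\in\mathcal{G}$, $\theta_0$ and $x_0$ such that $(u,v)\to (e^{i(\theta_0+t)}\phi_0(\cdot+x_0),e^{i(\theta_0+t)}\psi_0(\cdot+x_0))$ in $H^1\times H^1$ at rate $\int_t^\infty \delta(s)\,ds$.

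To upgrade this to exponential decay, I would normalize via the system's symmetries to $\theta_0=x_0=0$ and apply Proposition \ref{prop_mod_ort} to write $(u,v)=e^{it}\bigl[(1+\alpha)(\phi_0,\psi_0)+(h,k)\bigr]$ with $\|(h,k)\|_{H^1\times H^1}\approx \delta(t)\to 0$. Decomposing the remainder along the spectral subspaces of $\mathcal{L}$ (Lemma \ref{spectrum_L}) and using \eqref{linearized_eq} together with the modulation bounds \eqref{mod_lesssim_small}, the projections $a_\pm(t)$ on $\mathcal{Y}_\pm$ should obey ODEs of the form $a_\pm'=\mp e_0\, a_\pm + O(\delta^2)$, while the component orthogonal to $\mathcal{Y}_\pm$ is controlled by $\delta$ via the coercivity of $\Phi$ (Lemma \ref{lem_coerc}). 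Since $a_\pm\to 0$, integrating the $a_-$ equation forward forces $\delta(t)\lesssim e^{-e_0 t}$. This rate places $(u,v)$ inside the uniqueness regime of Proposition \ref{sub_exist_UA}; identifying $A$ as the leading coefficient of $\mathcal{Y}_+$ and iterating through the family $\mathcal{V}^A_l$ so as to match the decay $e^{-(l+1/2)e_0 t}$ required there yields $(u,v)=U^A$ up to a time translation, hence the stated exponential convergence.

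For the backward blow-up, suppose $T_*=+\infty$. Running the preceding virial analysis on $(-\infty,0]$ and combining concavity with $V\geq 0$ forces $V'(0)\leq 0$; together with $V'(0)\geq 0$ from the forward half this gives $V'(0)=0$, and hence $\delta\equiv 0$ on $\mathbb{R}$, contradicting $\delta(0)=K(u_0,v_0)-K(\phi,\psi)>0$. \emph{The hardest step will be the exponential-decay upgrade}: the spectral analysis has to handle uniformly both the kernel structure of $\mathcal{L}$ in the case $0<\beta<1$ (semi-trivial ground states, with only one phase symmetry) and in the case $\beta>1$ (five-dimensional kernel, with two independent phase symmetries), and the identification with $U^A$ requires a bootstrap through increasingly sharp estimates to reach the decay rate demanded by Proposition \ref{sub_exist_UA}. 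The radial-case virial also requires a careful choice of cutoff radius relative to the shrinking $\delta(t)$.
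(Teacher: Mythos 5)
Your overall skeleton (virial identity $\Rightarrow \int_0^\infty\delta<\infty$ $\Rightarrow$ Lemma \ref{const_modul} $\Rightarrow$ convergence to a standing wave; time reversal for the backward blow-up) matches the paper, and your computation $V''=-4\delta$ and the concavity argument are correct. The genuine gap is in the exponential-decay upgrade, which you route through a spectral ODE analysis that does not close as sketched. First, the claimed equations $a_\pm'=\mp e_0 a_\pm+O(\delta^2)$ are not available: with the modulated ansatz of Proposition \ref{prop_mod_ort} the source term of the equation for $(h,k)$ contains $\alpha'(\phi,\psi)$, $(\theta'-1)(i\phi,i\psi)$, $x'\cdot(\nabla\phi,\nabla\psi)$, which by \eqref{mod_lesssim_small} are only $O(\delta)$, and $B((\phi,\psi),\mathcal{Y}_\mp)$ does not vanish, so the forcing in the $a_\pm$ ODEs is of the same order as the unknown; without modulation, the generalized-kernel direction $\Lambda Q$ (a Jordan block, since $L_R\Lambda Q=-2Q$) and the kernel directions are not damped by any spectral gap. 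Second, even granting such ODEs, $\delta(t)\approx|\alpha(t)|\approx\|(h,k)\|_{H^1\times H^1}$ is not controlled by $a_\pm$ alone, so ``integrating the $a_-$ equation'' cannot force $\delta\lesssim e^{-e_0t}$. The paper's mechanism is entirely different and much more elementary: the Cauchy--Schwarz/Gagliardo--Nirenberg inequality of Lemma \ref{Banica} gives $|V'(t)|\lesssim\sqrt{V(t)}\,\delta(t)$ at the threshold, and since $V$ is bounded (by monotone convergence to the ground-state variance) and $V''=-4\delta$, one gets $V''\leq-cV'$, hence $\int_t^\infty\delta(s)\,ds\lesssim e^{-ct}$ directly; the spectral machinery is reserved for Section \ref{Sec8}, where exponential convergence is already a hypothesis. (Your subsequent identification of the solution with $U^A$ is not needed for this lemma.)

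A second, smaller gap is the radial case. The paper does not run the exponential-decay argument with the truncated virial; it first uses $V_R$ to get $\int_0^\infty\delta<\infty$ and the convergence of Lemma \ref{const_modul}, then deduces from the monotonicity of $V_R$ and the finite variance of the ground state that $(u_0,v_0)$ itself has finite variance, and only then reduces to the finite-variance case where Lemma \ref{Banica} with $a=|x|^2$ applies. Moreover, controlling the truncation error $A_R$ by $2\delta(t)$ cannot be done with the Strauss lemma alone when $\delta(t)$ is small (Strauss only gives an $O(R^{-2})$ bound, not one proportional to $\delta$); the paper handles the regime $\delta(t)<\delta_0$ by writing $(u,v)=(e^{it}(\phi+h),e^{it}(\psi+k))$, using $A_R(e^{it}\phi,e^{it}\psi)=0$ and the exponential spatial decay of the ground state to get $|A_R|\lesssim(e^{-R}+\delta_0^{1/2})\delta(t)$. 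You flag this as delicate but do not supply the mechanism, and your plan to push the radial case all the way to exponential decay with $V_R$ alone would not work without the finite-variance reduction.
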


\subsubsection{Radial case}

We define $a$ to be a smooth, radial function such that
\begin{equation}
    a(x) = \begin{cases}
    |x|^2, & |x| < R\\
    0, & |x| > 3R
    \end{cases} 
\end{equation}

and that $\partial^2_r a(x) \leq 2$ for all $x\neq 0$, where $\partial_r a(x) = \frac{x \cdot \nabla a(x)}{|x|}$ is the radial derivative and $R \geq 1$.

We define the virial quantity
\begin{equation}
    V_R(t) = \int a \left[|u(t)|^2 + |v(t)|^2\right], 
\end{equation}

for which we have the identities
\begin{equation}
    V_R'(t) = 2 \Im \int \nabla a \cdot (\nabla u(t) \overline{u}(t)+\nabla v(t) \overline{v}(t))
\end{equation}
and, in the case $E(u_0,v_0) = E(\phi,\psi)$,
\begin{equation}
    V_R''(t) = -4\delta(t) + A_R(u(t),v(t)),
\end{equation}
where 
\begin{equation}\label{def_AR}
    A_R(u,v) = 2\int(\partial_r^2 a -2) \left[|\nabla u|^2+|\nabla v|^2\right] -\int(\Delta^2 a)\left[| u|^2+| v|^2\right] - \int (\Delta a-6)\left[|u|^4 + 2 \beta |uv|^2 + |v|^4\right].
\end{equation}

We now give different bounds for $A_R$, depending on $\delta$. If $\delta\geq \delta_0$, since $\partial_r^2 a -2 \leq 0$, $|\Delta^2 a| \leq \frac{1}{R^2}$ and $|\nabla a -6| \lesssim 1$, one has, by the well known radial Strauss lemma,
\begin{equation}
    A_R(u,v) \leq \frac{1}{R^2}M(u,v) + \frac{1}{R^2}M(u,v)^\frac{3}{2}\left[K(\phi,\psi) + \delta\right]^\frac{1}{2} \leq \delta,
\end{equation}

if one chooses $R = R(M(u_0,v_0),\delta_0)$ large. Now, for $\delta<\delta_0$, we recall that $A_R(e^{it}\phi,e^{it}\psi) = 0$, since the corresponding $V_R$ is constant. Therefore, writing $(u,v) = (e^{it}(\phi+h),e^{it}(\psi+k))$,
\begin{align}
       |A_R(u(t),v(t))| &= |A_R(h(t)+\phi,k(t)+\psi) - A_R(\phi,\psi)| 
       \\&\lesssim \int_{|x| \geq R} |\nabla \phi| |\nabla h(t)| +
    |\nabla \psi| |\nabla k(t)|
    + |\nabla h(t)|^2 + |\nabla k(t)|^2\\
    &\quad+\int_{|x|\geq R} |\phi||h(t)|+|\psi||k(t)|+
    |h(t)|^2+|k(t)|^2 \\ &\quad+\int_{|x|\geq R}|\phi|^3|h(t)| + |\psi|^3|k(t)| + |h(t)|^4 + |k(t)|^4 \\
    &\lesssim (e^{-R}+\delta_0^\frac{1}{2})\delta(t),
\end{align}

since $(\phi,\psi)$ and their derivatives decay as $e^{-|x|}$. Choosing a possibly larger $R$ (but still independent on time), and a possibly smaller $\delta_0$, we conclude, in any case, that
\begin{equation}
    A_R(u(t),v(t)) \leq 2\delta(t).
\end{equation}

Therefore, $V''_R(t) \leq - 2\delta(t)$, if one chooses $R$ large. With this bound on $V_R''$, we are able to show:

\begin{lemma} Let $(u,v)$ be a radial solution to \eqref{sys_NLS} such that $M(u_0,v_0) = M(\phi,\psi)$, $E(u_0,v_0) = E(\phi,\psi)$ and $K(u_0,v_0)>K(\phi,\psi)$. If $(u(t),v(t))$ is defined for all $t>0$, then $(u_0,v_0)$ has finite variance.
\end{lemma}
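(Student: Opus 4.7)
The plan is to combine the virial inequality $V_R''(t)\leq -2\delta(t)$ just established with the convergence to a standing wave provided by Lemma \ref{const_modul}, and then to pass to the limit $R\to +\infty$. First I would note that mass conservation and boundedness of $a$ give $V_R(t)\leq \|a\|_{L^\infty}M(u_0,v_0)$ uniformly in $t$. Since $V_R''\leq 0$, the derivative $V_R'$ is non-increasing; if $V_R'(t_0)<0$ for some $t_0\geq 0$, then $V_R'$ would stay strictly negative, forcing $V_R(t)\to -\infty$ and contradicting $V_R\geq 0$. Hence $V_R'(t)\geq 0$ for all $t\geq 0$, so $V_R$ is non-decreasing and bounded, admits a finite limit $V_R^{\infty}$, and $V_R'(t)\to 0$. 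Integrating the main inequality over $[0,+\infty)$ yields
\begin{equation}
    \int_0^{+\infty}\delta(s)\,ds \leq \tfrac{1}{2}V_R'(0) < +\infty.
\end{equation}

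Next, I would apply Lemma \ref{const_modul} to obtain a ground state $(\phi_0,\psi_0)\in \mathcal{G}$, phases $\theta_0,\theta_1\in \Real/2\pi\mathbb{Z}$ and a translation $x_0\in \Real^3$ such that $(u(t),v(t))$ converges in $H^1\times H^1$ to $(e^{i(t+\theta_0)}\phi_0(\cdot-x_0),e^{i(t+\theta_1)}\psi_0(\cdot-x_0))$. The radiality of $(u_0,v_0)$ propagates to all times, and together with the uniqueness of the modulation parameters from Corollary \ref{cor:connected_modulation} this forces $x_0=0$ and $(\phi_0,\psi_0)$ to be radial. Using $\bigl||f|^2-|g|^2\bigr|\leq |f-g|(|f|+|g|)$ together with Cauchy--Schwarz, the $H^1$ convergence upgrades to $V_R(t)\to \int a\,(|\phi_0|^2+|\psi_0|^2)$, so $V_R^{\infty}=\int a\,(|\phi_0|^2+|\psi_0|^2)$.

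Finally, since $a$ may be chosen with $0\leq a(x)\leq |x|^2$ everywhere (consistent with the conditions $a(x)=|x|^2$ on $|x|<R$ and $a(x)=0$ on $|x|>3R$), I would conclude the $R$-uniform bound
\begin{equation}
    V_R(0) \leq V_R^{\infty} = \int a\,(|\phi_0|^2+|\psi_0|^2) \leq \int |x|^2\,(|\phi_0|^2+|\psi_0|^2)\,dx < +\infty,
\end{equation}
the finiteness on the right coming from the exponential decay of the ground state. Passing $R\to +\infty$ and invoking monotone convergence then gives $\int |x|^2(|u_0|^2+|v_0|^2)\,dx<+\infty$, which is the desired finite variance.

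The main obstacle in this scheme is to keep the bound on $V_R^{\infty}$ uniform in $R$, which rests entirely on the limiting translation parameter $x_0$ being zero; this is where the radiality hypothesis enters in an essential way, since a nonzero $x_0$ would only give a bound $\lesssim |x_0|^2 M(\phi_0,\psi_0)+\int|x|^2(|\phi_0|^2+|\psi_0|^2)$ that is useless without further control on $x_0$. A secondary (but routine) point is to check that $V_R'$ converges to zero rather than to a positive constant, which follows immediately from $V_R$ being bounded and monotone.
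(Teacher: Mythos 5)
Your proof is correct and follows essentially the same route as the paper: monotonicity and positivity of $V_R'$ from the virial inequality, integrability of $\delta$, convergence to the standing wave via Lemma \ref{const_modul}, and the $R$-uniform bound $V_R(0)\leq \lim_t V_R(t)\leq \int |x|^2(\phi_0^2+\psi_0^2)$ followed by $R\to\infty$. Your explicit remark that radiality forces the limiting translation $x_0=0$ (needed for the uniform bound) is a detail the paper leaves implicit, but it does not change the argument.
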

\begin{proof}

We first recall that $ V_R(t)>0$ and $V_R''(t) \leq -\delta(t) < 0$ for all $t$, for sufficient large $R$. This means that $V'_R$ is decerasing and positive (otherwise $V_R(t)$ would become negative in finite time). Therefore, the limit 
\begin{equation}
    \lim_{t\to +\infty} V'_R(t)
\end{equation}
exists, and so the integral
\begin{equation}
    \int_t^{+\infty} V''_R(s) \,ds
\end{equation}
is convergent, which implies
\begin{equation}
     \int_0^{+\infty} \delta(s) \,ds < +\infty.
\end{equation}

This alone allows us to use Lemma \ref{const_modul} to conclude that there exist $\theta_0 \in \mathbb{R}/2\pi \mathbb{Z}$ such that 
\begin{equation}\label{slow_bound}
    \|(u(t),v(t)) - (e^{i(\theta_0+t)}\phi,e^{i(\theta_0+t)}\psi)\|_{H^1 \times H^1} \to 0,
\end{equation}
as $t \to +\infty$. However, we can also get quantitative bounds, which will be important later. To do this, we first note that \eqref{slow_bound} implies that the variance of $(u,v)$ is always finite, since $V_R$ is non-decreasing and, as $t \to +\infty$,
\begin{equation}
    \int_{|x|\leq R} |x|^2 \left[|u_0|^2+|v_0|^2\right] \leq  \int a \left[|u(t)|^2+|v(t)|^2\right] \to  \int a\left[\phi^2+\psi^2\right] 
\end{equation}

for all $R>0$ sufficiently large. Since the variance of $(\phi,\psi)$ is finite, due to the exponential decay, one can make $R\to +\infty$ in the last inequality to conclude that 
\begin{equation}
 \int |x|^2 \left[|u_0|^2+|v_0|^2\right] \leq \int |x|^2\left[\phi^2+\psi^2\right],
\end{equation}
which gives the desired
\end{proof}

We have then reduced the problem to treating finite-variance solutions, which is done below.

\subsubsection{Finite-variance case}

Here, there is no need to truncate the variance, so we define
\begin{equation}
    V(t) = \int |x|^2\left[|u(t)|^2 + |v(t)|^2\right], 
\end{equation}
for which we have the identities
\begin{equation}
    V'(t) = 4 \Im \int x \cdot (\nabla u(t) \overline{u}(t)+\nabla v(t) \overline{v}(t))
\end{equation}
and, in the case $E(u_0,v_0) = E(\phi,\psi)$,
\begin{equation}
    V''(t) = -4\delta(t).
\end{equation}

As in the radial case, we have that $V'$ is positive and decreasing, and again by Lemma \ref{const_mod_param}, $V$ is uniformly bounded above by the ground state variance. Now, to get the desired exponential decay on time, we make use of the following Cauchy-Schwarz-type inequality

\begin{lemma}\label{Banica} Let $a$ be smooth and $(f,g) \in H^1 \times H^1$. If $|\nabla a|f$ and $|\nabla a|g$ belong to $L^2$, and assuming
\begin{equation}
    M(f,g) = M(\phi,\psi), \quad E(f,g) = E(\phi,\psi),
\end{equation}
then
\begin{equation}
    \left(\Im \int \nabla a \cdot (\nabla f \overline{f}+\nabla g \overline{g}) \right)^2 \lesssim \delta^2(f,g) \int |\nabla a|^2(|f|^2+|g|^2).
\end{equation}
\end{lemma}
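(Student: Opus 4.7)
The plan is to use a phase-modulation trick: modify $(f,g)$ by a phase proportional to $a$ so that mass and the potential term $P$ are preserved while the kinetic energy acquires a controllable quadratic correction in the modulation parameter. For $\lambda \in \Real$, I would set $(f_\lambda, g_\lambda) := (e^{i\lambda a}f, e^{i\lambda a}g)$, which belongs to $H^1 \times H^1$ by the hypothesis $|\nabla a|f,\,|\nabla a|g \in L^2$. A direct computation gives $M(f_\lambda, g_\lambda) = M(f,g)$ and $P(f_\lambda, g_\lambda) = P(f,g)$ (the phase cancels in every term of $P$), while
\begin{equation*}
K(f_\lambda, g_\lambda) = K(f,g) + 2\lambda A + \lambda^2 B,
\end{equation*}
with $A := \Im \int \nabla a \cdot (\nabla f\, \overline{f} + \nabla g\, \overline{g})$ and $B := \int |\nabla a|^2 (|f|^2 + |g|^2)$. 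Minimizing over $\lambda$ at $\lambda^* = -A/B$ yields $K(f_{\lambda^*}, g_{\lambda^*}) = K(f,g) - A^2/B$, which is nonnegative by Cauchy--Schwarz.

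Next I would apply the sharp Gagliardo--Nirenberg inequality (Lemma \ref{gnlemma}) to $(f_{\lambda^*}, g_{\lambda^*})$, using $M(f_{\lambda^*}, g_{\lambda^*}) = M(\phi,\psi)$ together with the identity $P(f,g) = 2K(f,g) - 4E(\phi,\psi)$ coming from $E(f,g) = E(\phi,\psi)$. Inserting the explicit form \eqref{sharpc} of $c_{GN}$ and the Pohozaev identity $E(\phi,\psi) = K(\phi,\psi)/6$, the inequality reduces, after normalizing $s := K(f,g)/K(\phi,\psi)$ and $\mu := A^2/[B\,K(\phi,\psi)]$, to the scalar polynomial inequality
\begin{equation*}
3s - 1 \leq 2(s - \mu)^{3/2}, \qquad 0 \leq \mu \leq s.
\end{equation*}

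It then remains to extract $\mu \lesssim (s-1)^2$. When $s \leq 1/3$ the inequality is vacuous, but $\delta(f,g) = K(\phi,\psi) - K(f,g) \geq 2K(\phi,\psi)/3$ is then comparable to $K(\phi,\psi)$ itself, so the bare Cauchy--Schwarz $A^2 \leq B\,K(f,g)$ already suffices. When $s > 1/3$, rearranging yields $\mu \leq g(s) := s - [(3s-1)/2]^{2/3}$, and a short calculation—$g(1) = g'(1) = 0$, $g''(1) = 1/2 > 0$, $g(s)/(s-1)^2 \to 3/4$ as $s \to (1/3)^+$, and $g(s)/(s-1)^2 \to 0$ as $s \to \infty$—shows that $g(s) \leq C(s-1)^2$ uniformly on $(1/3,\infty)$. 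Translating back gives $A^2/B = \mu\,K(\phi,\psi) \lesssim \delta(f,g)^2/K(\phi,\psi)$, which is the asserted inequality, with implicit constant depending on the ground state.

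The main subtlety is the global nature of that last calculation: since no smallness of $\delta(f,g)$ is assumed in the statement, one cannot rely purely on a Taylor expansion of $g$ near $s=1$ and must verify the boundedness of $g(s)/(s-1)^2$ on both sides of $s = 1$ and in each asymptotic regime. Everything else is a direct vectorial analogue of the classical scalar argument, the coupling via $|uv|^2$ in $P(u,v)$ causing no difficulty because phase multiplication preserves it.
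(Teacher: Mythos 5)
Your argument is correct and is essentially the paper's own proof: the paper likewise applies the sharp Gagliardo--Nirenberg inequality to $(e^{i\lambda a}f,e^{i\lambda a}g)$, notes that mass and $P$ are phase-invariant while $K$ picks up the quadratic correction $2\lambda A+\lambda^2 B$, and concludes from nonnegativity of the resulting quadratic in $\lambda$ together with the bound $K(f,g)-|P(f,g)|^{2/3}c_{GN}^{-2/3}M(f,g)^{-1/3}\lesssim \delta^2(f,g)$. Your explicit verification that $g(s)/(s-1)^2$ stays bounded on all of $(1/3,\infty)$ (and the Cauchy--Schwarz fallback for $s\leq 1/3$) merely supplies the elementary calculus the paper leaves implicit.
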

\begin{proof}
By using the sharp Gagliardo-Nirenberg inequality on $(e^{i\lambda a}f, e^{i\lambda a}g)$, we get, for all $\lambda \in \mathbb R$,
\begin{equation}
    \lambda^2 \int |\nabla a|^2(|f|^2+|g|^2) + 2 \lambda \Im \int \nabla a \cdot (\nabla f \overline{f}+\nabla g \overline{g}) + 
    K(f,g)-\frac{|P(f,g)|^{\frac{2}{3}}}{c_{GN}^\frac{2}{3}M(f,g)^{\frac{1}{3}}} \geq 0.
\end{equation}
The result then follows by noting that, if $M(f,g) = M(\phi,\psi)$ and $E(f,g) = E(\phi,\psi)$, then
\begin{equation}
    K(f,g)-\frac{|P(f,g)|^{\frac{2}{3}}}{c_{GN}^\frac{2}{3}M(f,g)^{\frac{1}{3}}} = K(\phi,\psi)\left[1+\frac{\delta(f,g)}{K(\phi,\psi)}-\left|1+ \frac{3\delta(f,g)}{2K(\phi,\psi)}\right|^{\frac{2}{3}}\right] \lesssim \delta^2(f,g).
\end{equation}
\end{proof}
In particular, choosing $a(x) = |x|^2$, we obtain
\begin{equation}
    V'(t) \lesssim  \sqrt{V(t)}\, \delta(t)\lesssim - \sqrt{\int |x|^2\left[\phi^2+\psi^2\right]}\, V''(t). 
\end{equation}
In other words, there exists $c>0$ such that
\begin{equation}
    V''(t) \leq -c V'(t),
\end{equation}
which immediately implies
\begin{equation}
    \int_t^{+\infty} \delta(s) \, ds = \frac{1}{2} V'(t) \leq \frac{1}{2}V'(0) e^{-ct}
\end{equation}

In view of Lemma \ref{const_modul}, we have just proved Lemma \ref{conv_ground_high}, up to the blow up in finite negative time. But this is a consequence of $V'(0)>0$, $V''\leq 0$ and the time-reversal symmetry.

\subsection{Low kinetic energy}

We now treat the case $\mathcal{ME}(u_0,v_0) = 1$ and $\mathcal{MK}(u_0,v_0)< 1$. Again, up to scaling, we assume $M(u_0,v_0) = M(\phi,\psi)$, $E(u_0,v_0) = E(\phi,\psi)$ and $K(u_0,v_0) < K(\phi,\psi)$. In this case, $\delta(t)$ is given by
\begin{equation}
    \delta(t) = K(\phi,\psi) - K(u(t),v(t)).
\end{equation}

We then prove a result similar to Lemma \ref{conv_ground_high}:

\begin{lemma}\label{conv_ground_low} Let $(u,v)$ be a solution to \eqref{sys_NLS} such that $M(u_0,v_0) = M(\phi,\psi)$, $E(u_0,v_0) = E(\phi,\psi)$ and $K(u_0,v_0)<K(\phi,\psi)$. Then either $(u,v)$ scatters in positive time, or there exist $x_0 \in \mathbb{R}^3$, $\theta_0 \in \mathbb{R}/2\pi \mathbb{Z}$, $(\phi_0,\psi_0) \in \mathcal{G}$ and $c>0$ such that 
\begin{equation}
    \|(u(t),v(t)) - (e^{i(\theta_0+t)}\phi_0(\cdot+x_0),e^{i(\theta_0+t)}\psi_0(\cdot+x_0))\|_{H^1 \times H^1} \lesssim e^{-ct}.
\end{equation}

Moreover, $(u,v)$ is defined for all $t \in \mathbb R$ and scatters in negative time in $H^1 \times H^1$.
\end{lemma}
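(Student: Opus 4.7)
The plan is to combine a concentration-compactness argument of Kenig-Merle type with the modulation theory of Section \ref{Sec5} and the spectral analysis of the linearized operator $\mathcal{L}$ from Section \ref{Sec3}. Global existence forward is immediate: the sharp Gagliardo-Nirenberg inequality (Lemma \ref{gnlemma}) together with $E(u(t),v(t)) = E(\phi,\psi)$ and $K(u(t),v(t)) < K(\phi,\psi)$ (preserved by the flow) yields a uniform $H^1 \times H^1$ bound. Assuming $(u,v)$ does not scatter forward, I would adapt the minimal-counterexample argument of \cite{FP} to the threshold setting, using the profile decomposition of Proposition \ref{bubble_decomp}, to show that the trajectory $\{(u(t),v(t)) : t \geq 0\}$ is pre-compact in $H^1 \times H^1$ modulo the symmetries. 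Combined with Corollary \ref{modulation_quant_1}, pre-compactness at the threshold forces $\delta(t) \to 0$ as $t \to +\infty$. Proposition \ref{prop_mod_ort} then provides, for $t$ large, modulation parameters $\alpha(t), \theta(t), \tilde\theta(t), x(t)$, a ground state $(\phi_0,\psi_0) \in \mathcal{G}$ (time-independent by Corollary \ref{cor:connected_modulation}) and a remainder $(h(t),k(t))$ with $\|(h(t),k(t))\|_{H^1 \times H^1} \approx \delta(t)$.

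The crucial step is to upgrade $\delta(t) \to 0$ to exponential decay. Following the scalar strategy of \cite{DR_Thre}, I would decompose $(h(t),k(t)) = \alpha_+(t) \mathcal{Y}_+ + \alpha_-(t) \mathcal{Y}_- + \gamma(t)$ with $\alpha_\pm$ defined via $B$-projection onto $\mathcal{Y}_\mp$ and $\gamma$ in a transverse subspace where the coercivity of $\Phi$ (Lemma \ref{lem_coerc}) applies. Projecting the linearized equation \eqref{linearized_eq} onto $\mathcal{Y}_\mp$ and using the bounds \eqref{mod_lesssim_small} on the modulation parameter derivatives, the coefficients satisfy ODEs of the form
\begin{equation}
\alpha'_+(t) = -e_0 \alpha_+(t) + O(\delta(t)^{3/2}), \qquad \alpha'_-(t) = e_0 \alpha_-(t) + O(\delta(t)^{3/2}).
\end{equation}
Boundedness of $\alpha_-$ (which follows from pre-compactness) excludes the exponentially growing solution, forcing
\begin{equation}
\alpha_-(t) = -\int_t^{+\infty} e^{-e_0(s-t)} O(\delta(s)^{3/2}) \, ds.
\end{equation}
Combining this with the identity $\Phi(h,k) = 2 \alpha_+ \alpha_- B(\mathcal{Y}_+,\mathcal{Y}_-) + \Phi(\gamma)$, coercivity of $\Phi$ on the transverse component, and the approximate conservation $\Phi(h,k) = O(\|(h,k)\|_{H^1\times H^1}^3)$ coming from the exact conservation of $M$ and $E$ at their ground-state values, one closes a bootstrap to obtain $\delta(t) \lesssim e^{-ct}$ for some $c > 0$. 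Lemma \ref{const_modul} then yields the desired exponential convergence to a standing wave.

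For the backward scattering, by time-reversal of \eqref{sys_NLS} the same forward dichotomy applies in negative time. The scenario of exponential convergence to a standing wave in both time directions is ruled out as follows: the bounded solutions $\alpha_\pm$ of the ODEs above, now defined on all of $\mathbb R$, must vanish identically (a bounded solution of $y' = \pm e_0 y + $ small error is forced to be trivial), which together with coercivity and the quadratic control on $\Phi(h,k)$ forces $(h,k) \equiv 0$, so $(u,v)$ would be a pure standing wave, contradicting $K(u_0,v_0) < K(\phi,\psi)$. Hence $(u,v)$ must scatter backward. The main obstacle is the spectral analysis in the second paragraph: the possible multiplicity of ground states requires care in matching the right linearized operator $\mathcal{L}$ to the solution at each time, which is handled by the time-invariance of the modulated ground state from Corollary \ref{cor:connected_modulation} together with the non-degeneracy ingredients of Lemma \ref{spectrum_L}.
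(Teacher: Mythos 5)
Your overall architecture (global existence from sharp Gagliardo--Nirenberg, compactness modulo translation in the non-scattering case, modulation near the ground state family, exponential decay of $\delta$, then Lemma \ref{const_modul}, and time reversal for the backward direction) matches the paper's, but the engine you propose for the exponential decay of $\delta$ is different from the paper's and, as written, has two genuine gaps. First, you assert that pre-compactness of the trajectory ``forces $\delta(t)\to 0$.'' It does not: a pre-compact non-scattering trajectory at the threshold could a priori stay at a bounded distance from the ground-state family, and showing that this cannot happen is essentially the content of the lemma. In the paper, $\delta(t_n)\to 0$ along a sequence is extracted from the localized virial inequality (Corollary \ref{virial_delta}, via the mean value theorem), and the full statement $\int_t^{+\infty}\delta(s)\,ds\lesssim\delta(t)$ comes from combining the virial identity $V_R''=2\delta+A_R$ with the Cauchy--Schwarz-type bound of Lemma \ref{Banica} ($|V_R'|\lesssim R\,\delta$) and the control of the translation parameter (Lemmas \ref{compactness_lemma}, \ref{control_x} and Corollary \ref{x_bounded}); Gronwall then gives the exponential rate. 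Without some such monotonicity input, your argument cannot even start, because the modulation decomposition and the spectral projections are only defined where $\delta(t)<\delta_0$.

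Second, the ODEs you write for $\alpha_\pm$ have the wrong error size. In the modulated frame the equation for $(h,k)$ carries source terms $\alpha'Q$, $(\theta'-1)iQ$ and $x'\cdot\nabla Q$, each of size $O(\delta)$ by \eqref{mod_lesssim_small}. The $B$-projections of $iQ$ and $\nabla Q$ vanish by \eqref{prop_B}, but $B(Q,\mathcal{Y}_\pm)$ does not (indeed $\Phi(\phi,\psi)=-2P(\phi,\psi)\neq 0$), so the $\alpha'Q$ term contributes an $O(\delta)$ --- not $O(\delta^{3/2})$ --- error to the projected ODEs, and the Duhamel-from-infinity bound for $\alpha_-$ then only returns $|\alpha_-|\lesssim\sup_{s\ge t}\delta(s)$, which does not self-improve. (The paper's Section \ref{Sec8} avoids this because there the spectral decomposition is applied to the unmodulated perturbation of a solution already known to decay exponentially, so the forcing is genuinely quadratic.) The same two issues undermine your exclusion of two-sided convergence for the backward-scattering claim: the paper instead rules it out with the globally valid virial bound $\int_{-t}^{t}\delta\lesssim\delta(t)+\delta(-t)$ applied to both time directions, which forces $\delta\equiv 0$ and contradicts $K(u_0,v_0)<K(\phi,\psi)$. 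To repair your proof you would need either to import the virial/compactness machinery anyway, or to set up the modulation so that the $O(\delta)$ parameter terms are $B$-orthogonal to $\mathcal{Y}_\pm$ and separately establish that $\delta$ is eventually small.
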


The difference here is that we cannot rely on some ``trapping'' given by the variance, but instead rely on compactness, given by the scattering norm being infinite. The following two lemmata are essentially the same as in \cite{DR_Thre}, to where we refer the reader for a proof.

\begin{lemma}\label{compactness_lemma} Let $(u,v)$ be a solution to \eqref{sys_NLS} with $M(u_0,v_0) = M(\phi,\psi)$ and $E(u_0,v_0) = E(\phi,\psi)$, defined for all $t\geq0$ and which does not scatter in positive time. Then, there exists a continuous function $x: [0,+\infty) \to \mathbb R^3$ which coincides with the translation parameter defined in \eqref{mod_decomposition} on the set $\{t \,|\, \delta(t)<\delta_0\}$ and
such that the set
\begin{equation}
    \left\{ (u(x-x(t),t),v(x-x(t),t))\,|\, t \in [0,+\infty) \right\}
\end{equation}
is precompact in $H^1\times H^1$. Moreover, if $K(u_0,v_0) < K(\phi,\psi)$, we have
\begin{equation}
    \frac{x(t)}{t} \to 0, \quad \text{as } t \to +\infty.
\end{equation}
\end{lemma}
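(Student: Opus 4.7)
The plan is to adapt the construction of the ``minimal non-scattering element'' from the scalar threshold theory (as in \cite{DR_Thre}) to our vector-valued setting, carefully accounting for the possible multiplicity of ground states. Throughout, fix $(u,v)$ satisfying the hypotheses, let $I_0 = \{t\geq 0 : \delta(t) < \delta_0\}$, and on $I_0$ let $x_0(t)$ denote the translation parameter produced by Proposition \ref{prop_mod_ort}.

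\textbf{Step 1: Precompactness up to translations.} I first show that for every sequence $t_n \to +\infty$ there exist $\{y_n\} \subset \Real^3$ such that $(u(\cdot + y_n, t_n), v(\cdot + y_n, t_n))$ converges in $H^1 \times H^1$ along a subsequence. The sequence $(u(\cdot, t_n), v(\cdot, t_n))$ is bounded in $H^1 \times H^1$ by conservation of mass and energy together with $\mathcal{MK}(u_0,v_0) \leq 1$, so Proposition \ref{bubble_decomp} yields a bubble decomposition with linear profiles $(\phi^j,\psi^j)$ and translations $\{x_n^j\}$. Combining the sharp Gagliardo–Nirenberg inequality (Lemma \ref{gnlemma}), the asymptotic orthogonality of mass, kinetic energy, and the nonlinear potential $P$, with the threshold identity $\mathcal{ME}=1$, forces at most one profile to be nontrivial; any additional profile would strictly decrease the mass–energy budget available for the main one, contradicting the variational characterization. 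Thus, substituting the associated nonlinear profile (an FP-scattering solution if the profile is strictly below threshold, a modulated ground state or $Q^{\pm}$-type solution if it saturates it) and applying the long-time stability theory, the non-scattering hypothesis forces the linear error to vanish in $S(L^2)\times S(L^2)$. This produces the desired convergence along the translates $y_n := -x_n^1$.

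\textbf{Step 2: Continuous selection of $x(t)$.} On $I_0$, set $x(t) := x_0(t)$, which is continuous by the Implicit Function Theorem argument in Proposition \ref{prop_mod_ort}. Its complement $F := [0,+\infty)\setminus I_0$ is closed. On $F$, Step 1 gives a compact set $\mathcal{K} \subset H^1\times H^1$ and, for each $t \in F$, a translation $y$ with $(u(\cdot + y, t), v(\cdot + y,t)) \in \mathcal{K}$; moreover, this $y$ is unique up to an $H^1$-small perturbation because the elements of $\mathcal{K}$ are, by construction, at positive distance from the orbits of ground states (the latter being strictly inside $I_0$ by Corollary \ref{modulation_quant_1}). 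A standard continuous-selection argument, using this local uniqueness and the continuity of the flow in $H^1 \times H^1$, produces a continuous $x$ on $F$ matching $x_0$ on $\partial F$; extending by $x_0$ on $I_0$ then gives the claimed continuous function on $[0,+\infty)$.

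\textbf{Step 3: The bound $x(t)/t \to 0$.} Assume for contradiction that $|x(t_n)|/t_n \geq \eta > 0$ along some $t_n \to +\infty$. Using conservation of momentum, one may normalize $F(u,v) = 0$ by a single Galilean boost applied to the initial data, which neither changes the mass-energy threshold condition nor the non-scattering property. The compactness of Step 1 then localizes the mass distribution around $x(t)$: for every $\varepsilon > 0$ there exists $R > 0$ such that
\begin{equation}
\int_{|x-x(t)| \geq R} \bigl[|u(t)|^2 + |v(t)|^2\bigr]\, dx < \varepsilon, \quad \text{for all } t \geq 0.
\end{equation}
Testing the conservation of mass-current identity against a truncated position function $\chi_R(x) x$ (as in the center-of-mass argument of \cite{DR_Thre}) and using $F=0$ together with the above uniform localization yields $|x(t)| = o(t)$ as $t \to +\infty$, contradicting our assumption.

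The main obstacle is the single-profile reduction in Step 1, since the possible coexistence of semi-trivial and fully nontrivial ground states (within the range \eqref{betarange}) means several distinct variational characterizations must be ruled out simultaneously; this is where the sharp form of Lemma \ref{gnlemma} together with Proposition \ref{prop_mod_1} is essential. Step 3 is comparatively routine once the boosted momentum is zero and the uniform spatial localization from Step 1 is in hand.
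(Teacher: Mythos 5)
The paper does not actually prove this lemma; it defers to \cite{DR_Thre}, and your overall strategy (single-profile reduction via sub-threshold scattering plus stability, continuous selection of the translation parameter, zero-momentum normalization and a truncated centre-of-mass computation) is indeed the argument being invoked there. However, Step 1 as written has a concrete gap: Proposition \ref{bubble_decomp} is the \emph{static} bubble decomposition, with no time-translation parameters and with a remainder that is only shown to vanish in $L^4\times L^4$. That is not enough to (a) define the nonlinear profiles correctly --- a profile whose time parameter escapes to $\pm\infty$ must be handled by solving a final-value problem and automatically scatters on one side, an alternative your sketch never confronts --- nor to (b) upgrade the conclusion to precompactness in $H^1\times H^1$, since an $L^4$-small remainder need not be small in $H^1$. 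What is needed is the profile decomposition adapted to the linear Schr\"odinger evolution (the one in \cite{FP}, which the paper explicitly points to), together with the mass/energy Pythagorean expansion: once all but one profile is excluded by the sub-threshold scattering result of \cite{FP} and long-time perturbation, the decoupling of $M$ and $K$ forces the surviving remainder to vanish in $H^1\times H^1$, which is the statement actually used. Relatedly, ``any additional profile would contradict the variational characterization'' is not the right mechanism --- Proposition \ref{prop_mod_1} only applies when $\delta\to 0$; the contradiction comes from the non-scattering hypothesis.

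A smaller imprecision occurs in Step 3: a Galilean boost does change the energy (by a term linear in $\xi_0\cdot F(u,v)$ plus $|\xi_0|^2M(u,v)/2$), so one cannot ``normalize $F=0$ without changing the mass-energy threshold condition.'' The correct statement is that if $F\neq 0$, a suitable boost strictly lowers the energy below the threshold, so the boosted solution scatters by \cite{FP} and hence so does $(u,v)$ --- a contradiction; thus $F=0$ holds automatically, after which your truncated centre-of-mass argument goes through. With these two repairs the proposal reproduces the proof of \cite{DR_Thre} to which the paper appeals.
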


\begin{lemma}\label{control_x} There exists $C_0>0$ such that if {$T_1 \geq T_0+1$ \color{red}}, then
\begin{equation}
    |x(T_1)-x(T_0)| \leq C_0 \int_{T_0}^{T_1} \delta(s) \, ds.
\end{equation}
\end{lemma}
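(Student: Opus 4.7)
The plan is to partition $[T_0,T_1]$ according to whether $\delta(t)$ lies below or above the modulation threshold $\delta_0$ of Proposition \ref{prop_mod_ort}, then combine the pointwise bound $|x'(t)| \lesssim \delta(t)$ from \eqref{mod_lesssim_small} on the ``good'' subset with a compactness-based variation estimate for $x$ on the ``bad'' subset.

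On the open set $A := \{t \in [T_0,T_1] : \delta(t) < \delta_0\}$, the parameter $x(t)$ agrees with the modulation parameter of Proposition \ref{prop_mod_ort}, so \eqref{mod_lesssim_small} yields $|x'(t)| \lesssim \delta(t)$; writing $A$ as the disjoint union $\bigsqcup_j (a_j,b_j)$ of its connected components, we have $|x(b_j)-x(a_j)| \lesssim \int_{a_j}^{b_j}\delta(s)\,ds$ on each.

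The delicate piece is the closed complement $B := [T_0,T_1]\setminus A$, where $\delta(t)\geq \delta_0$ but no pointwise control on $x'$ is directly available. The substitute is a uniform variation estimate for $x$ on unit-length windows, extracted from the precompactness of Lemma \ref{compactness_lemma}. Precompactness in $H^1\times H^1$ provides, for any $\epsilon>0$, a radius $R>0$ with $\int_{|y-x(t)|>R}(|u|^2+|v|^2)\,dy<\epsilon$ for every $t\geq 0$. On the other hand, the equation together with the uniform $H^1$ bound gives $\|(u(t),v(t))-(u(s),v(s))\|_{L^2\times L^2}\lesssim |t-s|^{1/2}$, obtained by interpolating $\|\cdot\|_{H^{-1}}$ (Lipschitz in $t$, via the equation) with $\|\cdot\|_{H^1}$ (uniformly bounded). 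Choosing $\epsilon<M(u_0,v_0)/4$, a disjointness argument on the balls $B(x(t),R)$ and $B(x(s),R)$ shows that $|x(t)-x(s)|>2R$ would force the masses concentrated near $x(t)$ and $x(s)$ to sit in disjoint regions, which contradicts the $L^2$-closeness of $(u(\cdot,t),v(\cdot,t))$ and $(u(\cdot,s),v(\cdot,s))$ unless $|t-s|\gtrsim 1$. A subdivision argument then gives $|x(t)-x(s)|\leq C'$ whenever $|t-s|\leq 1$, for some universal $C'>0$.

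To close the argument, cover $[T_0,T_1]$ by unit-length subintervals $J_k$, which is legitimate precisely because $T_1-T_0\geq 1$. On any $J_k$ meeting $B$, the Lipschitz continuity of $t\mapsto K(u(t),v(t))$ (a consequence of the uniform $H^1$ bound and the equation, e.g.\ via $K=2E+P/2$ and differentiating $P$) furnishes a universal $\eta>0$ such that $\delta(t)\geq \delta_0/2$ on a sub-interval of length $\eta$, yielding $\int_{J_k}\delta \geq \delta_0\eta/2$; combined with $|x(t)-x(s)|\leq C'$ for $s,t\in J_k$, this gives $|x(t)-x(s)|\lesssim \int_{J_k}\delta(s)\,ds$. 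Summing over the unit-length intervals and over the components of $A$ yields $|x(T_1)-x(T_0)|\leq C_0\int_{T_0}^{T_1}\delta(s)\,ds$. The main obstacle is the uniform variation bound for $x$ on $B$: without the $H^1$-precompactness from Lemma \ref{compactness_lemma} coupled with the interpolation-based $L^2$-continuity of the flow, one has no leverage to rule out rapid oscillations of $x$ on intervals where modulation does not apply, and the linear-in-$\delta$ right-hand side could then fail to dominate.
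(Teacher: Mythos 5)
Your argument is correct and is essentially the standard Duyckaerts--Roudenko proof that the paper itself defers to via the citation of \cite{DR_Thre}: split into unit intervals, use $|x'|\lesssim\delta$ from \eqref{mod_lesssim_small} on intervals where $\delta<\delta_0$ throughout, and on intervals meeting $\{\delta\geq\delta_0\}$ combine the compactness-based bound $|x(t)-x(s)|\leq C'$ for $|t-s|\leq 1$ with the lower bound $\int_{J_k}\delta\gtrsim 1$. The only small overstatement is the ``Lipschitz continuity'' of $t\mapsto K(u(t),v(t))$, which is not available at $H^1$ regularity; but a uniform H\"older modulus (e.g.\ from $K=2E+P/2$, conservation of $E$, and $\|u(t)-u(s)\|_{L^4}\lesssim\|u(t)-u(s)\|_{L^2}^{1/4}\|u(t)-u(s)\|_{H^1}^{3/4}\lesssim|t-s|^{1/8}$ using your $H^{-1}$--$H^1$ interpolation) suffices to produce the uniform $\eta$ and close the argument.
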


Even though we do not have radiality here, we work with a truncated variance to make use of compactness. For $R \geq 1$, let $a$ be a smooth, positive, radial function such that
\begin{equation}
    a(x) = \begin{cases}
    |x|^2, & |x| < R\\
    2, & |x| > 3R
    \end{cases} 
\end{equation}
and that $|\nabla a|^2 \lesssim a$. As usual, define $V_R(t) = \displaystyle\int a \left[|u|^2+|v|^2\right]$ and note that $|V_R(t)| \lesssim R^2$. We also have, by the virial identities,
\begin{equation}\label{second_virial_below}
    V_R''(t) = 2 \delta(t) + A_R(u(t),v(t)),
\end{equation}
where $A_R$ is defined in \eqref{def_AR}.

\begin{lemma}\label{bound_delta_below} There exists $C\geq 1$ such that, for any $0\leq T_0 \leq t \leq T_1$, one has
\begin{align}
    |A_R(u(t),v(t))|\leq \delta(t),\label{bound_AR_below}\\
    |V'_R(t)|\lesssim  R \,\delta(t)\label{bound_FprimeR_below},
\end{align}
where $R = R(T_0,T_1,\eta) = \displaystyle\sup_{T_0\leq t \leq T_1}|x(t)| + C$.
\end{lemma}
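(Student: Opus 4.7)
The structural pillar is that the weights $\partial_r^2 a - 2$, $\Delta^2 a$ and $\Delta a - 6$ defining $A_R$ all vanish on $\{|x| < R\}$, so the integrand of $A_R$ is supported in $\{|x| > R\}$. Because $R \ge \sup_{[T_0,T_1]}|x(t)| + C$, this far region is mapped after shifting by $x(t)$ into $\{|y| > C\}$, where compactness of the orbit and exponential decay of the ground state will produce smallness. My plan is to establish \eqref{bound_AR_below} via a dichotomy on the size of $\delta(t)$ relative to the modulation threshold $\delta_0$ of Proposition \ref{prop_mod_ort}, and to deduce \eqref{bound_FprimeR_below} as a short consequence of Lemma \ref{Banica}.

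For \eqref{bound_AR_below} in the regime $\delta(t) \ge \delta_0$, I invoke Lemma \ref{compactness_lemma}: since $\{(u(\cdot - x(t),t), v(\cdot - x(t),t))\}_t$ is precompact in $H^1 \times H^1$ (hence also in $L^4 \times L^4$), for any prescribed $\eta > 0$ one can fix $C$ so that the $L^2$, $\dot H^1$ and $L^4$ tails of both components outside $\{|y| > C\}$ are at most $\eta$ uniformly in $t$. Changing variables $x = y + x(t)$ transports this bound to $\{|x| > R\}$, so that, using $|\partial_r^2 a - 2| \lesssim 1$, $|\Delta^2 a| \lesssim R^{-2}$ and $|\Delta a - 6| \lesssim 1$, we conclude $|A_R(u(t),v(t))| \lesssim \eta$. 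Selecting $\eta = c\,\delta_0$ with $c$ small produces $|A_R| \le \delta_0 \le \delta(t)$.

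In the regime $\delta(t) < \delta_0$, apply Proposition \ref{prop_mod_ort} to write $(u(t),v(t)) = (1+\alpha(t))\,Q_{\text{trans}}(t) + (h(t),k(t))$, where $Q_{\text{trans}}$ is a phase-rotated, space-translated ground state and $|\alpha| + \|(h,k)\|_{H^1 \times H^1} \lesssim \delta(t)$. The key cancellation is that $A_R(\phi(\cdot - y), \psi(\cdot - y)) = 0$ for every $y \in \Real^3$: indeed $(e^{it}\phi(\cdot - y), e^{it}\psi(\cdot - y))$ is itself a standing-wave solution with $V_R$ constant and $\delta = 0$, forcing $A_R = 0$ through the virial identity. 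Splitting $A_R = A_R^{(2)} + A_R^{(4)}$ into its quadratic and quartic parts, this identity converts
\[
A_R((1+\alpha)\,Q_{\text{trans}}) = \bigl[(1+\alpha)^2 - (1+\alpha)^4\bigr]\,A_R^{(2)}(Q_{\text{trans}})
\]
into an $O(\alpha)$ factor times $A_R^{(2)}(Q_{\text{trans}})$, which is exponentially small in $C$ by the ground-state decay on $\{|x - x(t)| > C\}$. The cross terms linking $Q_{\text{trans}}$ to $(h,k)$ produce contributions of order $e^{-C/2}\|(h,k)\|_{H^1}$ (linear, via Cauchy--Schwarz and Hölder on $\{|x|>R\}$) and $\|(h,k)\|_{H^1}^2$ and higher. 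Summing, $|A_R(u(t),v(t))| \lesssim (e^{-C/2} + \delta_0)\,\delta(t)$, and final choices of $C$ large and $\delta_0$ small absorb the implicit constant. \emph{This is the main obstacle}: one must combine the exact cancellation on translated ground states, the quadratic-quartic decomposition, and the quantitative modulation bounds all at once, whereas the other pieces reduce to compactness or a soft Gagliardo-Nirenberg argument.

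Finally, \eqref{bound_FprimeR_below} follows from a direct application of Lemma \ref{Banica}, whose hypotheses only require the mass-energy equalities (and are valid in both the low- and high-kinetic regimes), yielding
\begin{equation}
|V_R'(t)|^2 \lesssim \delta(t)^2 \int |\nabla a|^2\bigl(|u(t)|^2 + |v(t)|^2\bigr).
\end{equation}
The defining properties $|\nabla a|^2 \lesssim a$ and $a \lesssim R^2$ bound the right-hand side by $\delta(t)^2 V_R(t) \lesssim R^2 M(u_0,v_0)\,\delta(t)^2$, and taking the square root completes the estimate.
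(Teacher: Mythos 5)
Your proposal is correct and follows essentially the same route as the paper: the same dichotomy on $\delta(t)\gtrless\delta_0$, with compactness from Lemma \ref{compactness_lemma} handling the large-$\delta$ regime, the cancellation $A_R$ on translated standing waves (via the virial identity) combined with the modulation bounds $|\alpha|+\|(h,k)\|_{H^1\times H^1}\lesssim\delta(t)$ handling the small-$\delta$ regime, and Lemma \ref{Banica} together with $|\nabla a|^2\lesssim a\lesssim R^2$ giving \eqref{bound_FprimeR_below}. Your quadratic/quartic splitting of $A_R$ is just a more explicit bookkeeping of the difference estimate the paper performs directly.
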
 
\begin{proof}
As in the previous section, we have, for $\delta(t) < \delta_0$, upon writing 
\begin{equation}
    (u(x,t),v(x,t)) = \left(e^{i\theta(t)}[\phi(x-x(t))+h(x-x(t),t)],e^{i\theta(t)}[\psi(x-x(t))+k(x-x(t),t)]\right).
\end{equation} 
we have, for any (fixed) $t \in [a,b]$,
\begin{align}
       |A_R(u(t),v(t))| &= |A_{R}(h(\cdot-x(t),t)+\phi,k(\cdot-x(t),t)+\psi) - A_{R}(\phi(\cdot-x(t)),\psi(\cdot-x(t)))| 
       \\&\lesssim \int_{|x+x(t)| \geq R} |\nabla \phi| |\nabla h(t)| +
    |\nabla \psi| |\nabla k(t)|
    + |\nabla h(t)|^2 + |\nabla k(t)|^2\\
    &\quad+\int_{|x+x(t)|\geq R} |\phi||h(t)|+|\psi||k(t)|+
    |h(t)|^2+|k(t)|^2 \\ &\quad+\int_{|x+x(t)|\geq R}|\phi|^3|h(t)| + |\psi|^3|k(t)| + |h(t)|^4 + |k(t)|^4 \\
    &\lesssim (e^{-(R-x(t))}+\delta_0^\frac{1}{2})\delta(t)\\
    &\lesssim (e^{-C}+\delta_0^\frac{1}{2})\delta(t).
\end{align}

Choosing a smaller $\delta_0$, if necessary, and a large enough $C\geq 1$, equation \eqref{bound_AR_below} is proved in the case $\delta(t)<\delta_0$. If $\delta(t) \geq \delta_0$ and $t \in [a,b]$, we write
\begin{align}
    |A_R(u(t),v(t))| &\lesssim \int_{|x|\geq R } \left[|\nabla u|^2+|\nabla v|^2 +| u|^2+| v|^2 + |u|^4 +|v|^4\right]\\
    &\lesssim \int_{|x-x(t)|\geq R-|x(t)| } \left[|\nabla u|^2+|\nabla v|^2 +| u|^2+|u|^4 +|v|^4\right]\\
    &\lesssim \int_{|x-x(t)|\geq C } \left[|\nabla u|^2+|\nabla v|^2 +| u|^2+ |u|^4 +|v|^4\right].\\
\end{align}
Thus, by possibly increasing $C\geq 1$, we have, by compactness, $|A_R(u(t),v(t))|\leq \delta_0 \leq \delta(t)$.

To get \eqref{bound_FprimeR_below}, we make use of Lemma \ref{Banica} and the definition of $a$ to write
\begin{equation}
    |V_R'(t)| \lesssim \sqrt{V_R(t)} \delta(t) \lesssim R \delta(t),
\end{equation}
which proves the desired.
\end{proof}

\begin{cor}\label{virial_delta} There exists $C_1>0$ such that, for any $0\leq T_0\leq t \leq  T_1$, one has
\begin{equation}\label{estimate_virial_delta}
\int_{T_0}^{T_1} \delta(s) \, ds \leq C_1 \left[1+\sup_{T_0\leq t \leq T_1}|x(t)|\right](\delta(T_1)+\delta(T_0)) =  \,o_{T_0}(T_1),
\end{equation}
where $o_{T_0}(T_1)/T_1 \to 0$ as $T_1 \to +\infty$ and $T_0$ is fixed. In particular, there exists a sequence $\{t_n\}_{n = 0}^{\infty}$ with $t_{n+1}\geq t_n+1$ and such that $\delta(t_n) \leq \displaystyle\frac{1}{2^{n+2}C_0C_1}$ for all $n$, where $C_0$ is defined in Lemma \ref{control_x}.
\end{cor}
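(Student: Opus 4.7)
The plan is to integrate the truncated virial identity \eqref{second_virial_below} and combine the resulting estimate with the sublinear growth $|x(t)|/t \to 0$ from Lemma \ref{compactness_lemma} to get the $o_{T_0}(T_1)$ bound. A soft pigeonhole argument then produces the sequence $\{t_n\}$.

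First, I would fix $T_0 \leq T_1$, set $R = \sup_{T_0 \leq t \leq T_1}|x(t)| + C$ with $C$ as in Lemma \ref{bound_delta_below}, and integrate \eqref{second_virial_below} from $T_0$ to $T_1$:
\begin{equation*}
V_R'(T_1) - V_R'(T_0) = 2\int_{T_0}^{T_1}\delta(s)\,ds + \int_{T_0}^{T_1} A_R(u(s),v(s))\,ds.
\end{equation*}
The bound \eqref{bound_AR_below} lets me absorb the second term on the right into half of the first, while \eqref{bound_FprimeR_below} controls the endpoint values of $V_R'$ by $R(\delta(T_0) + \delta(T_1))$. Recalling the choice of $R$, this yields
\begin{equation*}
\int_{T_0}^{T_1}\delta(s)\,ds \lesssim \bigl(1 + \sup_{T_0 \leq t \leq T_1}|x(t)|\bigr)\bigl(\delta(T_0) + \delta(T_1)\bigr),
\end{equation*}
which is the first inequality of \eqref{estimate_virial_delta} for an appropriate $C_1>0$.

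Second, for the $o_{T_0}(T_1)$ statement: $\delta(t)$ is bounded uniformly by $K(\phi,\psi)$, and Lemma \ref{compactness_lemma} implies $\sup_{T_0 \leq t \leq T_1}|x(t)|/T_1 \to 0$ as $T_1 \to +\infty$ for fixed $T_0$. Multiplying by the bounded factor $\delta(T_0) + \delta(T_1)$ shows that the right-hand side above is $o(T_1)$.

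Third, for the sequence $\{t_n\}$: I claim that for every $\eta>0$ there are arbitrarily large times $t$ with $\delta(t)\leq \eta$. Indeed, if instead $\delta(t)>\eta$ on some half-line $[T,+\infty)$, then $\int_T^{T_1}\delta(s)\,ds \geq \eta(T_1-T)$, contradicting the $o_T(T_1)$ bound just proved. Applying this recursively with $\eta_n = 1/(2^{n+2}C_0 C_1)$, starting at some $t_0$ with $\delta(t_0)\leq 1/(4C_0 C_1)$, I construct $\{t_n\}$ with $t_{n+1}\geq t_n+1$ and $\delta(t_n)\leq 1/(2^{n+2}C_0 C_1)$ for all $n\geq 0$. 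The main obstacle is the first step: it is critical that $A_R$ is controlled by $\delta$ itself (Lemma \ref{bound_delta_below}), and not merely by a fixed constant, so that its integral can be absorbed into the $\delta$-integral — otherwise an additive $O(T_1-T_0)$ error would spoil the estimate. Everything else is a standard virial-plus-pigeonhole argument.
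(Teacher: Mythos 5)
Your proof is correct and follows essentially the same route as the paper: integrate the truncated virial identity, absorb $A_R$ into the $\delta$-integral via Lemma \ref{bound_delta_below}, bound the endpoint terms $|V_R'|$ by $R\,\delta$, and invoke the sublinear growth of $x(t)$ for the $o_{T_0}(T_1)$ claim. The only cosmetic difference is that you extract the sequence $\{t_n\}$ by a contradiction/pigeonhole argument where the paper cites the mean value theorem for integrals; these are interchangeable.
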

\begin{proof}
By equations \eqref{second_virial_below}, \eqref{bound_AR_below} and \eqref{bound_FprimeR_below}, one has
\begin{equation}
    \int_{T_0}^{T_1} \delta(s) \, ds \leq  \int_{T_0}^{T_1} V_R''(s) \, ds \leq |V_R'(T_1)|+|V_R'(T_0)| \lesssim \left[1+\sup_{T_0\leq t \leq T_1}|x(t)|\right](\delta(T_1)+\delta(T_0)).
\end{equation}
Recalling that $\delta(t) \leq K(\phi,\psi)$ for all $t$ and that $x(t) = o_1(t)$ (by Lemma \ref{compactness_lemma}), we get \eqref{estimate_virial_delta}. The existence of the sequence $\{t_n\}$ comes from the mean value theorem. 
\end{proof}

\begin{cor}\label{x_bounded} We have
\begin{equation}
\sup_{t \in [0,+\infty)}|x(t)| \lesssim 1 + \sup_{t_0\leq s \leq t_1}|x(t)| < +\infty
\end{equation}
\end{cor}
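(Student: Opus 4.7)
My plan is to exploit the sequence $\{t_n\}$ produced by Corollary \ref{virial_delta}, along which $\delta(t_n)$ decays geometrically. Since $t_{n+1} \geq t_n + 1$, Lemma \ref{control_x} applies on each interval $[t_n, t_{n+1}]$, yielding, for every $t \in [t_n, t_{n+1}]$,
\begin{equation}
    |x(t) - x(t_n)| \leq C_0 \int_{t_n}^{t_{n+1}} \delta(s)\,ds.
\end{equation}
Inserting \eqref{estimate_virial_delta} together with the bound $\delta(t_n), \delta(t_{n+1}) \leq (2^{n+2} C_0 C_1)^{-1}$, I obtain
\begin{equation}
    |x(t) - x(t_n)| \leq \alpha_n \bigl[1 + \sup_{t_n \leq s \leq t_{n+1}} |x(s)|\bigr], \qquad \alpha_n := \tfrac{3}{2^{n+3}}.
\end{equation}

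The second step is a bootstrap on the quantity $M_n := \sup_{t_0 \leq s \leq t_n}|x(s)|$, which is finite for every fixed $n$ by continuity of $x$ on compact time intervals. Taking the supremum over $t \in [t_n, t_{n+1}]$ in the display above and using $M_{n+1} = \max\bigl(M_n,\ \sup_{[t_n,t_{n+1}]}|x|\bigr)$, one deduces
\begin{equation}
    M_{n+1}(1 - \alpha_n) \leq M_n + \alpha_n, \qquad n \geq 0.
\end{equation}
Since $\alpha_n \leq 3/8 < 1$ for all $n$, this recursion is well-defined, and iterating gives
\begin{equation}
    M_n \leq \Bigl(\prod_{k=0}^{n-1} \frac{1}{1-\alpha_k}\Bigr) M_0 + \sum_{k=0}^{n-1} \Bigl(\prod_{j=k+1}^{n-1}\frac{1}{1-\alpha_j}\Bigr) \frac{\alpha_k}{1-\alpha_k}.
\end{equation}
The summability $\sum_k \alpha_k < \infty$ forces the infinite product $\prod_k (1-\alpha_k)^{-1}$ to converge, so both terms above are uniformly bounded in $n$. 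This produces a constant $B$ (depending only on $M_0$ and the $\alpha_n$) with $\sup_n M_n \leq B$. Combining with continuity of $x$ on $[0, t_0]$ yields the desired global bound.

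The delicate point is that the estimates from Lemma \ref{control_x} and Corollary \ref{virial_delta} involve $\sup|x|$ over the interval being controlled, so one cannot bound each increment independently of the quantity one is trying to control. The bootstrap is precisely what breaks this circularity, and its success hinges on the geometric smallness $\delta(t_n) \lesssim 2^{-n}$: any weaker decay would make the coefficients $\alpha_n$ non-summable, the infinite product would diverge, and the recursion would no longer yield a finite a priori bound.
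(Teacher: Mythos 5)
Your strategy is sound and is a genuine variant of the paper's argument: both rest on the same two ingredients, Lemma \ref{control_x} and the integral estimate of Corollary \ref{virial_delta} evaluated along the good sequence $\{t_n\}$, but they resolve the self-referential bound differently. The paper argues in one shot: it takes the point $t_n^*$ realizing the running maximum of $|x|$ on $[t_1,t_n]$, applies the two estimates on the single long interval starting at $t_0$, and absorbs the resulting $\tfrac{1}{2}|x(t_n^*)|$ into the left-hand side; for this, only a uniform smallness $\delta(t_k)\leq (4C_0C_1)^{-1}$ is needed. You instead telescope over the subintervals $[t_n,t_{n+1}]$ and close the recursion through the convergence of $\prod_n(1-\alpha_n)^{-1}$, which genuinely requires the summability $\sum_n\alpha_n<\infty$, i.e.\ the geometric decay of $\delta(t_n)$ --- a dependence you correctly identify. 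A side benefit of your version is that you only ever evaluate $\delta$ at the distinguished times $t_n$, where the decay is actually known.

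One step needs repair. You invoke Lemma \ref{control_x} for the pair $(t_n,t)$ with $t\in[t_n,t_{n+1}]$ arbitrary, but the lemma is stated only under the hypothesis $T_1\geq T_0+1$, so it does not apply when $t<t_n+1$. This is not purely cosmetic: outside the modulated regime the increment of $x$ over a short window is controlled only up to an additive $O(1)$ term, and feeding an additive constant into your recursion would make $\sum_k\frac{\alpha_k+C_0}{1-\alpha_k}$ diverge and destroy the a priori bound. The fix is cheap and stays within your scheme: if the supremum of $|x|$ over $[t_n,t_{n+1}]$ is attained at some $t^*<t_n+1$, apply Lemma \ref{control_x} on $[t_{n-1},t^*]$ instead (legitimate since $t^*\geq t_n\geq t_{n-1}+1$) together with Corollary \ref{virial_delta} on $[t_{n-1},t_{n+1}]$; this replaces $\alpha_n$ by a constant multiple of $\alpha_{n-1}+\alpha_n$, which is still summable, and the supremum appearing on the right is over $[t_{n-1},t_{n+1}]\subset[t_0,t_{n+1}]$, so it is still dominated by $M_{n+1}$ and the recursion closes as before (the case $n=0$ being handled by continuity on the compact interval $[t_0,t_1]$). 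With that adjustment your proof is complete.
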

\begin{proof}
Let $t^*_n$ be such that $|x(t^*_n)| = \displaystyle\sup_{t_1 \leq t \leq t_n} |x(t)|$, where $\{t_n\}$ is given in Lemma \ref{virial_delta}. By Lemmas \ref{control_x} and \ref{virial_delta}, we can write
\begin{align}
    |x(t^*_n)| &\leq |x(t_0)| + C_0 C_1 \left[1+\sup_{t_0 \leq t \leq t_1}|x(t)|+|x(t^*_n)|\right]\frac{1}{2C_0C_1}\\
    &\leq |x(t_0)| + \frac{1+\sup_{t_0 \leq t \leq t_1}|x(t)|}{2} + \frac{|x(t^*_n)|}{2}.
\end{align}
\end{proof}

Summing up all of the results of this section, we get the desired bound on $\delta$, which implies Lemma \ref{conv_ground_low} in view of Lemma \ref{const_modul}, up to the scattering for negative time:
\begin{cor}
\begin{equation}
\int_t^{+\infty}\delta(s) \, ds \lesssim e^{-ct}.    
\end{equation}
\end{cor}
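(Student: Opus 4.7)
The plan is to combine the virial estimate of Corollary \ref{virial_delta} with the boundedness of the translation parameter given by Corollary \ref{x_bounded} to derive a Gronwall-type differential inequality for
$$f(t) := \int_t^{+\infty} \delta(s)\,ds.$$
Since Corollary \ref{x_bounded} gives $M := \sup_{t\geq 0}|x(t)| < +\infty$, I would first observe that the estimate \eqref{estimate_virial_delta} in Corollary \ref{virial_delta} upgrades to a bound uniform in the endpoints: for every $0 \leq T_0 \leq T_1$,
$$\int_{T_0}^{T_1}\delta(s)\,ds \leq C_1(1+M)\bigl(\delta(T_0)+\delta(T_1)\bigr).$$

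Next, I would use the sequence $\{t_n\}$ from Corollary \ref{virial_delta} with $\delta(t_n) \to 0$ to conclude that $\int_0^{+\infty}\delta(s)\,ds < +\infty$, simply by taking $T_0 = t_0$ and $T_1 = t_n$ and passing to the limit $n \to \infty$. With this integrability in hand, Lemma \ref{const_modul} applies and provides a ground state $(\phi_0,\psi_0)$ and modulation parameters $\theta_0, x_0$ such that $(u(t),v(t))$ converges in $H^1 \times H^1$ to the corresponding standing wave. A consequence of this convergence is that $\delta(t) \to 0$ as $t \to +\infty$ pointwise, not merely along the subsequence $\{t_n\}$.

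Armed with pointwise decay of $\delta$, I would let $T_1 \to +\infty$ in the uniform estimate above to get
$$f(t) \leq C_1(1+M)\,\delta(t) = -C_1(1+M)\,f'(t),$$
since $f'(t) = -\delta(t)$. This is a linear differential inequality $f'(t) + c\,f(t) \leq 0$ with $c := 1/(C_1(1+M)) > 0$, and Gronwall's inequality yields $f(t) \leq f(0)\,e^{-ct}$, which is precisely the claim.

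The only step that requires real care is passing from subsequential decay $\delta(t_n) \to 0$ to pointwise decay $\delta(t) \to 0$; this is exactly where Lemma \ref{const_modul} is indispensable, as the virial identities by themselves only see the sequence $\{t_n\}$ and the endpoints of integration. Once pointwise decay is in place, the closing Gronwall step is routine, and combining the resulting exponential rate with Lemma \ref{const_modul} upgrades the qualitative convergence to the quantitative estimate $\|(u(t),v(t)) - (e^{i(\theta_0+t)}\phi_0(\cdot+x_0),e^{i(\theta_0+t)}\psi_0(\cdot+x_0))\|_{H^1\times H^1} \lesssim e^{-ct}$ claimed in Lemma \ref{conv_ground_low}.
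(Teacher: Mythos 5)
Your argument is correct and is essentially the paper's proof: combine Corollary \ref{virial_delta} with the uniform bound on $x(t)$ from Corollary \ref{x_bounded} to get $\int_{t}^{T_1}\delta(s)\,ds \lesssim \delta(t)+\delta(T_1)$, pass to the limit in $T_1$, and close with Gronwall. The one inessential detour is your claim that Lemma \ref{const_modul} is ``indispensable'' for upgrading subsequential to pointwise decay of $\delta$: that step can be skipped entirely (and the paper does skip it) by taking $T_1=t_n$ along the sequence from Corollary \ref{virial_delta}, since $\delta(t_n)\to 0$ and monotone convergence of the nonnegative integrand already give $\int_t^{+\infty}\delta(s)\,ds\lesssim\delta(t)$ without knowing $\delta(t)\to 0$ for all $t$.
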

\begin{proof}
By Corollaries \ref{virial_delta} and \ref{x_bounded}, we have, for any $t\geq0$,
\begin{equation}
    \int_{t}^{t_n} \delta(s)\, ds \lesssim \delta(t)+\delta(t_n).
\end{equation}
The result follows by making $n \to +\infty$ and using Gronwall's lemma.
\end{proof}

The scattering part then follows from the following result.

\begin{cor} $(u,v)$ scatters for negative time in $H^1 \times H^1$
\end{cor}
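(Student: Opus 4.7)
The plan is to identify $(u,v)$ (up to symmetries of \eqref{sys_NLS}) with the special solution $Q_0^-$ constructed in Section \ref{Sec6}, after which Theorem \ref{sub_special}(iii) immediately yields the backward scattering. By the preceding corollary together with Lemma \ref{const_modul}, if $(u,v)$ does not scatter forward then there exist $\theta_0\in\Real/2\pi\mathbb{Z}$, $x_0\in\Real^3$, a ground state $Q_0=(\phi_0,\psi_0)\in\mathcal{G}$, and $c>0$ such that
$\|(u(t),v(t))-(e^{i(t+\theta_0)}\phi_0(\cdot+x_0),e^{i(t+\theta_0)}\psi_0(\cdot+x_0))\|_{H^1\times H^1}\lesssim e^{-ct}$.
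After applying the phase and translation symmetries I reduce to $\theta_0=0$, $x_0=0$, and write $(u,v)=e^{it}(Q_0+w)$; the remainder $w$ then solves the linearized equation \eqref{linearized_eq} with $\|w(t)\|_{H^1\times H^1}\lesssim e^{-ct}$.

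Next I would extract the leading amplitude of $w$ along the unstable spectral direction $\mathcal{Y}_+$. Using the bilinear form $B$ from Section \ref{Sec3} (which satisfies $B(\mathcal{Y}_+,\mathcal{Y}_-)\ne 0$, as is needed in the dimensional argument of Lemma \ref{lem_coerc}), set $a(t):=B(w(t),\mathcal{Y}_-)/B(\mathcal{Y}_+,\mathcal{Y}_-)$. Differentiating $a$, using $\mathcal{L}\mathcal{Y}_-=-e_0\mathcal{Y}_-$ and the antisymmetry relation in \eqref{prop_B}, yields $a'(t)+e_0 a(t)=O(\|w(t)\|_{H^1\times H^1}^2)=O(e^{-2ct})$; integrating from $t$ to $+\infty$ (and using $a(t)\to 0$) gives $a(t)=Ae^{-e_0 t}+O(e^{-(e_0+c)t})$ for some $A\in\Real$. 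Combining this with the orthogonality conditions of Proposition \ref{prop_mod_ort} and the coercivity on $\tilde G^\perp$ from Lemma \ref{lem_coerc} refines the bound to $\|w(t)-Ae^{-e_0 t}\mathcal{Y}_+\|_{H^1\times H^1}\lesssim e^{-(e_0+c)t}$.

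With the amplitude $A$ identified, I would bootstrap using the Strichartz estimates of Lemma \ref{prel_estimates} applied on unit-length time intervals (exactly as in the proof of Proposition \ref{sub_exist_UA}) to upgrade the pointwise-in-time $H^1\times H^1$ decay of $w-Ae^{-e_0 t}\mathcal{Y}_+$ into the Strichartz norm $\|\langle\nabla\rangle\cdot\|_{S(L^2,[t,+\infty))\times S(L^2,[t,+\infty))}$. Iteratively comparing $w$ with the approximate solutions $\mathcal{V}_l^A$ of Proposition \ref{family_appr} and absorbing the nonlinear remainder at each step, the exponential rate can be progressively improved until the hypothesis \eqref{sub_bound_UA} of Proposition \ref{sub_exist_UA} holds for some $l\ge l_0$. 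The uniqueness clause of that proposition then forces $(u,v)=U^A$ (up to the symmetries already removed). Finally, since $K(u(t),v(t))<K(\phi,\psi)$ is preserved by the flow (by the set-up at the start of Section \ref{Sec7}, as in the argument opening Section \ref{Sec7}.1) and $Q_0^+$ satisfies $K(Q_0^+(0))>K(Q_0)$, the only possibility is $(u,v)=Q_0^-$ up to symmetries, and Theorem \ref{sub_special}(iii) completes the argument.

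The main technical obstacle I expect is the bootstrap in the third paragraph: the decay rate $c$ coming from the modulational analysis need not coincide with $e_0$ a priori, so the expansion against the approximate solutions $\mathcal{V}_l^A$ must be iterated with care, ensuring at each step that the improved remainder is simultaneously controlled in $H^1\times H^1$ and in the Strichartz-type norm actually required by Proposition \ref{sub_exist_UA}. A secondary issue is verifying that the projection identity for $a(t)$ is compatible with the modulation parameters of Proposition \ref{prop_mod_ort}; this requires first choosing those parameters so that $w$ lies in $\tilde G^\perp$ at each time, which is possible after a further, uniformly bounded modification of $\theta$, $\tilde\theta$, $x$ by quantities of order $e^{-ct}$.
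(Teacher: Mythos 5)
Your argument is circular at the final step. You reduce the claim to the assertion that $Q_0^-$ (equivalently $U^A$ with $A<0$) scatters backward in time, and you cite Theorem \ref{sub_special}(iii) for this. But in the paper the backward scattering stated in Theorem \ref{sub_special}(iii) is itself deduced from Lemma \ref{conv_ground_low}, whose ``moreover, $(u,v)$ scatters in negative time'' clause is exactly the corollary you are being asked to prove. Nothing in the construction of $U^A$ in Section \ref{Sec6} — a fixed point on a half-line $[t_l,+\infty)$ — gives any information about $U^A$ as $t\to-\infty$; the only route to its backward scattering is through the present corollary. So unless you supply an independent proof that $U^A$ with $A<0$ scatters backward, the argument does not close. (The identification $(u,v)=U^A$ in your second and third paragraphs is sound in outline — it is essentially Proposition \ref{prop_unique_decay} of Section \ref{Sec8} — but it is both insufficient and unnecessary here.)

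The paper's proof is a short, self-contained time-reversal rigidity argument using only the machinery already established in this subsection. Global existence follows from the uniform $H^1\times H^1$ bound coming from $K(u(t),v(t))<K(\phi,\psi)$. If $(u,v)$ also failed to scatter backward, then the compactness and virial estimates of this subsection (Lemmas \ref{compactness_lemma}, \ref{control_x}, \ref{bound_delta_below} and Corollaries \ref{virial_delta}, \ref{x_bounded}) apply equally to the time-reversed solution $t\mapsto(\overline u(-t),\overline v(-t))$, yielding $\delta(t)\to 0$ as $t\to\pm\infty$ together with the two-sided bound $\int_{-t}^{t}\delta(s)\,ds\lesssim\delta(t)+\delta(-t)$. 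Letting $t\to+\infty$ forces $\delta\equiv 0$, which contradicts $\delta(0)=K(\phi,\psi)-K(u_0,v_0)>0$. You should adopt this (or an equivalently self-contained) argument rather than routing through Theorem \ref{sub_special}(iii).
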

\begin{proof} The boundedness of the $H^1 \times H^1$ norm implies global existence of the solution. If it does not scatter for negative time, we can apply the results of this section to the time-reversed solution $t\mapsto(\overline u(-t),\overline{v}(-t))$ to conclude that
\begin{equation}
    \lim_{t\to \pm \infty}\delta(t) = 0
\end{equation}
and that, for any $t \in \mathbb R$, 
\begin{equation}
    \int_{-t}^{t}\delta(s)\, ds \lesssim \delta(t)+\delta(-t).
\end{equation}
The last two equations then imply $\delta \equiv 0$.
\end{proof}

\section{Exponentially decaying solutions to the linearized equation}\label{Sec8}

Given any ground state $Q=(\phi,\psi)$, we now study solutions to the equation
\begin{equation}\label{eq_exp_decay}
    \partial_t (h,k) + \mathcal{L}(h,k) = g,
\end{equation}
where 
\begin{align}\label{eq_exp_decay_2}
\|(h(t),k(t))\|_{H^1 \times H^1} \lesssim e^{-\gamma_1 t} \text{ and } \|g(t)\|_{H^1 \times H^1} \lesssim e^{-\gamma_2 t}\text{ , with }\gamma_2>\gamma_1>0.    
\end{align}

We first normalize the functions in the null space of $\mathcal{L}$ by enumerating $\ker(\mathcal{L}) = \{\tilde Q_j\}_{j=1}^{\dim(\ker(\mathcal{L}))}$ and writing for all $j$
\begin{equation}
    Q_j = \frac{1}{\|\tilde{Q}_j\|_{L^2 \times L^2}} \tilde Q_j.
\end{equation}

We also renormalize $\mathcal{Y}_{\pm}$, as to satisfy
\begin{equation}
    B(\mathcal{Y}_+,\mathcal{Y}_-) = 1.
\end{equation}
Write 
\begin{equation}
    (h,k) = \alpha_+(t) \mathcal{Y}_+ +\alpha_-(t) \mathcal{Y}_- + \sum_{j} \beta_j(t) Q_j + w(t),
\end{equation}
with $w(t) \in \tilde{G}^\perp$. Note that
\begin{align}
    \alpha_+(t) = B((h,k),\mathcal{Y}_-)\\
    \alpha_-(t) = B((h,k),\mathcal{Y}_+)\\
    {\beta_j(t) = ((h,k),Q_j)}\\
    \Phi(h,k) = \Phi(w) + \alpha_+ \alpha_-
\end{align}
In particular, $|\alpha_+(t)|+|\alpha_-(t)| +\sum_j |\beta_j(t)| + \|w(t)\|_{H^1 \times H^1} \lesssim e^{-\gamma_1t}$.

\subsection{Differential equations for the modulation parameters}
The following result follows from direct differentiation.
\begin{prop}
We have
\begin{align}
    \frac{d}{dt}(e^{e_0t}\alpha_+) = e^{e_0t}B(g,\mathcal{Y}_-)\\
    \frac{d}{dt}(e^{-e_0t}\alpha_-) = e^{-e_0t}B(g,\mathcal{Y}_+)\\
    {\frac{d}{dt}\beta_j = (g,Q_j)}, \quad\forall\, j\\
    \frac{d}{dt}\Phi(h,k) = 2B(g,(h,k))
\end{align}
In particular,
\begin{align}
    \left|\frac{d}{dt}(e^{e_0t}\alpha_+)\right| \lesssim e^{(e_0-\gamma_2)t}\\
    \left|\frac{d}{dt}(e^{-e_0t}\alpha_-)\right| \lesssim e^{-(e_0+\gamma_2)t}\\
    {\sum_{j}\left|\frac{d}{dt}\beta_j\right| \lesssim e^{-\gamma_2 t}}\\
    \left|\frac{d}{dt}\Phi(h,k)\right|| \lesssim e^{-(\gamma_1+\gamma_2)t}.
\end{align}
\end{prop}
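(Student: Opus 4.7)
The plan is to obtain all four differential identities by straightforward time-differentiation of the coefficients, substituting the equation $\partial_t(h,k) = -\mathcal{L}(h,k) + g$, and then invoking the algebraic properties of $B$ and $\mathcal{L}$ collected in \eqref{prop_B} and Remark \ref{re_spectrum}. First, for $\alpha_+(t) = B((h,k),\mathcal{Y}_-)$ I would compute
\begin{equation*}
\frac{d\alpha_+}{dt} = B(\partial_t(h,k),\mathcal{Y}_-) = -B(\mathcal{L}(h,k),\mathcal{Y}_-) + B(g,\mathcal{Y}_-),
\end{equation*}
and then use skew-symmetry $B(\mathcal{L}f,\tilde f) = -B(f,\mathcal{L}\tilde f)$ together with $\mathcal{L}\mathcal{Y}_- = -e_0\mathcal{Y}_-$ to rewrite the first term on the right as $B((h,k),\mathcal{L}\mathcal{Y}_-) = -e_0\alpha_+$. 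Multiplying the resulting ODE $d\alpha_+/dt + e_0\alpha_+ = B(g,\mathcal{Y}_-)$ by the integrating factor $e^{e_0 t}$ yields the first identity. The computation for $\alpha_-$ is completely symmetric using $\mathcal{L}\mathcal{Y}_+ = e_0\mathcal{Y}_+$, which gives $d\alpha_-/dt - e_0\alpha_- = B(g,\mathcal{Y}_+)$ and hence the second identity after multiplication by $e^{-e_0 t}$.

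For $\beta_j(t) = ((h,k),Q_j)$, differentiation and substitution give $d\beta_j/dt = -(\mathcal{L}(h,k),Q_j) + (g,Q_j)$, and the first term is killed because $Q_j$ spans $\ker(\mathcal{L})$ and the pairing used to define $\beta_j$ is precisely the one compatible with the decomposition $(h,k) = \alpha_+\mathcal{Y}_+ + \alpha_-\mathcal{Y}_- + \sum_j \beta_j Q_j + w$ (so that this cancellation is baked into the construction). For the linearized energy $\Phi((h,k)) = B((h,k),(h,k))$, symmetry of $B$ and a single differentiation yield
\begin{equation*}
\frac{d\Phi}{dt} = 2B(\partial_t(h,k),(h,k)) = -2B(\mathcal{L}(h,k),(h,k)) + 2B(g,(h,k)),
\end{equation*}
and the first term on the right vanishes identically: the skew-symmetry of $\mathcal{L}$ with respect to $B$ combined with the symmetry of $B$ forces $B(\mathcal{L}f,f) = -B(f,\mathcal{L}f) = -B(\mathcal{L}f,f)$, so $B(\mathcal{L}f,f) = 0$ for every $f$.

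With the four identities in hand, the exponential bounds are immediate from the hypotheses \eqref{eq_exp_decay_2}. Since $\mathcal{Y}_\pm \in \mathcal{S}(\Real^3)\times\mathcal{S}(\Real^3)$, the functions $L_R\mathcal{Y}_\pm$ and $L_I\mathcal{Y}_\pm$ are fixed Schwartz functions, so $B(g,\mathcal{Y}_\pm)$ is an $L^2$ pairing of $g$ against a fixed Schwartz function and satisfies $|B(g,\mathcal{Y}_\pm)| \lesssim \|g\|_{L^2\times L^2} \lesssim \|g\|_{H^1\times H^1} \lesssim e^{-\gamma_2 t}$. Likewise $|(g,Q_j)| \lesssim \|g\|_{L^2\times L^2} \lesssim e^{-\gamma_2 t}$, and summation over the finite-dimensional $\ker(\mathcal{L})$ gives the third bound. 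Finally, continuity of $B$ on $H^1\times H^1$ (it involves only second-order differential operators) yields $|B(g,(h,k))| \lesssim \|g\|_{H^1\times H^1}\|(h,k)\|_{H^1\times H^1} \lesssim e^{-(\gamma_1+\gamma_2)t}$. I do not foresee any genuine obstacle here, as the whole proposition is algebraic once one has the spectral picture of $\mathcal{L}$ from Lemma \ref{spectrum_L} and the symmetry relations \eqref{prop_B}; the only point one should double-check is that the pairing used in defining $\beta_j$ truly annihilates $(\mathcal{L}(h,k),Q_j)$, which is guaranteed by the way the decomposition was set up in the previous section.
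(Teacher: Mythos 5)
Your approach coincides with the paper's, whose entire proof is the remark that the result ``follows from direct differentiation'': one differentiates each coefficient, substitutes $\partial_t(h,k)=-\mathcal{L}(h,k)+g$, and invokes \eqref{prop_B} together with $\mathcal{L}\mathcal{Y}_\pm=\pm e_0\mathcal{Y}_\pm$. Your derivations of the identities for $\alpha_\pm$ and for $\Phi(h,k)$ are correct and complete, as is the passage from the four identities to the four exponential bounds.

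The one step you do not actually prove is precisely the one that does not follow from the structure listed in \eqref{prop_B}. You claim that $(\mathcal{L}(h,k),Q_j)_{L^2}=0$ is ``baked into the construction'' because $Q_j$ spans $\ker\mathcal{L}$; but $\mathcal{L}$ is skew-symmetric with respect to $B$, not with respect to the $L^2$ pairing used to define $\beta_j$. In the real $(4\times 4)$ formulation one has $(\mathcal{L}(h,k),Q_j)_{L^2}=((h,k),\mathcal{L}^*Q_j)_{L^2}$ with $\mathcal{L}^*=\begin{pmatrix}0 & L_R\\ -L_I & 0\end{pmatrix}$, and $\mathcal{L}^*$ does \emph{not} annihilate $\ker\mathcal{L}$: for instance $\mathcal{L}^*(i\phi,i\psi)=(L_R(\phi,\psi),0)$ with $L_R(\phi,\psi)=-2\bigl((\phi^2+\beta\psi^2)\phi,(\psi^2+\beta\phi^2)\psi\bigr)\neq 0$ by the ground-state equation, and likewise $L_I(\partial_k\phi,\partial_k\psi)=\bigl(\phi\,\partial_k(\phi^2+\beta\psi^2),\psi\,\partial_k(\psi^2+\beta\phi^2)\bigr)\neq 0$. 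So the vanishing of $(\mathcal{L}(h,k),Q_j)$ is not a formal consequence of $Q_j\in\ker\mathcal{L}$, and the extra term $((h,k),\mathcal{L}^*Q_j)$ is a priori only $O(e^{-\gamma_1 t})$, which would be too weak for the self-improvement of the decay of $\beta_j$ carried out afterwards. This term must either be shown to vanish by a separate argument, or the projections onto the null directions must be taken against $\ker\mathcal{L}^*$ (equivalently, set up through $B$-duality) so that the adjoint kills them identically. You correctly flagged this as the point to double-check, but the justification you offer for it is not valid; filling it is the only genuine work required beyond the paper's one-line proof.
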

\subsection{Self-improving decay}
The decay of $g$ can be used to improve the decay of $(h,k)$.
\begin{lemma} 
\begin{align}
    |\alpha_-(t)| \lesssim e^{-\gamma_2 t}\\
    \sum_{j}|\beta_j(t)| \lesssim e^{-\gamma_2 t}\\
    \|w(t)\|_{H^1 \times H^1} \lesssim e^{-\frac{\gamma_1+\gamma_2}{2}}.
\end{align}
\begin{proof}
The bounds follow from direct integration. 
\end{proof}
\end{lemma}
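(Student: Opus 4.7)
The plan is to obtain each of the three bounds by integrating the differential equations established in the previous proposition from time $t$ up to $+\infty$, exploiting the fact that all the parameters are known \emph{a priori} to decay at least like $e^{-\gamma_1 t}$, so the boundary terms at infinity vanish.

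First I would handle $\alpha_-$. The identity $\frac{d}{dt}(e^{-e_0 t}\alpha_-) = e^{-e_0 t}B(g,\mathcal{Y}_+)$ together with $|e^{-e_0 t}\alpha_-(t)| \lesssim e^{-(e_0+\gamma_1)t}\to 0$ lets me write
\[
e^{-e_0 t}\alpha_-(t) = -\int_t^{+\infty} e^{-e_0 s}B(g(s),\mathcal{Y}_+)\, ds,
\]
and bounding the integrand by $e^{-(e_0+\gamma_2)s}$ yields $|\alpha_-(t)| \lesssim e^{-\gamma_2 t}$. The same recipe applied to $\frac{d}{dt}\beta_j = (g,Q_j)$ gives
\[
\beta_j(t) = -\int_t^{+\infty}(g(s),Q_j)\, ds,
\]
hence $\sum_j |\beta_j(t)| \lesssim e^{-\gamma_2 t}$.

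For $w$, I would use the coercivity of $\Phi$ on $\tilde G^\perp$ (Lemma \ref{lem_coerc}) to reduce the bound on $\|w\|_{H^1 \times H^1}$ to a bound on $\Phi(w)$. Since $\Phi(h,k)=\Phi(w)+\alpha_+\alpha_-$, I have $\Phi(w) = \Phi(h,k) - \alpha_+\alpha_-$. From $\frac{d}{dt}\Phi(h,k) = 2B(g,(h,k))$ and $|\Phi(h,k)| \lesssim \|(h,k)\|_{H^1\times H^1}^2 \lesssim e^{-2\gamma_1 t} \to 0$, integrating from $t$ to $+\infty$ gives
\[
|\Phi(h(t),k(t))| \lesssim \int_t^{+\infty} e^{-(\gamma_1+\gamma_2)s}\, ds \lesssim e^{-(\gamma_1+\gamma_2)t}.
\]
Combining with the already-obtained bound $|\alpha_+(t)\alpha_-(t)|\lesssim e^{-(\gamma_1+\gamma_2)t}$ yields $|\Phi(w(t))|\lesssim e^{-(\gamma_1+\gamma_2)t}$, and the coercivity estimate then gives $\|w(t)\|_{H^1\times H^1}\lesssim e^{-\frac{\gamma_1+\gamma_2}{2}t}$.

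There is no real obstacle here beyond bookkeeping: the only point worth flagging is that one must check in each case that the quantity being integrated vanishes at $+\infty$, which is precisely where the a priori decay $e^{-\gamma_1 t}$ of $(h,k)$ enters. Note that, unlike the other parameters, $\alpha_+$ cannot be improved by this argument, because the relevant ODE $\frac{d}{dt}(e^{e_0 t}\alpha_+) = e^{e_0 t}B(g,\mathcal{Y}_-)$ has a \emph{growing} integrating factor, so the boundary term at infinity does not vanish; this asymmetry between the stable and unstable directions is exactly why only $\alpha_-$, $\beta_j$ and $w$ are improved.
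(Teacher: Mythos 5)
Your argument is correct and is exactly the ``direct integration'' the paper alludes to: integrate the ODEs for $e^{-e_0t}\alpha_-$ and $\beta_j$ from $t$ to $+\infty$ (the boundary terms vanish by the a priori $e^{-\gamma_1 t}$ decay), and control $\Phi(w)=\Phi(h,k)-\alpha_+\alpha_-$ by integrating $\frac{d}{dt}\Phi(h,k)=2B(g,(h,k))$, then invoke coercivity of $\Phi$ on $\tilde G^\perp$. Your closing remark about why the unstable direction $\alpha_+$ is not improved by this mechanism matches the paper's structure, which treats $\alpha_+$ in the subsequent lemma.
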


\begin{lemma}
There exists $A \in \mathbb{R}$ such that 
\begin{equation}
    |\alpha_+(t)-Ae^{-e_0t}| \lesssim e^{-\gamma_2^- t}.
\end{equation}
\end{lemma}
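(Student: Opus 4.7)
The plan is to integrate the scalar first-order ODE
$\frac{d}{dt}\bigl(e^{e_0 t}\alpha_+(t)\bigr) = e^{e_0 t}B(g(t),\mathcal{Y}_-)$
established in the previous proposition. Since $|B(g(t),\mathcal{Y}_-)|\lesssim\|g(t)\|_{H^1\times H^1}\lesssim e^{-\gamma_2 t}$, the source on the right-hand side has size $O(e^{(e_0-\gamma_2)t})$. The choice of $A$ and the decay rate of the remainder hinge on whether this source is integrable on $[0,+\infty)$, which happens precisely when $\gamma_2>e_0$, so I split into two regimes.

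In the non-resonant integrable case $\gamma_2>e_0$, I define
$$A:=\alpha_+(0)+\int_0^{+\infty}e^{e_0 s}B(g(s),\mathcal{Y}_-)\,ds,$$
which exists and equals the limit of $e^{e_0 t}\alpha_+(t)$ as $t\to+\infty$. Writing the difference as a tail integral, $e^{e_0 t}\alpha_+(t)-A=-\int_t^{+\infty}e^{e_0 s}B(g(s),\mathcal{Y}_-)\,ds$, and plugging in the pointwise bound gives $|e^{e_0 t}\alpha_+(t)-A|\lesssim e^{(e_0-\gamma_2)t}$, which after dividing by $e^{e_0 t}$ yields $|\alpha_+(t)-Ae^{-e_0 t}|\lesssim e^{-\gamma_2 t}$, slightly better than the claimed $e^{-\gamma_2^- t}$.

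In the residual case $\gamma_2\leq e_0$, the limit above need not exist, so I take $A=0$ and bound $e^{e_0 t}\alpha_+(t)$ by integrating the ODE from $0$ to $t$: when $\gamma_2<e_0$ this yields $|e^{e_0 t}\alpha_+(t)|\lesssim e^{(e_0-\gamma_2)t}$, hence $|\alpha_+(t)|\lesssim e^{-\gamma_2 t}$; when $\gamma_2=e_0$ the integral contributes a linear factor, giving $|\alpha_+(t)|\lesssim (1+t)e^{-e_0 t}$, which is $\lesssim e^{-\gamma t}$ for any $\gamma<\gamma_2$ after absorbing the polynomial loss into a slightly smaller exponent. This boundary case is precisely the reason the statement uses $\gamma_2^-$ rather than $\gamma_2$.

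The only real subtlety is recognizing $A$ as the limit $\lim_{t\to+\infty}e^{e_0 t}\alpha_+(t)$ in the first regime; everything else is variation of constants combined with an elementary Duhamel tail estimate.
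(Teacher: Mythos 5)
Your proof is correct and follows essentially the same route as the paper: integrate $\frac{d}{dt}(e^{e_0t}\alpha_+)=e^{e_0t}B(g,\mathcal{Y}_-)$, take $A$ to be the limit of $e^{e_0t}\alpha_+(t)$ when $\gamma_2>e_0$, and set $A=0$ with a direct bound when $\gamma_2\leq e_0$. Your treatment of the boundary case $\gamma_2=e_0$ is in fact slightly more explicit than the paper's about where the $\gamma_2^-$ loss comes from.
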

\begin{proof}
Write
\begin{equation}
    e^{e_0s}\alpha_+(s)-e^{e_0t} \alpha_+(t) = \int_t^s e^{e_0 \tau} B(g(\tau),\mathcal{Y}_-) \, d \tau.
\end{equation}
If $\gamma_2>e_0$, then the last integral converges, wich means $e^{e_0s}\alpha_+(s)$ has a limit $A$ as $s \to +\infty$. That implies
\begin{equation}
     |\alpha_+(t)-Ae^{-e_0t}| \lesssim e^{-e_0t}\int_t^{+\infty} e^{(e_0-\gamma_2) \tau} \, d \tau \lesssim e^{-\gamma_2t}.
\end{equation}
If $\gamma_2 \leq e_0$, then
\begin{equation}
    |e^{e_0t}\alpha_+(t)|  \lesssim |\alpha_+(0)| + e^{(e_0-\gamma_2^-)t},
\end{equation}
which means we can choose $A = 0$.
\end{proof}

Define now $(\tilde h, \tilde k) = (h,k) - Ae^{-e_0t}\mathcal{Y}_+$ and $\tilde{\alpha}_+ = \alpha_+ - Ae^{-e_0t}$. Note that $(\tilde h, \tilde k)$ satisfies the same differential equation as $(h,k)$. We then  have
\begin{equation}
    \|(\tilde h(t), \tilde k(t))\|_{H^1 \times H^1} \lesssim |\tilde{\alpha}_+(t)| + |\alpha_-(t)| + \sum_{j}|\beta_j(t)| + \|w(t)\|_{H^1 \times H^1} \lesssim e^{-\gamma_2^- t} + e^{-\frac{\gamma_1+\gamma_2}{2}} \lesssim e^{-\frac{\gamma_1+\gamma_2}{2}}.
\end{equation}
In view of the improved decay of $(\tilde h, \tilde k)$, we can replace $\gamma_1$ by $\frac{\gamma_1+\gamma_2}{2}$, and by repeated iteration, we conclude:
\begin{lemma}\label{lemma_improv_decay} If $(u,v)$ satisfies \eqref{eq_exp_decay} and \eqref{eq_exp_decay_2}, then there exists $A \in \mathbb R$ such that
\begin{equation}
    (h(t),k(t)) = Ae^{-e_0t}\mathcal{Y}_+ + O(e^{-\gamma_2^-t}) \quad \text{in }H^1 \times H^1.
\end{equation}
\end{lemma}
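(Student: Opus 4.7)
\emph{Proof proposal.} The plan is to bootstrap the decay of $(h,k)$ using the two preceding lemmas. Let $A\in\mathbb R$ be as provided by the preceding lemma, so that $\alpha_+(t) = Ae^{-e_0 t} + O(e^{-\gamma_2^- t})$, and set
\begin{equation*}
    (\tilde h(t),\tilde k(t)) := (h(t),k(t)) - Ae^{-e_0 t}\mathcal{Y}_+.
\end{equation*}
Since $\mathcal{Y}_+$ is an eigenfunction of $\mathcal{L}$ with eigenvalue $e_0$, the subtracted term solves the homogeneous equation $\partial_t(\cdot)+\mathcal L(\cdot) = 0$, so $(\tilde h,\tilde k)$ still satisfies \eqref{eq_exp_decay} with the same forcing $g$. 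Its $\mathcal Y_+$-coefficient is now $\tilde\alpha_+ = \alpha_+ - Ae^{-e_0 t}$ with $|\tilde\alpha_+(t)|\lesssim e^{-\gamma_2^-t}$, while $\alpha_-$, $\beta_j$, and $w$ are controlled by the previous lemma. Combining these gives
\begin{equation*}
    \|(\tilde h(t),\tilde k(t))\|_{H^1\times H^1} \;\lesssim\; e^{-\gamma_1^{(1)}t}, \qquad \gamma_1^{(1)} := \min\Bigl(\gamma_2^-,\tfrac{\gamma_1+\gamma_2}{2}\Bigr).
\end{equation*}

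Next I would iterate the whole procedure, substituting $(\tilde h,\tilde k)$ for $(h,k)$ and $\gamma_1^{(1)}$ for $\gamma_1$, while keeping the same $g$ and $\gamma_2$. The crucial point is that no further multiple of $e^{-e_0 t}\mathcal Y_+$ is generated by subsequent iterations: in the case $\gamma_2>e_0$, the new constant would be $\lim_{s\to\infty}e^{e_0 s}\tilde\alpha_+(s)$, which vanishes by construction after the first subtraction; in the case $\gamma_2\leq e_0$ the preceding lemma already chose $A=0$. Thus each further step only improves the decay of the remainder. Since the recursion $\gamma_1^{(n+1)} = (\gamma_1^{(n)}+\gamma_2)/2$ increases monotonically toward $\gamma_2$, after finitely many iterations $\gamma_1^{(n)}$ exceeds any prescribed $\gamma_2^-<\gamma_2$, yielding the stated bound on $(h,k)-Ae^{-e_0t}\mathcal Y_+$.

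The main obstacle is the bookkeeping in the iteration: one must verify that at every stage the triple $((\tilde h,\tilde k),g,\gamma_1^{(n)})$ falls under the hypotheses of the preceding two lemmas, and that the choice of $A$ is made only once — at the first step — so the iteration improves the remainder without ever re-injecting an $e^{-e_0 t}\mathcal Y_+$ component. Once this structural fact is secured, the convergence $\gamma_1^{(n)}\nearrow\gamma_2$ is elementary and the number of iterations needed to reach a given $\gamma_2^-$ is finite, keeping the implicit constants uniform.
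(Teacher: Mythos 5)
Your proposal is correct and follows essentially the same route as the paper: subtract $Ae^{-e_0t}\mathcal{Y}_+$ once (noting it solves the homogeneous equation), observe the remainder satisfies the same equation with the improved rate $\tfrac{\gamma_1+\gamma_2}{2}$ by the self-improving decay estimates, and iterate finitely many times since the recursion $\gamma_1\mapsto\tfrac{\gamma_1+\gamma_2}{2}$ increases to $\gamma_2$. Your explicit check that subsequent iterations produce $A^{(n)}=0$, so no new $e^{-e_0t}\mathcal{Y}_+$ component is re-injected, is a point the paper leaves implicit but is exactly the right bookkeeping.
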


\subsection{Uniqueness}
\begin{prop}\label{prop_unique_decay}
Let $(u,v)$ be a solution to \eqref{sys_NLS} such that $\|(u(t),v(t)-(e^{it}\phi,e^{it}\psi)\|_{H^1 \times H^1} \lesssim e^{-ct}$ for some ground state $Q=(\phi,\psi)$ and some $c>0$. Then there exists $A \in \mathbb R$ such that $(u,v) = U^A$.
\end{prop}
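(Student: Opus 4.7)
My plan is to show that $(u,v)$ satisfies the defining bound \eqref{sub_bound_UA} of $U^A$ for some $A \in \mathbb R$ and some large index $l \geq l_0$; the uniqueness clause of Proposition \ref{sub_exist_UA} will then force $(u,v) = U^A$. I begin by writing $(u,v) = e^{it}(Q + w)$, so that $w$ solves the linearized equation $\partial_t w + \mathcal{L} w = iR(w)$ with $\|w(t)\|_{H^1\times H^1} \lesssim e^{-ct}$ by hypothesis.

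First I would bootstrap the decay of $w$. Since $R$ is at least quadratic in $w$ with coefficients controlled by $(\phi,\psi)$ and its derivatives, $\|R(w)(t)\|_{H^1 \times H^1} \lesssim e^{-2ct}$, so Lemma \ref{lemma_improv_decay} applied with $(\gamma_1,\gamma_2) = (c, 2c)$ yields $A \in \mathbb R$ such that $\|w(t) - A e^{-e_0 t}\mathcal{Y}_+\|_{H^1\times H^1} \lesssim e^{-(2c)^- t}$. Iterating this step a finite number of times (each iteration roughly doubling the rate) lets us upgrade the hypothesis so that $c > e_0$, at which point $A$ is determined unambiguously as $A = \lim_{t\to +\infty} e^{e_0 t}B(w(t),\mathcal{Y}_-)$.

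Next I would peel off the approximate solutions $\mathcal{V}_l^A$ of Proposition \ref{family_appr}. Setting $\tilde w_l := w - \mathcal{V}_l^A$, it satisfies
\[
\partial_t \tilde w_l + \mathcal{L}\tilde w_l = i\bigl(R(w) - R(\mathcal{V}_l^A)\bigr) + O(e^{-(l+1)e_0 t})
\]
in $H^1\times H^1$, the nonlinear difference being bounded by $\lesssim (\|\mathcal{V}_l^A\|_{H^1\times H^1} + \|\tilde w_l\|_{H^1\times H^1})\|\tilde w_l\|_{H^1\times H^1}$. Inductively on $l$, Lemma \ref{lemma_improv_decay} upgrades the decay of $\tilde w_l$ so that $\|w - \mathcal{V}_l^A\|_{H^1\times H^1} \lesssim e^{-\mu_l t}$ with $\mu_l \to \infty$ as $l \to \infty$. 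The crucial point is that the coefficient $A_l'$ of the unstable direction extracted at each application of Lemma \ref{lemma_improv_decay} must vanish: since $\mathcal{V}_l^A = Ae^{-e_0 t}\mathcal{Y}_+ + O(e^{-2 e_0 t})$ by construction, $B(\mathcal{V}_l^A,\mathcal{Y}_-) = Ae^{-e_0 t} + O(e^{-2e_0 t})$ cancels the leading term of $B(w,\mathcal{Y}_-)$ obtained in the first step, leaving $B(\tilde w_l,\mathcal{Y}_-) = o(e^{-e_0 t})$ and hence $A_l' = 0$.

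Finally, I would pick $l \geq l_0$ large enough that $\mu_l > (l + \tfrac12)e_0$, and convert the pointwise-in-time $H^1 \times H^1$ bound on $w - \mathcal{V}_l^A$ into the Strichartz norm on $[t,+\infty)$ by splitting into unit time intervals and invoking Lemma \ref{prel_estimates}. This produces \eqref{sub_bound_UA} for $(u,v)$ with the parameter $A$ fixed in Step 1, so the uniqueness clause in Proposition \ref{sub_exist_UA} yields $(u,v) = U^A$. The main obstacle I anticipate is verifying $A_l' = 0$ cleanly at each iteration; carrying this out rigorously requires using the explicit form of $Z_j^A = -(\mathcal{L} - j e_0)^{-1}F_j$ and the cancellations encoded in the construction of $\mathcal{V}_l^A$, so that the pairings with $\mathcal{Y}_-$ match to the required order.
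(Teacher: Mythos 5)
Your proposal is correct and shares the paper's overall architecture (bootstrap the decay with Lemma \ref{lemma_improv_decay}, extract $A$, push the decay of the difference high enough, and close with the uniqueness clause of Proposition \ref{sub_exist_UA}), but it differs in the intermediate comparison object, and this difference matters for the step you flag as the ``main obstacle.'' You subtract the approximate solution $\mathcal{V}_l^A$ of Proposition \ref{family_appr}, so your difference $\tilde w_l$ solves an equation carrying the error $\epsilon_l = O(e^{-(l+1)e_0 t})$, and you worry about verifying $A_l'=0$ at each iteration via the explicit structure of the $Z_j^A$. The paper instead subtracts the \emph{exact} special solution, setting $z=(h,k)-(h^A,k^A)$ where $U^A = (e^{it}(\phi+h^A),e^{it}(\psi+k^A))$: then $z$ solves $\partial_t z+\mathcal{L}z=R(h,k)-R(h^A,k^A)$ with a purely quadratic source and no $\epsilon_l$, the bound \eqref{sub_bound_UA2} gives $z=O(e^{-2e_0^- t})$ at the outset, and the vanishing of the unstable coefficient at every stage is automatic --- Lemma \ref{lemma_improv_decay} returns $z=A'e^{-e_0t}\mathcal{Y}_++O(e^{-\gamma_2^- t})$ and $A'=\lim_{t\to\infty}e^{e_0t}B(z,\mathcal{Y}_-)=0$ simply because $z$ already decays faster than $e^{-e_0t}$. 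No information about the $Z_j^A$ or the cancellations inside $\mathcal{V}_l^A$ is needed. Note that the same observation rescues your own route: at each of your iterations $\tilde w_l$ enters the lemma already decaying faster than $e^{-e_0t}$, which forces $A_l'=0$ without appealing to the resolvent formula for $Z_{l+1}$; also, for fixed $l$ your rate $\mu_l$ saturates at $(l+1)e_0^-$ because of $\epsilon_l$, which is still above the threshold $(l+\tfrac12)e_0$ you need. So your argument goes through, but the paper's choice of comparison object eliminates the bookkeeping you anticipated.
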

\begin{proof}
Write $(u,v) = (e^{it}(\phi + h), e^{it}(\psi+k))$ and let $A \in \mathbb{R}$ be given by Lemma \ref{lemma_improv_decay}. Recall that $(h,k)$ satisfies
\begin{equation}
    \partial_t (h,k) + \mathcal{L}(h,k) = R(h,k),
\end{equation}
where 
\begin{equation}
    \|R(h,k)\|_{H^1 \times H^1} \lesssim \|(h,k)\|_{H^1 \times H^1}^2.
\end{equation}
Therefore, by Lemma \ref{lemma_improv_decay}, we have that $\|(h(t),k(t))\|_{H^1 \times H^1} \lesssim e^{-e_0t}$ and $\|R(h(t),k(t))\|_{H^1 \times H^1} \lesssim e^{-2e_0t}$. This lets us bootstrap the decay to conclude
\begin{equation}\label{bound_A_uniq}
    \|(h(t),k(t))-Ae^{-e_0t}\mathcal{Y}_+\|_{H^1 \times H^1} \lesssim e^{-2e_0^-t}.
\end{equation}

Consider the corresponding $U^A$ and write $U^A = (e^{it}(\phi + h^A), e^{it}(\psi+k^A))$. Recall that
\begin{equation}
    \|(h^A(t),k^A(t)) - Ae^{-e_0t}\mathcal{Y}_+\|_{H^1 \times H^1} \lesssim e^{-2e_0t}.
\end{equation}
Thus,
\begin{equation}
    \|(h(t),k(t)) - (h^A(t),k^A(t))\|_{H^1 \times H^1} \lesssim e^{-2e_0^-t}.
\end{equation}
We now note that $z := (h,k)-(h^A,k^A)$ satisfies
\begin{equation}
    \partial_t z + \mathcal{L}z = R(h,k)-R(h^A,k^A)
\end{equation}
and that $$\|R(h,k)-R(h^A,k^A)\|_{H^1 \times H^1} \lesssim (\|(h,k)\|_{H^1 \times H^1}+\|(h^A,k^A)\|_{H^1 \times H^1})\|(h,k)-(h^A,k^A)\|_{H^1 \times H^1}.$$
We conclude that, for any $\gamma>0$,
\begin{equation}
    \|(h,k)-(h^A,k^A)\|_{H^1 \times H^1}\lesssim_{\gamma}e^{-\gamma t}.
\end{equation}
In particular, choosing $\gamma>(l_0+1)e_0$, (where $l_0$ is defined in Section \ref{def_l_0}),{ and using Strichartz,} the uniqueness of $U^A$ implies $h = h^A$, i.e., $(u,v) = U^A$.
\end{proof}
The above result also allows us to narrow the number of solutions even further down. 
\begin{cor} Let $A \in \mathbb R$ and $Q = (\phi,\psi) \in \mathcal{G}$. We define $Q^+ = U^{+1}$ and $Q^- = U^{-1}$. Then, if $A>0$, there exists a time $T_A \in \mathbb R$ such that $U^A = e^{iT_A}Q^+(t-T_A)$. Conversely, if $A<0$, there exists a time $T_A \in \mathbb R$ such that $U^A = e^{iT_A}Q^-(t-T_A)$.
\end{cor}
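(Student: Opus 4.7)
The plan is to check that, for $A>0$, the candidate $\tilde U(t):=e^{iT_A}Q^+(t-T_A)$ with $T_A:=\tfrac{\ln A}{e_0}$ is again a solution of \eqref{sys_NLS} (by the invariance of the system under time translation and constant phase rotation), to verify that it satisfies the same leading-order asymptotic expansion as $U^A$ near $t=+\infty$, and to invoke the uniqueness already proved in Proposition \ref{prop_unique_decay} to identify the two.

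For the matching, apply estimate \eqref{sub_bound_UA2} to $Q^+=U^{+1}$ at time $s=t-T_A$ (which is large whenever $t$ is large) and multiply by $e^{iT_A}$ to obtain, for $t$ sufficiently large,
\[
\|\tilde U(t)-e^{it}Q - e^{e_0 T_A}e^{-e_0t+it}\mathcal{Y}_+\|_{H^1\times H^1}\leq e^{2e_0T_A}e^{-2e_0t}.
\]
The key point is that the outer phase $e^{iT_A}$ exactly cancels the $e^{-iT_A}$ arising from rewriting $e^{i(t-T_A)}$, leaving the factor $e^{e_0 T_A}=A$ on the leading term. Since $\tilde U(t)-e^{it}Q$ thus decays like $e^{-e_0 t}$ in $H^1\times H^1$, Proposition \ref{prop_unique_decay} applies and yields $\tilde U = U^{A'}$ for some $A'\in\mathbb R$. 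Comparing the preceding estimate with \eqref{sub_bound_UA2} for $U^{A'}$ gives
\[
\|(A-A')e^{-e_0 t}\mathcal{Y}_+\|_{H^1\times H^1}\lesssim e^{-2e_0 t},
\]
and since $\mathcal{Y}_+\neq 0$ this forces $A=A'$, i.e., $\tilde U=U^A$, which is the desired identity.

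For $A<0$ the same argument applies with $T_A:=\tfrac{\ln|A|}{e_0}$ and $Q^-=U^{-1}$: the leading-order asymptotic of $Q^-$ is $-e^{-e_0 t+it}\mathcal{Y}_+$, and the combined time-translation/phase-rotation turns it into $-|A|e^{-e_0 t+it}\mathcal{Y}_+=Ae^{-e_0 t+it}\mathcal{Y}_+$, after which the uniqueness step closes the proof in exactly the same way. The only subtlety worth highlighting is the careful bookkeeping of phases in the matching step so that the factor in front of $\mathcal{Y}_+$ is precisely $A$; once this is handled, the content of the corollary is essentially algebraic, the non-trivial analytic work being entirely absorbed into Proposition \ref{prop_unique_decay}.
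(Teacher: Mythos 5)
Your proof is correct and follows essentially the same route as the paper: both arguments reduce the corollary to matching the leading asymptotic term $Ae^{-e_0t+it}\mathcal{Y}_+$ after a time translation and phase rotation, and then invoke the uniqueness of $U^A$ (Proposition \ref{prop_unique_decay} / Proposition \ref{sub_exist_UA}); the only cosmetic difference is that you transform $Q^{\pm}$ to match $U^A$, whereas the paper transforms $U^A$ to match $U^{\pm1}$. Your bookkeeping of $T_A=\tfrac{\ln A}{e_0}$ is consistent with the corollary as stated.
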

\begin{proof}
Indeed, given $A>0$, define $T_A = - \frac{1}{e_0}\ln A$. Then
\begin{equation}
    e^{-iT_A}U^A(t+T_A) = e^{it}(Q + \mathcal{Y}_+e^{-e_0t})+O(e^{-2e_0t}).
\end{equation}

By using the same argument after \eqref{bound_A_uniq}, we conclude that $e^{-iT_A}U^A(t+T_A) = U^{+1}$. The proof for $A<0$ is completely analogous.
\end{proof}

We then sum the previous results up to obtain the main theorems of this paper:

\begin{proof}[Proof of Theorem \ref{sub_special}]
The existence part follows from Proposition \ref{sub_exist_UA}. The finite-variance and the blow-up part for $Q^+$ come from Lemma \ref{conv_ground_high} and the scattering backwards in time for $Q^-$ comes from Lemma \ref{conv_ground_low}.
\end{proof}
\begin{proof}[Proof of Theorem \ref{sub_class_thresh}] It is a direct consequence of Lemma \ref{conv_ground_high}, in the case $\mathcal{MK}(u_0,v_0)>1$, of the variational characterization of the ground states in the case $\mathcal{MK}(u_0,v_0)=1$ and of Lemma \ref{conv_ground_low} in the case $\mathcal{MK}(u_0)<1$.
\end{proof}




\addtocontents{toc}{\protect\vspace*{\baselineskip}}




\begin{bibdiv}
\begin{biblist}

\bib{agrawalbook}{book}{
      author={Agrawal, Govind~P.},
       title={Nonlinear fiber optics},
   publisher={Academic Press, Third Edition},
        date={2001},
}

\bib{BLP81}{article}{
      author={Berestycki, H.},
      author={Lions, P.-L.},
      author={Peletier, L.~A.},
       title={An {ODE} approach to the existence of positive solutions for
  semilinear problems in {${\mathbf R}\sp{N}$}},
        date={1981},
        ISSN={0022-2518},
     journal={Indiana Univ. Math. J.},
      volume={30},
      number={1},
       pages={141\ndash 157},
         url={https://doi.org/10.1512/iumj.1981.30.30012},
}

\bib{boydbook}{book}{
      author={Boyd, Robert~W.},
       title={Nonlinear optics},
     edition={Third},
   publisher={Elsevier/Academic Press, Amsterdam},
        date={2008},
        ISBN={978-0-12-369470-6},
      review={\MR{2475397}},
}

\bib{CFR_thre}{article}{
      author={Campos, Luccas},
      author={Farah, Luiz~Gustavo},
      author={Roudenko, Svetlana},
       title={Threshold solutions for the nonlinear {S}chr{\"o}dinger
  equation},
        date={2022},
     journal={Revista Matem{\'a}tica Iberoamericana},
}

\bib{Cazenave03}{book}{
      author={Cazenave, Thierry},
       title={Semilinear {S}chr\"{o}dinger equations},
      series={Courant Lecture Notes in Mathematics},
   publisher={New York University, Courant Institute of Mathematical Sciences,
  New York; American Mathematical Society, Providence, RI},
        date={2003},
      volume={10},
        ISBN={0-8218-3399-5},
      review={\MR{2002047}},
}

\bib{Correia16}{article}{
      author={Correia, Sim\~{a}o},
       title={Ground-states for systems of {$M$} coupled semilinear
  {S}chr\"{o}dinger equations with attraction-repulsion effects:
  characterization and perturbation results},
        date={2016},
        ISSN={0362-546X},
     journal={Nonlinear Anal.},
      volume={140},
       pages={112\ndash 129},
         url={https://doi.org/10.1016/j.na.2016.03.006},
      review={\MR{3492731}},
}

\bib{STH16}{article}{
      author={Correia, Sim\~{a}o},
      author={Oliveira, Filipe},
      author={Tavares, Hugo},
       title={Semitrivial vs. fully nontrivial ground states in cooperative
  cubic {S}chr\"{o}dinger systems with {$d\geq 3$} equations},
        date={2016},
        ISSN={0022-1236},
     journal={J. Funct. Anal.},
      volume={271},
      number={8},
       pages={2247\ndash 2273},
         url={https://doi.org/10.1016/j.jfa.2016.06.017},
      review={\MR{3539352}},
}

\bib{DM_Dyn}{article}{
      author={Duyckaerts, Thomas},
      author={Merle, Frank},
       title={Dynamic of threshold solutions for energy-critical {NLS}},
        date={2009},
        ISSN={1016-443X},
     journal={Geom. Funct. Anal.},
      volume={18},
      number={6},
       pages={1787\ndash 1840},
      review={\MR{2491692}},
}

\bib{DR_Thre}{article}{
      author={Duyckaerts, Thomas},
      author={Roudenko, Svetlana},
       title={Threshold solutions for the focusing 3{D} cubic {S}chr\"{o}dinger
  equation},
        date={2010},
        ISSN={0213-2230},
     journal={Rev. Mat. Iberoam.},
      volume={26},
      number={1},
       pages={1\ndash 56},
      review={\MR{2662148}},
}

\bib{FanelliMontefusco}{article}{
      author={Fanelli, Luca},
      author={Montefusco, Eugenio},
       title={On the blow-up threshold for weakly coupled nonlinear
  {S}chr\"{o}dinger equations},
        date={2007},
        ISSN={1751-8113},
     journal={J. Phys. A},
      volume={40},
      number={47},
       pages={14139\ndash 14150},
         url={https://doi.org/10.1088/1751-8113/40/47/007},
      review={\MR{2438116}},
}

\bib{FP}{article}{
      author={Farah, Luiz~Gustavo},
      author={Pastor, Ademir},
       title={Scattering for a 3{D} coupled nonlinear {S}chr\"{o}dinger
  system},
        date={2017},
        ISSN={0022-2488},
     journal={J. Math. Phys.},
      volume={58},
      number={7},
       pages={071502, 33},
         url={https://doi.org/10.1063/1.4993224},
      review={\MR{3671163}},
}

\bib{HK}{article}{
      author={Hmidi, Taoufik},
      author={Keraani, Sahbi},
       title={Blowup theory for the critical nonlinear {S}chr\"{o}dinger
  equations revisited},
        date={2005},
        ISSN={1073-7928},
     journal={Int. Math. Res. Not.},
      number={46},
       pages={2815\ndash 2828},
         url={https://doi.org/10.1155/IMRN.2005.2815},
      review={\MR{2180464}},
}

\bib{KV_Clay}{incollection}{
      author={Killip, Rowan},
      author={Vi\c{s}an, Monica},
       title={Nonlinear {S}chr\"{o}dinger equations at critical regularity},
        date={2013},
   booktitle={Evolution equations},
      series={Clay Math. Proc.},
      volume={17},
   publisher={Amer. Math. Soc., Providence, RI},
       pages={325\ndash 437},
         url={https://doi.org/10.1007/s00208-013-0960-z},
      review={\MR{3098643}},
}

\bib{Kwong89}{article}{
      author={Kwong, Man~Kam},
       title={Uniqueness of positive solutions of {$\Delta u-u+u^p=0$} in
  {${\bf R}^n$}},
        date={1989},
        ISSN={0003-9527},
     journal={Arch. Rational Mech. Anal.},
      volume={105},
      number={3},
       pages={243\ndash 266},
      review={\MR{969899}},
}

\bib{higher_thre}{article}{
      author={Li, Dong},
      author={Zhang, Xiaoyi},
       title={Dynamics for the energy critical nonlinear {S}chr\"{o}dinger
  equation in high dimensions},
        date={2009},
        ISSN={0022-1236},
     journal={J. Funct. Anal.},
      volume={256},
      number={6},
       pages={1928\ndash 1961},
      review={\MR{2498565}},
}

\bib{LiPo15}{book}{
      author={Linares, Felipe},
      author={Ponce, Gustavo},
       title={Introduction to nonlinear dispersive equations},
     edition={Second Edition},
      series={Universitext},
   publisher={Springer, New York},
        date={2015},
        ISBN={978-1-4939-2180-5; 978-1-4939-2181-2},
      review={\MR{3308874}},
}

\bib{Maia}{article}{
      author={Maia, L.~A.},
      author={Montefusco, E.},
      author={Pellacci, B.},
       title={Positive solutions for a weakly coupled nonlinear
  {S}chr\"{o}dinger system},
        date={2006},
        ISSN={0022-0396},
     journal={J. Differential Equations},
      volume={229},
      number={2},
       pages={743\ndash 767},
         url={https://doi.org/10.1016/j.jde.2006.07.002},
      review={\MR{2263573}},
}

\bib{Mande115}{article}{
      author={Mandel, Rainer},
       title={Minimal energy solutions for cooperative nonlinear
  {S}chr\"{o}dinger systems},
        date={2015},
        ISSN={1021-9722},
     journal={NoDEA Nonlinear Differential Equations Appl.},
      volume={22},
      number={2},
       pages={239\ndash 262},
         url={https://doi.org/10.1007/s00030-014-0281-2},
      review={\MR{3338435}},
}

\bib{Pastor}{article}{
      author={Pastor, Ademir},
       title={Weak concentration and wave operator for a 3{D} coupled nonlinear
  {S}chr\"{o}dinger system},
        date={2015},
        ISSN={0022-2488},
     journal={J. Math. Phys.},
      volume={56},
      number={2},
       pages={021507, 18},
         url={https://doi.org/10.1063/1.4908555},
      review={\MR{3390859}},
}

\bib{Strauss77}{article}{
      author={Strauss, Walter~A.},
       title={Existence of solitary waves in higher dimensions},
        date={1977},
        ISSN={0010-3616},
     journal={Comm. Math. Phys.},
      volume={55},
      number={2},
       pages={149\ndash 162},
      review={\MR{0454365}},
}

\bib{TaoBook}{book}{
      author={Tao, Terence},
       title={Nonlinear dispersive equations: local and global analysis},
   publisher={American Mathematical Soc.},
        date={2006},
      number={106},
}

\bib{WY_Uniq}{article}{
      author={Wei, Juncheng},
      author={Yao, Wei},
       title={Uniqueness of positive solutions to some coupled nonlinear
  schr{\"o}dinger equations},
        date={2012},
     journal={Communications on Pure \& Applied Analysis},
      volume={11},
      number={3},
       pages={1003},
}

\bib{W85_stability}{article}{
      author={Weinstein, Michael~I.},
       title={Modulational stability of ground states of nonlinear
  {S}chr\"{o}dinger equations},
        date={1985},
        ISSN={0036-1410},
     journal={SIAM J. Math. Anal.},
      volume={16},
      number={3},
       pages={472\ndash 491},
         url={https://doi.org/10.1137/0516034},
      review={\MR{783974}},
}

\end{biblist}
\end{bibdiv}


\newcommand{\Addresses}{{
  \bigskip
  \footnotesize

  L. Campos, \textsc{IMECC-UNICAMP, Rua Sérgio Buarque de Holanda, 651, Cidade Universitária, 13083-859, Campinas, São Paulo, Brazil.}\par\nopagebreak
  \textit{E-mail address:} \texttt{luccasccampos@gmail.com}
  
  \vspace{3mm}
   A. Pastor, \textsc{IMECC-UNICAMP, Rua Sérgio Buarque de Holanda, 651, Cidade Universitária, 13083-859, Campinas, São Paulo, Brazil.}\par\nopagebreak
  \textit{E-mail address:} \texttt{apastor@ime.unicamp.br}

}}
\setlength{\parskip}{0pt}
\Addresses
\batchmode
\end{document}